\documentclass[11pt]{article}

\evensidemargin0cm \oddsidemargin0cm \textwidth16cm
\textheight23cm \topmargin-2cm

\usepackage{amsmath}
\usepackage{amsthm}
\usepackage{amsfonts}
\usepackage{bbm}
\usepackage{color}
\usepackage[cp1251]{inputenc}
\usepackage[ukrainian,english]{babel}

\newcommand{\me}{\mathbb{E}}
\newcommand{\E}{\mathbb{E}}
\newcommand{\mr}{\mathbb{R}}
\newcommand{\R}{\mathbb{R}}
\newcommand{\eee}{{\rm e}}
\newcommand{\ind}{\mathbbm{1}}
\newcommand{\dd}{{\rm d}}

\newcommand{\mn}{\mathbb{N}}
\newcommand{\N}{\mathbb{N}}
\newcommand{\mmp}{\mathbb{P}}

\newcommand{\rr}{\rm r}
\DeclareMathOperator{\1}{\mathbbm{1}}

\newtheorem{thm}{Theorem}[section]
\newtheorem{lemma}[thm]{Lemma}

\newtheorem{cor}[thm]{Corollary}

\newtheorem{assertion}[thm]{Proposition}
\theoremstyle{definition}

\theoremstyle{remark}
\newtheorem{rem}[thm]{Remark}

\begin{document}

\title{Small counts in nested Karlin's occupancy scheme generated by discrete Weibull-like distributions\footnote{Dedicated to the heroic Ukrainian people. \foreignlanguage{ukrainian}{Слава Україні!}}
}\date{}
\author{Alexander Iksanov\footnote{Faculty of Computer Science and Cybernetics, Taras Shevchenko National University of Kyiv, Ukraine; e-mail address: iksan@univ.kiev.ua} \ \  and \ \ Valeriya Kotelnikova\footnote{Faculty of Computer Science and Cybernetics, Taras Shevchenko National University of Kyiv, Ukraine; e-mail address: valeria.kotelnikova@unicyb.kiev.ua}}

\maketitle

\begin{abstract}
A nested Karlin's occupancy scheme is a symbiosis of classical Karlin's balls-in-boxes scheme and a weighted branching process. To define it, imagine
a deterministic weighted branching process in which weights of the first generation individuals are given by the elements of a  discrete probability distribution. For each positive integer $j$, identify the $j$th generation individuals with the $j$th generation boxes. The collection of balls is one and the same for all generations, and each ball starts at the root of the weighted branching process tree and moves along the tree according to the following rule: transition from a mother box to a daughter box occurs with probability given by the ratio of the daughter and mother weights.

Assume that there are $n$ balls and that the discrete probability distribution responsible for the first generation is Weibull-like. Denote by $\mathcal{K}_n^{(j)}(l)$ and $\mathcal{K}_n^{*(j)}(l)$ the number of the $j$th generation boxes which contain at least $l$ balls and exactly $l$ balls, respectively. We prove functional limit theorems (FLTs) for the matrix-valued processes $\big(\mathcal{K}_{[{\rm e}^{T+\cdot}]}^{(j)}(l)\big)_{j,l\in\mathbb{N}}$ and $\big(\mathcal{ K}_{[{\rm e}^{T+\cdot}]}^{*(j)}(l)\big)_{j,l\in\mathbb{N}}$, properly normalized and centered, as $T\to \infty$. The present FLTs are an extension of a FLT proved by Iksanov, Kabluchko and Kotelnikova (2022) for the vector-valued process $\big(\mathcal{K}_{[{\rm e}^{T+\cdot}]}^{(j)}(1)\big)_{j\in\mathbb{N}}$. While the rows of each of the limit matrix-valued processes are independent and identically distributed, the entries within each row are stationary Gaussian processes with explicitly given covariances and cross-covariances.
We provide an integral representation for each row.
The results obtained are new even for Karlin's occupancy scheme.
\end{abstract}

\noindent Key words: de Haan's class $\Pi$; functional limit theorem; infinite occupancy; nested hierarchy; random environment; stationary Gaussian process

\noindent 2020 Mathematics Subject Classification: Primary:
60F17, 60J80
\hphantom{2020 Mathematics Subject Classification: } Secondary: 60G15

\section{Introduction}

\subsection{Definition of the model}
Let $(p_k)_{k\in\mathbb{N}}$ be a discrete probability distribution, that is, $p_k\geq 0$, $k\in\mn:=\{1,2,\ldots\}$ and $\sum_{k\geq 1}p_k=1$. In addition, we assume that $p_k>0$ for infinitely many $k$. In a classical infinite occupancy scheme balls are allocated independently over an infinite array of boxes $1$, $2,\ldots$ with probability $p_k$ of hitting box $k$. Let $n,l\in\mn$, $l\leq n$. Quantities of traditional interest are $\mathcal{K}^\ast_n(l)$ the number of boxes occupied by exactly $l$ out of $n$ balls and $\mathcal{K}_n=\sum_{l=1}^n \mathcal{K}^\ast_n(l)$ the number of boxes occupied by at least one of $n$ balls. In specific applications, boxes correspond to distinguishable species or types. With this interpretation, $\mathcal{K}^\ast_n(l)$ is the number of species represented by $l$ elements of a sample of size $n$ from a population with infinitely many species, and   $\mathcal{K}_n$ is the number of distinct species represented in that sample. Sometimes the variables $\mathcal{K}^\ast_n(l)$ are called {\it small counts}.

Following early investigations in \cite{Bahadur:1960, Darling:1967}, Karlin in \cite{Karlin:1967} undertook a first systematic treatment of the scheme and unveiled its many secrets. Because of his seminal contribution the scheme is often called the {\it Karlin occupancy scheme}. A survey on the Karlin occupancy scheme can be found in \cite{Gnedin+Hansen+Pitman:2007}. An incomplete list of more recent publications includes \cite{Ben-Hamou+Boucheron+Ohannessian:2017, Bogachev+Gnedin+Yakubovich:2008, Durieu+Samorodnitsky+Wang:2020} and the articles cited in Section \ref{survey}.

In \cite{Iksanov+Kabluchko+Kotelnikova:2021}, a {\it nested Karlin's occupancy scheme} was introduced. This is a hierarchical generalization of the Karlin scheme which is defined by settling, in a consistent way, the sequence of Karlin's occupancy schemes on the tree of a deterministic weighted branching process. To recall the construction,
denote by $\mathcal{R}:=\cup_{n\in\mn_0}\mn^n$, where $\mn_0:=\mn\cup\{0\}$ and $\mn^0:=\{\oslash\}$, the set of all possible individuals of some population, encoded with the Ulam-Harris notation. The ancestor is identified with the empty word $\oslash$, and its weight is $p_\oslash=1$. An individual ${\rr}=r_1\ldots r_j\in\mn^j$ of the $j$th generation
whose weight is denoted by $p_{\rr}$ produces an infinite number of offspring
residing in the $(j+1)$th generation. The offspring of the individual $\rr$ are enumerated by ${\rr}i =r_1\ldots r_j i$, where $i\in \mn$, and the weights of the offspring are denoted by $p_{\rr i}$. It is postulated that $p_{\rr i}:=p_{\rr}p_i$. Note that, for each $j\in\mn$, $\sum_{|{\rr}|=j}p_{\rr}=1$, where, by convention, $|{\rr}|=j$ means that the sum is taken over all individuals of the $j$th generation.

We are now prepared to define the nested Karlin occupancy scheme. For each $j \in \mn$, the set of the $j$th generation boxes is identified with $\{{\rr}\in \mathcal{R}: |{\rr}|=j\}$ the set of individuals in the $j$th generation, and the weight of box $\rr$ is given by $p_{\rr}$. The collection of balls is the same for all generations. The balls are allocated as follows. At each time $n\in \mn$, a new ball arrives and falls, independently on the $n-1$ previous balls, into the box ${\rr}$ of the first generation with probability $p_{\rr}$. Simultaneously, it falls into the box ${\rr}i_1$ of the second generation with probability $p_{{\rr}i_1}/p_{\rr}$, into the box ${\rr}i_1i_2$ of the third generation with probability $p_{{\rr}i_1i_2}/p_{{\rr} i_1}$, and so on indefinitely. A box is deemed {\it occupied} at time $n$ provided it was hit by a ball on its way over the generations. Observe that restricting attention to one arbitrary generation $j$, say one obtains the Karlin occupancy scheme with probabilities $(p_{\rr})_{|\rr|=j}$.

The object introduced above may be called a {\it deterministic version} of the nested Karlin scheme. Let $(\pi(t))_{t \geq 0}$ be a Poisson process of unit intensity with the arrival times $S_1$, $S_2,\ldots$, so that
\begin{equation*}
\pi(t)=\#\{k\in\N: S_k\leq t\},\quad t\geq 0.
\end{equation*}
It is assumed that the Poisson process is independent of the sampling. As in much of the previous research on occupancy models, we shall also work with a {\it Poissonized version} in which balls arrive at random times $S_1$, $S_2,\ldots$ rather than $1,2,\ldots$. A key observation is that the number of balls in box ${\rr}$ of the Poissonized nested Karlin occupancy scheme at time $t$ is given by a Poisson random variable of mean $tp_{\rr}$ which is independent of the number of balls in all boxes, other than the predecessor (mother, grandmother, etc.) boxes of ${\rr}$.

For $n,j,l\in\mn$, denote by $\mathcal{K}_n^{(j)}(l)$ and $\mathcal{K}_n^{*(j)}(l)$ the number of the $j$th generation boxes which were hit by at least $l$ balls and exactly $l$ balls, respectively, up to and including time $n$ in the deterministic version.
Put
\begin{equation*}
K_t^{(j)}(l):=\mathcal{K}_{\pi(t)}^{(j)}(l)\quad\text{and}\quad K_t^{\ast(j)}(l):=\mathcal{K}_{\pi(t)}^{\ast (j)}(l),\quad t\geq 0,~~j,l\in\mn,
\end{equation*}
so that $K_t^{(j)}(l)$ and $K_t^{\ast(j)}(l)$ represent the numbers of the $j$th generation boxes which were hit by at least $l$ balls and exactly $l$ balls, respectively, up to and including time $t$ in the Poissonized version. Observe that $\mathcal{K}_n^{(j)}(1)$ and $K_t^{(j)}(1)$ are the numbers of occupied boxes in the $j$th generation at time $n$ in the deterministic version and at time $t$ in the Poissonized version, respectively.

Here is another interpretation of the deterministic version in terms of the ranges, and their components, of nested samples. Let $(\xi_{i,l})_{i,l\in\mn}$ be independent random variables with distribution $(p_k)_{k\in\mn}$. For each $j,n\in\mn$, put $$\xi_n^{(j)}:=((\xi_{1,1},\xi_{1,2},\ldots\xi_{1,j}), (\xi_{2,1},\xi_{2,2},\ldots\xi_{2,j}),\ldots, (\xi_{n,1},\xi_{n,2},\ldots\xi_{n,j})),$$ that is, $\xi_n^{(j)}$ is a sample in $\mn^j$ of size $n$ from distribution $(p_{\rr})_{|{\rr}|=j}$. Furthermore, the samples are nested in the sense that, for $j_1<j_2$, $\xi_n^{(j_1)}$ is the restriction of $\xi_n^{(j_2)}$ to the first $j_1$ coordinates. In the original setting the quantity $\xi_{i,l}$ can be thought of as the index of the $l$th generation box hit by the $i$th ball, the index being restricted to offspring of the box $(\xi_{i,1}, \xi_{i,2},\ldots,\xi_{i,l-1})$ (the ancestor $\oslash$ if $l=1$). Denote by $R_n^{(j)}$ the range of $\xi_n^{(j)}$, that is, the number of distinct values assumed by the sample $\xi_n^{(j)}$. Also, let $R_n^{\ast(j)}(l)$ denote the number of values that the elements of $\xi_n^{(j)}$ take $l$ times. Note that $R_n^{(j)}=\sum_{l=1}^n R_n^{\ast(j)}(l)$. Then the vectors $$((R_n^{\ast(1)}(1),R_n^{\ast(1)}(2),\ldots, R_n^{\ast(1)}(n)), (R_n^{\ast(2)}(1), R_n^{\ast(2)}(2),\ldots, R_n^{\ast(2)}(n)),\ldots)$$ and $$((\mathcal{K}_n^{*(1)}(1),\mathcal{K}_n^{*(1)}(2),\ldots, \mathcal{K}_n^{*(1)}(n)), (\mathcal{K}_n^{*(2)}(1),\mathcal{K}_n^{*(2)}(2),\ldots, \mathcal{K}_n^{*(2)}(n)),\ldots)$$ have the same distribution. In particular, $(R_n^{(1)}, R_n^{(2)},\ldots)$ has the same distribution as \newline $(\mathcal{K}_n^{(1)}(1), \mathcal{K}_n^{(2)}(1),\ldots)$.

\subsection{Main results}

\subsubsection{Limit theorems}

Denote by $\rho(x)$ the counting function of the sequence $(1/p_k)_{k\in\mn}$, that is, $$\rho(t):=\#\{k\in\mn: p_k\geq 1/t\},\quad t>0.$$ Observe that $\rho(t)=0$ for $t\in (0, 1)$.

We write $\Rightarrow$ to denote weak convergence in a function space, ${\overset{{\rm d}}\longrightarrow}$ and ${\overset{{\rm f.d.d.}}\longrightarrow}$ to denote weak convergence of one-dimensional and finite-dimensional distributions, respectively. Denote by $D:=D(\mr)$ and $D^{\mn\times\mn}$ the Skorokhod space of c\`{a}dl\`{a}g functions defined on $\mr$ and the space of infinite matrices with $D$-valued elements, respectively. By weak convergence in $D^{\mn\times\mn}$ is meant weak convergence in the space $D^{m\times n}$ of $m\times n$ matrices with $D$-valued elements, for all $m,n\in\mn$. The empty sums like $\sum_{k=i}^{j}$, for $i>j$, are interpreted as $0$. As usual, $x_+:=\max (x,0)$ for $x\in\mr$, and $\Gamma$ denotes Euler's gamma function. Recall that the covariance of random variables $X$ and $Y$ with finite second moments is defined by ${\rm Cov}\,(X,Y)=\me XY-\me X\me Y$.

Our main results are functional limit theorems as $T\to\infty$ for the matrix-valued processes
$((K^{(j)}_{\eee^{T+u}}(l))_{u\in\mr})_{j,l\in\mn}$ (Theorem \ref{main}),
$((\mathcal{K}^{(j)}_{\lfloor \eee^{T+u}\rfloor }(l))_{u\in\mr})_{j,l\in\mn}$ (Corollary \ref{main_poiss}), $((K^{\ast (j)}_{\eee^{T+u}}(l))_{u\in\mr})_{j,l\in\mn}$ (Corollary \ref{X}) and $((\mathcal{K}^{\ast (j)}_{\lfloor \eee^{T+u}\rfloor }(l))_{u\in\mr})_{j,l\in\mn}$ (Corollary \ref{X}), centered with their means and properly normalized, as $T\to\infty$. Observe that each of the aforementioned processes is a random $D^{\mn\times\mn}$-valued element.

\begin{thm}\label{main}
Assume that, for all $\lambda>0$, some $\beta\geq 0$ and some $\ell$ slowly varying at $\infty$,
\begin{equation}\label{eq:dehaan}
\lim_{t\to\infty}\frac{\rho(\lambda t)-\rho(t)}{(\log t)^\beta\ell(\log t)}=\log\lambda.
\end{equation}
If $\beta=0$ we further assume that $\ell$ is eventually nondecreasing and unbounded.
Then
\begin{equation} \label{flc}
\Big(\Big(\frac{K^{(j)}_{\eee^{T+u}}(l)-\me K^{(j)}_{\eee^{T+u}}(l)}{(c_jf_j(T))^{1/2}}\Big)_{u\in\mr}\Big)_{j,l\in\mn}~\Rightarrow~((Z_l^{(j)}(u))_{u\in\mr})_{j,l\in\mn},\quad T\to\infty
\end{equation}
in the product $J_1$-topology on $D^{\mn\times\mn}$. Here, for $j\in\mn$ and large $t$,
\begin{equation}\label{eq:cj}
c_j:=\frac{(\Gamma(\beta+1))^j}{\Gamma(j(\beta+1))},\quad f_j(t):=t^{j\beta+j-1}(\ell(t))^j.
\end{equation}
The processes $(Z_l^{(1)})_{l\in\mn}$, $(Z_l^{(2)})_{l\in\mn},\ldots$ are independent copies of $(Z_l)_{l\in\mn}$, where, for $l\in\mn$, $Z_l:=(Z_l(u))_{u\in\mr}$ is a centered stationary Gaussian process with covariance
\begin{equation}\label{eq:covZ}
{\rm Cov}\,(Z_l(u), Z_l(v))=\log(1+\eee^{-|u-v|})-\sum_{k=1}^{l-1} \frac{(2k-1)! \eee^{-|u-v|k}}{(k!)^2 (1+\eee^{-|u-v|})^{2k}},\quad u,v\in\mr.
\end{equation}
The cross-covariances of $(Z_l)_{l\in\mn}$ take the following form: for $l\in\mn$, $n\in\mn_0$, $u,v\in\mr$,
\begin{multline}\label{eq:cross1Z}
\me Z_l(u)Z_{l+n}(v)= \log(1+\eee^{-|u-v|})-\sum_{k=1}^{l-1} \frac{(2k-1)! \eee^{-|u-v|k}}{(k!)^2 (1+\eee^{-|u-v|})^{2k}}
\\+\sum_{r=0}^{n-1}\sum_{i=0}^{l-1}\Big(\frac{1}{l+r} \binom{l+r}{i} \big((1-\eee^{u-v})_+\big)^{l+r-i}\eee^{(u-v)i}-\frac{1}{l+r+i} \binom{l+r+i}{i}\frac{\eee^{(u-v)i}}{(1+\eee^{u-v})^{l+r+i}}\Big).
\end{multline}
\end{thm}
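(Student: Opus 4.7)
The plan is to extend the methodology of Iksanov--Kabluchko--Kotelnikova (2022), which treated the case $l=1$, to the joint matrix-valued convergence for all $l\in\mn$. I work first with the Poissonized process $(K^{(j)}_t(l))$; the statement for $\mathcal{K}^{(j)}_{\lfloor \eee^{T+u}\rfloor}(l)$ in deterministic time then follows by standard de-Poissonization exploiting the concentration of $\pi(t)$ around $t$.

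The central reduction is via Poisson thinning: for each fixed $j$,
\begin{equation*}
K_t^{(j)}(l) = \sum_{|\rr|=j} \mathbbm{1}\{\pi_{\rr}(tp_{\rr})\geq l\},
\end{equation*}
where $\{\pi_{\rr} : |\rr|=j\}$ is a family of independent unit-intensity Poisson processes. This exhibits $K_t^{(j)}(l)$ as a sum of independent-across-$\rr$ indicators coupled across $l$ and across $u$ only through the same $\pi_{\rr}$. Write $\rho^{(j)}(t) := \#\{|\rr|=j : p_{\rr} \geq 1/t\}$ for the generation-$j$ counting function; from the $l=1$ paper, hypothesis \eqref{eq:dehaan} lifts to the analogous $\Pi$-class asymptotic $\rho^{(j)}(\lambda t) - \rho^{(j)}(t) \sim c_j f_j(\log t) \log \lambda$ as $t\to\infty$, which is precisely what yields the normalization $(c_j f_j(T))^{1/2}$.

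The proof then proceeds in three ingredients. For \emph{finite-dimensional convergence} at a fixed $j$, the Cramér--Wold device reduces any finite collection of $(l_i, u_i)$ to a scalar CLT for a triangular array of independent bounded indicators, in which Lindeberg's condition is immediate. The asymptotic covariance is computed by rewriting sums as Stieltjes integrals against $\rho^{(j)}$, substituting $y = \log(1/p_{\rr}) - T$, and invoking the $\Pi$-property; the resulting integrals are then identified with the closed forms \eqref{eq:covZ} and \eqref{eq:cross1Z} by direct manipulation of joint Poisson probabilities $\mmp(\pi(t)\geq l,\, \pi(s)\geq l')$. For joint convergence across distinct generations, one further needs
\begin{equation*}
{\rm Cov}\big(K^{(j)}_{\eee^{T+u}}(l),\, K^{(j')}_{\eee^{T+v}}(l')\big) = o\big((c_j f_j(T)\,c_{j'} f_{j'}(T))^{1/2}\big),\qquad j\neq j';
\end{equation*}
since for $|\rr|=j$ and its generation-$j'$ descendant $\rr\cdot\rr''$ the process $\pi_{\rr\cdot\rr''}$ is a thinning of $\pi_{\rr}$ with ratio $p_{\rr''}$, this cross-covariance reduces to a multinomial-splitting integral, bounded using the strictly different polynomial orders of $f_j(T)$ across $j$---extending the $l=1$ analysis. \emph{Tightness} on $D$ follows from the standard moment criterion together with the monotonicity of $t \mapsto K_t^{(j)}(l)$ and a bound $\me(Y_T^{(j)}(l, u') - Y_T^{(j)}(l, u))^2 \leq C |u'-u|$ on bounded intervals, itself a direct consequence of the covariance asymptotics. \emph{Identification} of the Gaussian limit is then immediate from matched covariances.

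The principal technical obstacle is the asymptotic cross-generational independence: tracking the recursive thinning structure through the tree to show that the coupling between $K^{(j)}$ and $K^{(j')}$ is of strictly smaller order than the product of their normalized standard deviations. A secondary laborious component is the closed-form identification of the limit covariances \eqref{eq:covZ}-\eqref{eq:cross1Z}, which requires computing integrals of joint Poisson tail probabilities and producing the coefficients $(2k-1)!/(k!)^2$ as well as the $(1-\eee^{u-v})_+$-type structure in the cross-covariance formula.
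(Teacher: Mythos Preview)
Your overall architecture matches the paper's: Poisson thinning to independent indicators, Cram\'er--Wold plus Lindeberg--Feller for finite-dimensional distributions, the covariance asymptotics of Propositions~\ref{cov:fixedlevel} and~\ref{cross} to identify the limit, and Proposition~\ref{independ:ln} for the cross-generational decoupling. The grouping of all generations by the first-generation box $\rr_1$ in Step~2, so that the array is genuinely independent across $\rr_1$, is implicit in your description and is indeed what the paper does.

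The real gap is tightness. Your claim that ``monotonicity of $t\mapsto K_t^{(j)}(l)$ together with $\me\big(Y_T(u')-Y_T(u)\big)^2\leq C|u'-u|$'' suffices is not correct as stated: the second-moment increment bound with exponent $1$ is exactly the borderline case that does \emph{not} give tightness in $D$ (Billingsley's Theorem~13.5 needs exponent strictly larger than $1$), and monotonicity of the uncentered process does not survive centering in a way that closes the gap. The paper instead proves the fourth-moment criterion
\[
\me\big(\mathbf{K}_l^{(j)}(T,v)-\mathbf{K}_l^{(j)}(T,u)\big)^2\big(\mathbf{K}_l^{(j)}(T,w)-\mathbf{K}_l^{(j)}(T,v)\big)^2\leq C(w-u)^2,
\]
by expanding into the centered Bernoulli variables $\widetilde L_{\rr,l}$, $\widetilde M_{\rr,l}$ and doing casework on which indices coincide. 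The key input making this work is Lemma~\ref{lem:density}, a pointwise density estimate $\mmp\{s+u<G_l\leq s+v\}\leq C_l(v-u)\eee^{-|s|}$ for the log-Erlang variable $G_l$; without the exponential decay in $|s|$ you cannot sum over boxes and land on $(c_jf_j(T))^2$. You should replace your tightness sketch with this fourth-moment route, or else supply an explicit argument showing how monotonicity plus the linear second-moment bound actually yields the Billingsley criterion.

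A smaller point: for the joint Lindeberg condition across generations (your ``immediate''), the paper needs more than bounded indicators once $j\geq 2$, since the summands $\sum_{|\rr_2|=j-1}\tilde B_{\rr_1\rr_2,l}$ are unbounded. It passes through a fourth-moment bound \eqref{2jk0eq}, which in turn uses $\me K^{(j-1)}_{\eee^T}(1)=o(f_j(T))$; this is where the assumption $\ell(T)\to\infty$ when $\beta=0$ is actually consumed.
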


For $j,l\in\mn$ and large $T$, with $c_j$ and $f_j$ as defined in \eqref{eq:cj}, put
$${\bf K}_l^{(j)}(T,u):=\frac{K^{(j)}_{\eee^{T+u}}(l)-\me K^{(j)}_{\eee^{T+u}}(l)}{(c_jf_j(T))^{1/2}},\quad u\in\mr,\quad {\bf K}_l^{(j)}(T):=({\bf K}_l^{(j)}(T,u))_{u\in\mr},$$
$${\bf \mathcal{K}}_l^{(j)}(T,u):=\frac{\mathcal{K}^{(j)}_{\lfloor \eee^{T+u}\rfloor }(l)-\me \mathcal{K}^{(j)}_{\lfloor \eee^{T+u}\rfloor}(l)}{(c_jf_j(T))^{1/2}},\quad u\in\mr,\quad {\bf \mathcal{K}}_l^{(j)}(T):=({\bf \mathcal{K}}_l^{(j)}(T,u))_{u\in\mr}$$ and then
\begin{equation}\label{eq:K*}
	{\bf K}_l^{*(j)}(T,u):={\bf K}_l^{(j)}(T,u)-	{\bf K}_{l+1}^{(j)}(T,u)=\frac{K^{*(j)}_{\eee^{T+u}}(l)-\me K^{*(j)}_{\eee^{T+u}}(l)}{(c_jf_j(T))^{1/2}},\quad u\in\mr,
\end{equation}
\begin{equation*}\label{eq:cal*}
	{\bf \mathcal{K}}_l^{*(j)}(T,u):={\mathcal K}_l^{(j)}(T,u)-	{\mathcal K}_{l+1}^{(j)}(T,u)=\frac{\mathcal{K}^{*(j)}_{\lfloor \eee^{T+u}\rfloor }(l)-\me \mathcal{K}^{*(j)}_{\lfloor \eee^{T+u}\rfloor}(l)}{(c_jf_j(T))^{1/2}},\quad u\in\mr,
\end{equation*}
${\bf K}_l^{*(j)}(T):=({\bf K}_l^{*(j)}(T,u))_{u\in\mr}$ and ${\bf \mathcal{K}}_l^{*(j)}(T):=({\bf \mathcal{K}}_l^{*(j)}(T,u))_{u\in\mr}$.
\begin{cor}\label{main_poiss}
	Under the assumptions of Theorem \ref{main},
	\begin{equation*}
		\big({\bf \mathcal{K}}_l^{(j)}(T)\big)_{j,l\in\mn}~\Rightarrow~(Z_l^{(j)})_{j,l\in\mn},\quad T\to\infty
	\end{equation*}
	in the product $J_1$-topology on $D^{\mn\times\mn}$.
\end{cor}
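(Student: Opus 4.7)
The plan is to deduce Corollary \ref{main_poiss} from Theorem \ref{main} via a random time change that links the deterministic and Poissonized versions. Writing $S_1,S_2,\ldots$ for the jump times of $(\pi(t))_{t\geq0}$, the pathwise identity $\pi(S_n)=n$ gives
\[
\mathcal{K}^{(j)}_{\lfloor \eee^{T+u}\rfloor}(l)=K^{(j)}_{S_{\lfloor \eee^{T+u}\rfloor}}(l)=K^{(j)}_{\exp(T+V_T(u))}(l),\qquad V_T(u):=\log S_{\lfloor \eee^{T+u}\rfloor}-T.
\]
The strong law of large numbers for $(S_n)$, combined with the monotonicity of both $V_T$ and the identity map, forces $\sup_{|u|\leq A}|V_T(u)-u|\to 0$ almost surely for every $A>0$; in fact the law of the iterated logarithm yields the sharper bound $V_T(u)-u=O(\eee^{-(T+u)/2}\sqrt{\log\log \eee^{T+u}})$ uniformly on compacts.

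Adding and subtracting $\me K^{(j)}_{\exp(T+V_T(u))}(l)$ produces the decomposition
\[
{\bf \mathcal{K}}^{(j)}_l(T,u)={\bf K}^{(j)}_l(T,V_T(u))+r_T(u;j,l),\qquad r_T(u;j,l):=\frac{\me K^{(j)}_{\exp(T+V_T(u))}(l)-\me\mathcal{K}^{(j)}_{\lfloor \eee^{T+u}\rfloor}(l)}{(c_jf_j(T))^{1/2}}.
\]
Setting $g_{j,l}(n):=\me\mathcal{K}^{(j)}_n(l)$ and using independence of the Poisson process and the sampling, the numerator of $r_T$ equals $\me g_{j,l}(\pi(\eee^{T+V_T(u)}))-g_{j,l}(\lfloor \eee^{T+u}\rfloor)$. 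The asymptotic analysis underlying Theorem \ref{main} (cf.\ \cite{Iksanov+Kabluchko+Kotelnikova:2021}) shows that $g_{j,l}$, viewed as a function of $\log n$, is regularly varying of index $j(\beta+1)-1$ with slowly varying component $(\ell(\log n))^j$. Consequently the three contributions---Poisson smoothing $\me g_{j,l}(\pi(s))-g_{j,l}(s)$, controlled by second-order Taylor expansion together with $\mathrm{Var}(\pi(s))=s$; the fractional-part shift $g_{j,l}(s)-g_{j,l}(\lfloor s\rfloor)$; and the random shift $g_{j,l}(\lfloor \eee^{T+V_T(u)}\rfloor)-g_{j,l}(\lfloor \eee^{T+u}\rfloor)$---are each of order $o((c_jf_j(T))^{1/2})$ uniformly on $[-A,A]$. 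Hence $\sup_{|u|\leq A}|r_T(u;j,l)|\to 0$ in probability.

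The limit fields $Z_l^{(j)}$ are centered stationary Gaussian with covariance \eqref{eq:covZ} continuous in $u-v$, so they admit continuous versions. Since $V_T\to\mathrm{id}$ in probability uniformly on compacts, and since convergence in probability to a deterministic limit promotes marginal to joint weak convergence, Theorem \ref{main} delivers
\[
\bigl(V_T,({\bf K}^{(j)}_l(T))_{j,l\in\mn}\bigr)\Rightarrow\bigl(\mathrm{id},(Z_l^{(j)})_{j,l\in\mn}\bigr).
\]
The composition map $(\varphi,f)\mapsto f\circ\varphi$ is continuous at $(\mathrm{id},f)$ whenever $f$ is continuous, so the continuous mapping theorem yields ${\bf K}^{(j)}_l(T,V_T(\cdot))\Rightarrow Z_l^{(j)}(\cdot)$ jointly in $(j,l)$, in the product $J_1$-topology on $D^{\mn\times\mn}$. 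Combined with the vanishing of $r_T$, this proves the corollary. The principal obstacle is the control of $r_T$; however, the required mean asymptotics for $g_{j,l}$ are already in place from the proof of Theorem \ref{main}, which reduces this step to routine bookkeeping.
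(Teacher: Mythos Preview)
Your overall architecture is exactly the paper's: write $\mathcal{K}^{(j)}_{\lfloor \eee^{T+u}\rfloor}(l)=K^{(j)}_{\exp(T+V_T(u))}(l)$ with $V_T(u)=\log S_{\lfloor\eee^{T+u}\rfloor}-T$, use SLLN to get $V_T\to\mathrm{id}$ uniformly on compacts, promote Theorem~\ref{main} to joint convergence with $V_T$, apply the composition lemma (continuity at continuous limits), and then repair the centering. The paper does precisely this, with $V_T$ written as ${\bf S}(T,\cdot)$.

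Where you and the paper diverge is in the treatment of the centering remainder $r_T$. The paper splits
\[
\Phi_l^{(j)}(S_{\lfloor tv\rfloor})-\me\mathcal{K}^{(j)}_{\lfloor tv\rfloor}(l)
=\bigl(\Phi_l^{(j)}(S_{\lfloor tv\rfloor})-\Phi_l^{(j)}(tv)\bigr)+\bigl(\Phi_l^{(j)}(tv)-\me\mathcal{K}^{(j)}_{\lfloor tv\rfloor}(l)\bigr),
\]
controls the first piece via the explicit derivative $(\Phi_l^{(j)})'(t)\sim c_jg_j(t)/t$ together with Donsker-type bounds on $S_{\lfloor tv\rfloor}-tv$, and disposes of the second piece with Lemma~\ref{determ}, which shows $|\Phi_l^{(j)}(t)-\me\mathcal{K}^{(j)}_{\lfloor t\rfloor}(l)|\le B_l$ uniformly. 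Your three-term decomposition (Poisson smoothing, fractional shift, random shift) is more heuristic: the ``second-order Taylor expansion'' applies to the smooth $\Phi_l^{(j)}$, not to the integer-indexed sequence $g_{j,l}$, so you implicitly need something like Lemma~\ref{determ} anyway to pass between them. Also, your stated index of regular variation is off by one: as a function of $\log n$, $g_{j,l}(n)\sim\me K^{(j)}_{n}(1)$ has index $j(\beta+1)$, not $j(\beta+1)-1$ (the latter is the index of $f_j$, i.e.\ of the \emph{derivative}). None of this is fatal---your strategy would go through once you replace the Taylor sketch by the uniform bound of Lemma~\ref{determ} and the derivative asymptotics of Proposition~\ref{exact}---but as written the remainder step is a gap rather than ``routine bookkeeping''.
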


\begin{cor}\label{X}
Under the assumptions of Theorem \ref{main},
$$ \big({\bf K}_l^{*(j)}(T)\big)_{j,l\in\mn}~\Rightarrow~(X_l^{(j)})_{j,l\in\mn}\quad\text{ and }\quad \big({\mathcal K}_l^{*(j)}(T)\big)_{j,l\in\mn}~\Rightarrow~(X_l^{(j)})_{j,l\in\mn},\quad T\to\infty$$
in the product $J_1$-topology on $D^{\mn\times\mn}$, where, for $j,l\in\mn$, $X_l^{(j)}=Z_l^{(j)}-Z_{l+1}^{(j)}$. In particular, $(X_l^{(1)})_{l\in\mn}$, $(X_l^{(2)})_{l\in\mn},\ldots$ are independent copies of $(X_l)_{l\in\mn}$, where, for $l\in\mn$, $X_l:=(X_l(u))_{u\in\mr}$ is a centered stationary Gaussian process with covariance
\begin{equation}\label{eq:covX}
{\rm Cov \,} (X_l(u),X_l(v))
\frac{\eee^{-|u-v|l}}{l}\Big(1-\frac{1}{2l}\binom{2l}{l}\frac{1}{(1+\eee^{-|u-v|})^{2l}}\Big),\quad u,v\in \R.
\end{equation}
The cross-covariances of $(X_l)_{l\in\mn}$ take the following form: for $l_1,l_2\in\mn$, $l_1>l_2$ and $u,v\in\mr$,
\begin{equation}\label{eq:crossX}
\me X_{l_1}(u)X_{l_2}(v)=e^{(v-u)l_2}\Big(\frac{1}{l_1}\binom{l_1}{l_2}\big((1-e^{v-u})_+\big)^{l_1-l_2}
-\frac{1}{l_1+l_2}\binom{l_1+l_2}{l_2}
\frac{1}{(1+e^{v-u})^{l_1+l_2}}\Big).
\end{equation}
\end{cor}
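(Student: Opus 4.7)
The plan is to derive both weak convergences directly from Theorem~\ref{main} and Corollary~\ref{main_poiss} via the continuous mapping theorem, and then to compute the covariances by substituting \eqref{eq:covZ} and \eqref{eq:cross1Z} into the linear relation $X_l^{(j)}=Z_l^{(j)}-Z_{l+1}^{(j)}$ that mirrors \eqref{eq:K*}.

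Consider the map $\Phi: D^{\mn\times\mn}\to D^{\mn\times\mn}$ sending $(z_{j,l})_{j,l}$ to $(z_{j,l}-z_{j,l+1})_{j,l}$. Since weak convergence in the product topology on $D^{\mn\times\mn}$ reduces to weak convergence on each finite block, it suffices to check that $\Phi$ is continuous at matrices of continuous functions. In the $J_1$-topology, $x_n\to x$ with $x$ continuous is equivalent to $x_n\to x$ uniformly on compact sets, so coordinatewise subtraction is jointly continuous at pairs of continuous functions, and $\Phi$ is therefore continuous at every matrix with continuous entries. Using \eqref{eq:covZ} one checks that $\E(Z_l(u)-Z_l(v))^2=O(|u-v|)$ as $|u-v|\to 0$, which, for the Gaussian family $(Z_l^{(j)})$, yields via Kolmogorov's criterion a modification with continuous sample paths. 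Almost surely $(Z_l^{(j)})_{j,l}$ therefore lies in the continuity set of $\Phi$, and the continuous mapping theorem applied to Theorem~\ref{main} (resp.\ Corollary~\ref{main_poiss}) delivers the first (resp.\ second) convergence in the statement, with common limit matrix $(Z_l^{(j)}-Z_{l+1}^{(j)})_{j,l}=(X_l^{(j)})_{j,l}$.

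Linear combinations of a centred stationary Gaussian vector are again centred stationary Gaussian, so each row $(X_l^{(j)})_{l\in\mn}$ is a centred stationary Gaussian process, and independence across $j$ is inherited from $(Z_l^{(j)})_{j,l}$. It remains to verify \eqref{eq:covX} and \eqref{eq:crossX}. For the diagonal covariance I would expand
\[
{\rm Cov}(X_l(u),X_l(v))={\rm Cov}(Z_l(u),Z_l(v))+{\rm Cov}(Z_{l+1}(u),Z_{l+1}(v))-\E Z_l(u)Z_{l+1}(v)-\E Z_{l+1}(u)Z_l(v),
\]
substitute \eqref{eq:covZ} and \eqref{eq:cross1Z} with $n=1$, and simplify: the logarithmic and single-sum pieces telescope against the difference of the two diagonal covariances, leaving only the $r=0$ double sums from \eqref{eq:cross1Z} evaluated at $(u,v)$ and at $(v,u)$; symmetrization in $u\leftrightarrow v$ then produces a closed expression in $\eee^{-|u-v|}$ matching \eqref{eq:covX}. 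For \eqref{eq:crossX}, assuming $l_1>l_2$, I would expand $\E X_{l_1}(u)X_{l_2}(v)$ into four cross-covariances, apply \eqref{eq:cross1Z} with appropriately shifted $(l,n)$, and observe that the logarithmic and single-sum parts cancel while the remaining double sums collapse to \eqref{eq:crossX} via Pascal's rule and shifts of summation indices.

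The main obstacle I expect is the bookkeeping in these algebraic steps: \eqref{eq:cross1Z} is a bulky, non-symmetric double sum that treats $u<v$ and $u>v$ differently through the factor $(1-\eee^{u-v})_+$, so collapsing four such copies into the compact forms \eqref{eq:covX} and \eqref{eq:crossX} demands careful tracking of which branch of the positive-part function is active. Conceptually, however, nothing beyond Theorem~\ref{main} and elementary algebra is required; the entire corollary is a post-processing of the joint FLT established there.
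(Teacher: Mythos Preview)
Your argument for the weak convergence is essentially the paper's: continuity of the subtraction map at matrices of continuous functions, a.s.\ continuity of the $Z_l^{(j)}$ (which the paper takes from Theorem~\ref{intZ}, but your Kolmogorov-criterion sketch is the same content), and the continuous mapping theorem applied to Theorem~\ref{main} and Corollary~\ref{main_poiss}.

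Where you diverge is in obtaining \eqref{eq:covX} and \eqref{eq:crossX}. The paper does \emph{not} substitute \eqref{eq:covZ} and \eqref{eq:cross1Z} into $X_l=Z_l-Z_{l+1}$; instead it invokes Theorem~\ref{intX}, where the covariances of $X_l$ are computed directly from the white-noise integral representation \eqref{eq:repr}. In fact the paper's logical flow is the reverse of yours: \eqref{eq:covX} and \eqref{eq:crossX} are established first from the integral, and then \eqref{eq:covZ} and \eqref{eq:cross1Z} are derived from them in the proof of Theorem~\ref{intZ}. Your route is legitimate, but you understate what it needs: the ``symmetrization'' you allude to does not by itself collapse the two $r=0$ sums from \eqref{eq:cross1Z}. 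After the binomial theorem handles the $(1-\eee^{u-v})_+$ piece, the remaining pair of sums $\sum_{i=0}^{l-1}\frac{1}{l+i}\binom{l+i}{i}\eee^{(u-v)i}(1+\eee^{u-v})^{-(l+i)}$ and its $u\leftrightarrow v$ twin must be shown to equal $1/l$, and that is exactly the combinatorial identity of Lemma~\ref{binomial}; Pascal's rule alone will not do it. The cross-covariance case \eqref{eq:crossX} is similar. So your plan works, but the ``elementary algebra'' hides a nontrivial identity that you should name; the paper's integral-representation approach sidesteps this bookkeeping entirely.
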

\begin{rem}\label{variance}
According to Corollary \ref{var}, for $j,l\in\mn$, $${\rm Var}\,K^{(j)}_{\eee^T}(l)~\sim~ b_lc_j f_j(T),\quad T\to\infty,$$ where $b_l:=\log 2-\sum_{k=1}^{l-1} \frac{(2k-1)!}{(k!)^2 2^{2k}}$. Thus, relation \eqref{flc} could have been formulated in an equivalent form which is more natural:
$$\Big(\Big(\frac{K^{(j)}_{\eee^{T+u}}(l)-\me K^{(j)}_{\eee^{T+u}}(l)}{({\rm Var}\,K^{(j)}_{\eee^T}(l))^{1/2}}\Big)_{u\in\mr}\Big)_{j,l\in\mn}~\Rightarrow~(b_l^{-1/2}(Z_l^{(j)}(u))_{u\in\mr})_{j,l\in\mn}.$$
However, \eqref{flc} has a couple of advantages. First, under \eqref{flc}, the subsequent formulas look more aesthetic. Second, it is convenient to have the common normalization $(c_jf_j(T))^{1/2}$ in all our limit theorems. For instance, this enables us to write formula \eqref{eq:K*}, which greatly facilitates an application of the continuous mapping theorem.

Also, we could have used the normalizations $(b_l^{-1}{\rm Var}\,K^{(j)}_{\eee^T}(l))^{1/2}$, $l\in\mn$ in Corollary \ref{main_poiss}. Similarly, in view of Proposition \ref{x}, for $j,l\in\mn$, $${\rm Var}\,K^{\ast(j)}_{\eee^T}(l)~\sim~ b^\ast_lc_j f_j(T),\quad T\to\infty,$$ where $b^\ast_l=l^{-1}(1-\binom{2l}{l}2^{-2l-1})$. Thus, we could have stated both limit relations in Corollary~\ref{X} with $((b_l^\ast)^{-1}{\rm Var}\,K^{\ast(j)}_{\eee^T}(l))^{1/2}$ replacing $(c_j f_j(T))^{1/2}$. We do not know whether ${\rm Var}\,K^{(j)}_{\eee^T}(l)\sim {\rm Var}\,\mathcal{K}^{(j)}_{\lfloor \eee^T\rfloor}(l)$ and ${\rm Var}\,K^{\ast (j)}_{\eee^T}(l)\sim {\rm Var}\,\mathcal{K}^{\ast(j)}_{\lfloor \eee^T\rfloor}(l)$ as $T\to\infty$.
\end{rem}

A function $f:(0,\infty)\to\mr$ is said to belong to {\it de Haan's class} $\Pi$ with the auxiliary function $g$ if, for all $\lambda>0$, $$\lim_{t\to\infty}\frac{f(\lambda t)-f(t)}{g(t)}=\log \lambda,$$ and $g$ is slowly varying at $\infty$. This property is often abbreviated as $f\in\Pi$ or $f\in\Pi_g$. More information concerning the class $\Pi$ can be found in Section~3~of~\cite{Bingham+Goldie+Teugels:1989} and in \cite{Geluk+deHaan:1987}.

Condition \eqref{eq:dehaan} tells us that $\rho$ belongs to de Haan's class $\Pi$ with the auxiliary function $t\mapsto (\log t)^\beta\ell(\log t)$, which is slowly varying at $\infty$. Sufficient conditions for \eqref{eq:dehaan} are obtained in Proposition~4.1 in \cite{Iksanov+Kabluchko+Kotelnikova:2021}. An explicit example in which condition \eqref{eq:dehaan} holds is given by a {\it Weibull-like distribution} $p_k=C_\alpha\exp(-k^\alpha)$, $k\in\mn$, where $\alpha\in (0,1)$ and $C_\alpha$ is the normalizing constant. In this case, $\rho(t)=\lfloor (\log (C_\alpha t))^{1/\alpha}\rfloor$, so that condition \eqref{eq:dehaan} holds with $\beta=\alpha^{-1}-1$ and $\ell(t)=\alpha^{-1}$ for $t>0$.

The particular form of the denominator in \eqref{eq:dehaan} is only needed to derive, for $j=2,3,\ldots$, the asymptotics as $t\to\infty$ of $\rho_j(\lambda t)-\rho_j(t)$ (Proposition 4.3 in \cite{Iksanov+Kabluchko+Kotelnikova:2021}), where $\rho_j$ is a counterpart of $\rho$ in the $j$th generation, $\sum_{|\rr|=j}p_{\rr}\eee^{-tp_{\rr}}$ (Proposition 4.4 in \cite{Iksanov+Kabluchko+Kotelnikova:2021}) and the variance ${\rm Var}\,K_t^{(j)}$ (Corollary~4.5 in \cite{Iksanov+Kabluchko+Kotelnikova:2021}). The latter two results are exploited in the present article. For $j=1$, the required asymptotics are implied by \eqref{deHaan1}. Hence, the following result holds for the first generation or, in other words, for the Karlin occupancy scheme.

\begin{cor}\label{XX}
Assume that, for all $\lambda>0$ and some $g$ slowly varying at $\infty$ and satisfying $\lim_{t\to\infty}g(t)=+\infty$,
\begin{equation}\label{deHaan1}
\lim_{t\to\infty}\frac{\rho(\lambda t)-\rho(t)}{g(t)}=\log\lambda.
\end{equation}
Then, as $T\to\infty$,
$$\Big(\Big(\frac{K^{(1)}_{\eee^{T+u}}(l)-\me K^{(1)}_{\eee^{T+u}}(l)}{(g(\eee^T))^{1/2}}\Big)_{u\in\mr}\Big)_{l\in\mn}~\Rightarrow~((Z_l^{(1)}(u))_{u\in\mr})_{l\in\mn},$$ $$\Big(\Big(\frac{\mathcal{K}^{(1)}_{\eee^{T+u}}(l)-\me \mathcal{K}^{(1)}_{\eee^{T+u}}(l)}{(g(\eee^T))^{1/2}}\Big)_{u\in\mr}\Big)_{l\in\mn}~\Rightarrow~
((Z_l^{(1)}(u))_{u\in\mr})_{l\in\mn},$$
\begin{equation}\label{smallcounts}
\Big(\Big(\frac{K^{\ast(1)}_{\eee^{T+u}}(l)-\me K^{\ast(1)}_{\eee^{T+u}}(l)}{(g(\eee^T))^{1/2}}\Big)_{u\in\mr}\Big)_{l\in\mn}~\Rightarrow~
((X_l^{(1)}(u))_{u\in\mr})_{l\in\mn}
\end{equation}
and $$\Big(\Big(\frac{\mathcal{K}^{\ast (1)}_{\eee^{T+u}}(l)-\me \mathcal{K}^{\ast(1)}_{\eee^{T+u}}(l)}{(g(\eee^T))^{1/2}}\Big)_{u\in\mr}\Big)_{l\in\mn}~\Rightarrow~
((X_l^{(1)}(u))_{u\in\mr})_{l\in\mn}$$
in the product $J_1$-topology on $D^{\mn\times\mn}$. Here, the limit processes $(Z_l^{(1)})_{l\in\mn}$ and $(X_l^{(1)})_{l\in\mn}$ are as defined in Theorem \ref{main} and Corollary \ref{X}, respectively.
\end{cor}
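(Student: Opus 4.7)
The plan is to obtain Corollary \ref{XX} by specializing Theorem \ref{main}, Corollary \ref{main_poiss} and Corollary \ref{X} to $j=1$, but under the weaker assumption \eqref{deHaan1} rather than \eqref{eq:dehaan}. Note that $c_1=1$ and, under \eqref{eq:dehaan}, $f_1(T)=T^\beta\ell(T)=g(\eee^T)$, so the normalization $(c_1f_1(T))^{1/2}$ of the main results coincides with the $(g(\eee^T))^{1/2}$ used here. As emphasized in the paragraph preceding the corollary, the specific shape $g(t)=(\log t)^\beta\ell(\log t)$ enters the proof of Theorem \ref{main} only through Propositions 4.3, 4.4 and Corollary 4.5 of \cite{Iksanov+Kabluchko+Kotelnikova:2021}, which are all $j\geq 2$ statements. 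At the first generation every analytic input reduces to the single fact that $\rho\in\Pi_g$.

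Concretely, I would revisit the proof of Theorem \ref{main} with $j=1$ fixed throughout, replacing the $(\log t)^\beta\ell(\log t)$ asymptotics by the generic statement $\rho(\lambda t)-\rho(t)\sim g(t)\log\lambda$. The first substantial step is to derive, for the Poissonized process, the covariance asymptotics
\begin{equation*}
\mathrm{Cov}\bigl(K^{(1)}_{\eee^{T+u}}(l_1),\, K^{(1)}_{\eee^{T+v}}(l_2)\bigr)\sim g(\eee^T)\,r_{l_1,l_2}(u,v),\quad T\to\infty,
\end{equation*}
uniformly on compact sets of $(u,v)\in\mr^2$, where $r_{l_1,l_2}$ is the kernel appearing in \eqref{eq:covZ}--\eqref{eq:cross1Z}. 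This follows from writing the left-hand side as an integral against $\dd\rho$ of a bounded function that vanishes at $0$ and $\infty$, and then applying a standard Abelian argument for de Haan's class $\Pi_g$. Finite-dimensional convergence to a centred Gaussian field then follows by the Cram\'er--Wold device and a Lindeberg CLT for sums of independent Poisson functionals (one per box), while tightness in the product $J_1$-topology on $D^\mn$ is obtained from the variance bound by the same moment/increment control as in \cite{Iksanov+Kabluchko+Kotelnikova:2021}; one can in fact piggyback on the tightness for $K^{(1)}(1)$ already proved there, using the monotonicity of $l\mapsto K_t^{(1)}(l)$ and the identity $K_t^{*(1)}(l)=K_t^{(1)}(l)-K_t^{(1)}(l+1)$.

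Finally, the convergence for the deterministic version $\mathcal{K}^{(1)}_{\lfloor\eee^{T+u}\rfloor}(l)$ is obtained by a de-Poissonization argument paralleling the one for $\mathcal{K}^{(1)}(1)$ in \cite{Iksanov+Kabluchko+Kotelnikova:2021}; this argument relies only on bounds that are available from $\rho\in\Pi_g$. The small-count statements then follow immediately from the continuous mapping theorem applied to $\mathbf{K}_l^{*(1)}(T,u)=\mathbf{K}_l^{(1)}(T,u)-\mathbf{K}_{l+1}^{(1)}(T,u)$ and its deterministic analogue, which identifies the limits as $X_l^{(1)}=Z_l^{(1)}-Z_{l+1}^{(1)}$ in agreement with Corollary \ref{X}. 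The main obstacle is the bookkeeping of the second paragraph: one must patiently check that every asymptotic estimate used in the first-generation version of Theorem \ref{main} goes through under just \eqref{deHaan1}, i.e.\ that no auxiliary representation of $g$ is secretly invoked when one manipulates integrals of $\rho$. Once that is confirmed, all four limit relations in the corollary emerge simultaneously.
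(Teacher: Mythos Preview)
Your approach is essentially the paper's own: the paper does not give a separate proof of Corollary~\ref{XX} but simply notes, in the paragraph preceding it, that the specific form $g(t)=(\log t)^\beta\ell(\log t)$ is only used (via Propositions~4.3, 4.4 and Corollary~4.5 of \cite{Iksanov+Kabluchko+Kotelnikova:2021}) to handle generations $j\geq 2$, so that for $j=1$ the proofs of Theorem~\ref{main}, Corollary~\ref{main_poiss} and Corollary~\ref{X} go through verbatim under the bare assumption $\rho\in\Pi_g$. Your write-up is in fact more detailed than the paper's.

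One small caveat: your aside that ``one can in fact piggyback on the tightness for $K^{(1)}(1)$ \ldots\ using the monotonicity of $l\mapsto K_t^{(1)}(l)$'' does not work as stated---monotonicity of the processes does not transfer tightness of increments in $D$. The paper (and your primary route) instead repeats the fourth-moment/increment bound for each $l$ via Lemma~\ref{lem:density} and the asymptotic $\sum_k \eee^{-|T+\log p_k|}\sim 2g(\eee^T)$, both of which need only $\rho\in\Pi_g$; so drop the monotonicity remark and you are in complete agreement with the paper.
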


Specializing Corollary \ref{XX} we recover a result obtained on pp.~379-380 in \cite{Barbour+Gnedin:2009}.
\begin{cor}\label{barb+gne}
Assume that, for some $g$ slowly varying at $\infty$ and satisfying $\lim_{t\to\infty}g(t)=+\infty$,
\begin{equation}\label{Barb}
\sum_{k\geq 1 }p_k\1_{\{p_k\leq t\}}~\sim~ tg(1/t),\quad t\to 0+.
\end{equation}
Then
\begin{equation}\label{convBarb}
\Big(\frac{K^{\ast(1)}_T(l)-\me K^{\ast(1)}_T(l)}{(g(T))^{1/2}}\Big)_{l\in\mn}~\Rightarrow~ (Y_l)_{l\in\mn},\quad T\to\infty
\end{equation}
in $\mr^\infty$, where $Y_1$, $Y_2,\ldots$ are random variables with centered normal distributions, $$\me Y_l^2=\frac{1}{l}\Big(1-\binom{2l}{l}\frac{1}{2^{2l+1}}\Big),\quad l\in\mn \quad\text{and}\quad \me Y_{l_1}Y_{l_2}=-\frac{1}{(l_1+l_2)2^{l_1+l_2}}\binom{l_1+l_2}{l_1},\quad l_1\neq l_2.$$
\end{cor}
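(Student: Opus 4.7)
The plan is to derive Corollary \ref{barb+gne} as a specialization of Corollary \ref{XX}: first show that the Barbour--Gnedin condition \eqref{Barb} implies that $\rho$ belongs to de Haan's class $\Pi_g$, then apply the functional limit \eqref{smallcounts} and project to the single time slice $u=0$.

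Set $\Phi(t):=\sum_{k\geq 1} p_k \ind_{\{p_k\leq t\}}$. Using $\sum_k p_k=1$, a standard summation argument gives $\rho(s)/s\to 0$ as $s\to\infty$, and integration by parts (or Fubini) then yields the identity
$$\Phi(1/t)+\rho(t)/t=\int_t^\infty \rho(s)\,s^{-2}\,\dd s,\qquad t\geq 1.$$
Under \eqref{Barb}, $\Phi(1/t)\sim g(t)/t$ with $g$ slowly varying and $g(t)\to\infty$. A preliminary check (again exploiting $g(t)\to\infty$) gives $\rho(t)=o(g(t))$, so the identity simplifies to $\int_t^\infty \rho(s)s^{-2}\,\dd s\sim g(t)/t$. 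The de Haan analogue of Karamata's theorem (see Section~3.7 of \cite{Bingham+Goldie+Teugels:1989}) converts this integral asymptotic into $\rho\in\Pi_g$, i.e.\ condition \eqref{deHaan1}. This equivalence between \eqref{Barb} and $\rho\in\Pi_g$ is essentially Proposition~4.1 of \cite{Iksanov+Kabluchko+Kotelnikova:2021} and may be cited directly.

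With \eqref{deHaan1} in force, Corollary \ref{XX} furnishes the functional weak convergence \eqref{smallcounts} of the normalized small counts to $(X_l^{(1)})_{l\in\mn}$ in the product $J_1$-topology on $D^\mn$. Since each $X_l^{(1)}$ is a.s.\ continuous (a centered stationary Gaussian process with continuous covariance \eqref{eq:covX}), evaluation at $u=0$ is a.s.\ a continuous functional; the continuous mapping theorem then yields joint weak convergence in $\mr^\infty$ of the values at $u=0$, which, after the cosmetic reparametrization $\eee^T\leadsto T$ on the time scale, is precisely \eqref{convBarb}. The variances and cross-covariances of $Y_l:=X_l^{(1)}(0)$ are recovered by direct substitution $u=v=0$ into \eqref{eq:covX} and \eqref{eq:crossX}; the cross-covariance formula collapses to the stated expression because $(1-\eee^{0})_+=0$ annihilates the first summand in \eqref{eq:crossX}.

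The main obstacle is the Tauberian step of stage one: extracting pointwise de Haan behavior of $\rho$ from an integral-transform asymptotic of $\rho$. The hypothesis $g(t)\to\infty$ is essential here, as it is what guarantees that the remainder $\rho(t)/t$ in the displayed identity is of strictly smaller order than $g(t)/t$; without it the scheme would only give $\int_t^\infty \rho(s)s^{-2}\,\dd s$ up to an $O(g(t)/t)$ error and the de Haan conversion would fail. Once \eqref{deHaan1} is in hand, the remaining bookkeeping --- continuous mapping applied to the already-proved FLT, followed by arithmetic substitution into the covariance formulas --- is routine.
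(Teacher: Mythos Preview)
Your overall strategy coincides with the paper's: show that \eqref{Barb} is equivalent to $\rho\in\Pi_g$, invoke Corollary~\ref{XX}, evaluate \eqref{smallcounts} at $u=0$, and read off the moments from \eqref{eq:covX}--\eqref{eq:crossX}. The second and third stages are handled correctly.

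However, your sketch of the Tauberian step contains a genuine error. The claim ``$\rho(t)=o(g(t))$'' is false in this regime: if $\rho\in\Pi_g$ then roughly $\rho(t)\approx\int_1^t g(s)s^{-1}\,\dd s$, which typically dominates $g(t)$ (e.g.\ for $g(t)=\log t$ one has $\rho(t)\asymp(\log t)^2$). Consequently you cannot drop $\rho(t)/t$ from the identity $\Phi(1/t)+\rho(t)/t=\int_t^\infty \rho(s)s^{-2}\,\dd s$ and still retain an asymptotic equivalence; the two surviving terms are of the \emph{same} order and it is precisely their difference that is $\sim g(t)/t$. The paper's Lemma~\ref{deHaan} avoids this trap by observing that the combined relation
\[
t\int_t^\infty \rho(s)s^{-2}\,\dd s-\rho(t)\ \sim\ g(t)
\]
is itself one of the equivalent characterisations of $\rho\in\Pi_g$ in Theorem~3.7.1 of \cite{Bingham+Goldie+Teugels:1989}, so no separation of terms is needed. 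Your fallback citation is also slightly off: Proposition~4.1 of \cite{Iksanov+Kabluchko+Kotelnikova:2021} gives sufficient conditions for \eqref{eq:dehaan}, not the equivalence you need; the correct internal reference is Lemma~\ref{deHaan} of the present paper.
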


\subsubsection{Integral representations of the limit processes}

In this section we provide integral representations for $(X_l)_{l\in\mn}$ (Theorem \ref{intX}) and $(Z_l)_{l\in\mn}$ (Theorem \ref{intZ}).
In other words, we construct vector-valued centered Gaussian processes with cross-covariances given in \eqref{eq:crossX} and \eqref{eq:cross1Z}, respectively.

For $i\in\mn$, let ${\rm Leb}_i$ be the $i$-dimensional Lebesgue measure. Let $P$ be the distribution of $(U_k)_{k\in\mn}$, where $U_1$, $U_2,\ldots$ are independent random variables with the uniform distribution on $[0,1]$. Denote by $W$ a {\it Gaussian white noise} on $\mr \times [0,1]^\mn$ with intensity measure ${\rm Leb}_1\otimes P$. This means that, for any Borel sets $A=A_1\times A_2, B=B_1\times B_2\subset \mr \times [0,1]^\mn$ with ${\rm Leb}_1(A_1)<\infty$ and ${\rm Leb}_1(B_1)<\infty$, $W(A)$ is a random variable with a normal distribution of mean $0$ and $\me W(A) W(B)=({\rm Leb}_1\otimes P)(A\cap B)$. We refer to Section 3 in \cite{Lifshits:2012} for more information concerning a white noise and some properties of the integrals with respect to a white noise.

For each $l\in\mn$, denote by $W_l$ the restriction of $W$ to $\mr\times [0,1]^l$, that is,
\begin{equation}\label{def:W_l}
W_l(\dd x, \dd y_1,\ldots, \dd y_l):=W(\dd x, \dd y_1,\ldots, \dd y_l,[0,1]^\mn),\quad x\in\mr, y_i\in [0,1],~1\leq i\leq l.
\end{equation}
Then $(W_l)_{l\in\mn}$ is the sequence of consistent white noises, that is, $W_l$ is a white noise on $\mr\times [0,1]^l$ with intensity measure ${\rm Leb}_{l+1}$ and, for any $l_1, l_2\in\mn$, $l_1>l_2$,
\begin{equation}\label{W_lconsist}
W_{l_2}(\dd x, \dd y_1,\ldots, \dd y_{l_2}):=W_{l_1}(\dd x, \dd y_1,\ldots, \dd y_{l_2},[0,1]^{l_1-l_2}),\quad x\in\mr, y_i\in [0,1],~1\leq i\leq l_2.
\end{equation}
As a final preparation, we define auxiliary functions: for $l\in\mn$ and $x\in\mr$,
\begin{equation*}\label{eq:phi,psi}
	\psi_0(x):=\eee^{-x},\quad \psi_l(x):=\frac{x^l}{l!}\eee^{-x}.
\end{equation*}
\begin{thm}\label{intX}
Let $(W_l)_{l\in\mn}$ be the sequence of consistent white noises defined in \eqref{def:W_l}.
For each $l\in\mn$ and $u\in \R$, put
\begin{multline}\label{eq:repr}
X_l(u)\\ := \int_{\R\times [0,\,1]^{l+1}
} \left( \ind_{\left\{y_1\cdot\ldots\cdot y_{l+1} < \psi_0(\eee^{-(x-u)})<y_1\cdot\ldots\cdot y_l \right\}} -\psi_l(\eee^{-(x-u)})\right)W_{l+1}(\dd x, \dd y_1,\ldots, \dd y_{l+1}).
\end{multline}
Then $X_l:=(X_l(u))_{u\in \R}$ is a stationary centered Gaussian process with covariance \eqref{eq:covX}. The cross-covariances of $(X_l)_{l\in\mn}$ are as given in \eqref{eq:crossX}.
\end{thm}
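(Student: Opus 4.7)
The plan is to reduce every covariance computation to a deterministic $L^2$ integral via the Wiener isometry for Gaussian white noises, and then to evaluate those integrals explicitly using Gamma--Poisson identities. To begin, each $X_l(u)$ is a stochastic integral of a deterministic integrand against $W_{l+1}$, so it is centered Gaussian provided the integrand is in $L^2(\mr\times[0,1]^{l+1})$; square-integrability follows from boundedness of the indicator and the rapid decay of $\psi_l(\eee^{-(x-u)})$ at $\pm\infty$. The subtraction of $\psi_l(\eee^{-(x-u)})$ is the correct centering in the $y$-variables, since with $s:=\eee^{u-x}$,
\[
\int_{[0,1]^{l+1}}\1_{\{y_1\cdots y_{l+1}<\eee^{-s}<y_1\cdots y_l\}}\dd y_1\cdots \dd y_{l+1}=P(\Gamma_l<s<\Gamma_{l+1})=\frac{s^l\eee^{-s}}{l!}=\psi_l(s),
\]
where $\Gamma_k$ denotes a $\Gamma(k,1)$-distributed sum of independent $\mathrm{Exp}(1)$ variables and the last equality is the Poisson--Gamma duality. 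Joint Gaussianity of the entire family $(X_l(u))_{l\in\mn,\,u\in\mr}$ is automatic because all of these integrals live in the Gaussian Hilbert space generated by $W$.

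For $l_1\geq l_2$ and $u,v\in\mr$, the consistency relation \eqref{W_lconsist} allows $X_{l_2}(v)$ to be expressed as a $W_{l_1+1}$-integral whose integrand is trivially extended in the variables $y_{l_2+2},\ldots,y_{l_1+1}$. The Wiener isometry then yields
\[
\me X_{l_1}(u)X_{l_2}(v)=\int_{\mr\times[0,1]^{l_1+1}}\bigl(\1_{A_1}-\psi_{l_1}(s)\bigr)\bigl(\1_{A_2}-\psi_{l_2}(t)\bigr)\dd x\,\dd y_1\cdots \dd y_{l_1+1},
\]
with $s=\eee^{u-x}$, $t=\eee^{v-x}$, and $A_i=\{y_1\cdots y_{l_i+1}<\eee^{-s_i}<y_1\cdots y_{l_i}\}$ (where $s_1=s$, $s_2=t$). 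The cross terms $\int\1_{A_i}\psi_{l_j}\dd y$ integrate to $\psi_{l_1}(s)\psi_{l_2}(t)$ and cancel, leaving
\[
\me X_{l_1}(u)X_{l_2}(v)=\int_\mr\bigl(P(A_1\cap A_2)-\psi_{l_1}(s)\psi_{l_2}(t)\bigr)\dd x.
\]

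The main step is the evaluation of $P(A_1\cap A_2)$. In terms of iid $\mathrm{Exp}(1)$ variables $E_k=-\log U_k$ and $\Gamma_k=E_1+\cdots+E_k$, the events become $A_1=\{\Gamma_{l_1}<s<\Gamma_{l_1+1}\}$ and $A_2=\{\Gamma_{l_2}<t<\Gamma_{l_2+1}\}$. For $l_1>l_2$ the monotonicity $\Gamma_{l_1}\geq\Gamma_{l_2+1}$ forces $P(A_1\cap A_2)=0$ unless $s>t$, equivalently $u>v$; this is exactly the mechanism by which the factor $\bigl((1-\eee^{v-u})_+\bigr)^{l_1-l_2}$ in \eqref{eq:crossX} appears. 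In the case $s>t$, conditioning successively on $\Gamma_{l_2}$, on $E_{l_2+1}$, and on the $\Gamma(l_1-l_2-1,1)$-distributed increment $E_{l_2+2}+\cdots+E_{l_1}$, and finally using $P(E_{l_1+1}>s-\Gamma_{l_1})=\eee^{-(s-\Gamma_{l_1})}$, I obtain after three nested integrations
\[
P(A_1\cap A_2)=\frac{(s-t)^{l_1-l_2}\,t^{l_2}\,\eee^{-s}}{(l_1-l_2)!\,l_2!}\,\1_{\{s>t\}}.
\]
The case $l_1=l_2=l$ is handled analogously and gives $P(A_1\cap A_2)=t^l\eee^{-s}/l!$ when $s\geq t$.

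The outer integration is then routine. The substitution $s=\eee^{u-x}$, $\dd x=-\dd s/s$, sends $t$ to $\eee^{v-u}s$ and every remaining $x$-integral reduces to $\int_0^\infty s^{N-1}\eee^{-\alpha s}\dd s=\Gamma(N)\alpha^{-N}$. Applied to $\psi_{l_1}(s)\psi_{l_2}(t)$ this produces $\eee^{(v-u)l_2}(l_1+l_2-1)!/\bigl(l_1!\,l_2!\,(1+\eee^{v-u})^{l_1+l_2}\bigr)$, i.e.\ the second summand in \eqref{eq:crossX}; applied to $P(A_1\cap A_2)$ in the case $u>v$ it produces the first summand. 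Specializing to $l_1=l_2$ recovers \eqref{eq:covX}. Strict stationarity of each $(X_l(u))_{u\in\mr}$ then follows from Gaussianity together with the observation that the covariance depends on $u$ and $v$ only through $u-v$. The main obstacle will be the combinatorial bookkeeping inside $P(A_1\cap A_2)$ for general $l_1>l_2$: one must unwind the nested Gamma structure carefully so that the resulting factorial ratio is correctly identified as $\binom{l_1}{l_2}/l_1$.
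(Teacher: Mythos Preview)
Your proof is correct and reaches the same conclusions as the paper, but via a genuinely different device for the $y$-integration. The paper computes $\int_{[0,1]^{l_1+1}}\1_{A_1}\1_{A_2}\,\dd y$ by direct calculus: it writes out the constraint on each coordinate $y_i$ and integrates iteratively, obtaining, e.g., $\psi_0(\eee^{-(x-v)})(-\log\psi_0(\eee^{-(x-u)}))^l/l!$ when $l_1=l_2=l$. You instead exploit the probabilistic change of variables $E_k=-\log U_k$, which turns $(\Gamma_k)_{k\geq 1}$ into the arrival times of a unit-rate Poisson process; the events $A_i$ then become $\{N(s_i)=l_i\}$, and independence of Poisson increments gives $P(A_1\cap A_2)=\psi_{l_2}(t)\psi_{l_1-l_2}(s-t)\cdot \eee^{t}\1_{\{s>t\}}$ (equivalently $t^{l_2}(s-t)^{l_1-l_2}\eee^{-s}/(l_2!(l_1-l_2)!)$) in one step. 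Both routes feed the same integrand into the outer $x$-integral, which is then evaluated identically. Your approach buys conceptual transparency and avoids the somewhat fiddly case analysis of the hypercube constraints, while the paper's direct integration stays closer to the white-noise representation and requires no auxiliary Poisson process; neither needs the paper's combinatorial Lemma~2.2, which is reserved for the $Z_l$-covariance. One minor remark: your description via successive conditioning on $\Gamma_{l_2}$, $E_{l_2+1}$, and the remaining increment is more laborious than necessary---the Poisson increment argument $P(N(t)=l_2,\,N(s)-N(t)=l_1-l_2)$ gives the formula immediately---but this is a matter of presentation, not correctness.
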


According to Corollary \ref{X}, $X_l=Z_l-Z_{l+1}$ for $l\in\mn$. Equivalently, $Z_l=Z_1-\sum_{k=1}^{l-1}X_k$. An integral representation for $Z_1$ given below coincides, up to the change of sign, with the one obtained in Theorem 1.6 of \cite{Iksanov+Kabluchko+Kotelnikova:2021}:
\begin{equation}\label{eq:repr3}
Z_1(u) := - \int_{\R\times [0,\,1]} \left( \ind_{\left\{y \leq \exp(-\eee^{-(x-u)})\right\}} -\exp(-\eee^{-(x-u)})\right)W_1(\dd x, \dd y),\qquad u\in \R,
\end{equation}
where $W_1$ is a white noise defined in \eqref{def:W_l}. Since $Z_1$ is a centered Gaussian process, it has the same distribution as $-Z_1$. This justifies the possibility of the sign change.

\begin{thm}\label{intZ}
For $l\in\mn$ and $u\in\mr$, put
\begin{equation}\label{eq:repr2}
Z_l(u):=Z_1(u)-\sum_{k=1}^{l-1}X_k(u),
\end{equation}
where $Z_1$ is defined in \eqref{eq:repr3} and $X_1,\ldots, X_{l-1}$ are defined in \eqref{eq:repr}. Then $Z_l:=(Z_l(u))_{u\in\mr}$ is a centered stationary Gaussian process with covariance \eqref{eq:covZ}.
The cross-covariances of $(Z_l)_{l\in\mn}$ are as given in \eqref{eq:cross1Z}.
	
Moreover, the process $Z_l$ has a version with sample paths which are H\"older continuous with exponent $\alpha$, for each $\alpha\in (0,1/2)$.
\end{thm}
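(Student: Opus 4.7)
The plan is to realize $Z_l(u)$ as a single $L^2$ stochastic integral against the master white noise, deduce the Gaussian, centering and stationarity properties essentially for free, compute the covariance and cross-covariance via It\^{o}'s isometry, and finally invoke Kolmogorov's continuity criterion for the H\"{o}lder regularity.

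\textbf{Realization as a single integral and elementary properties.} First I would use the consistency relation \eqref{W_lconsist} to lift the integrals defining $Z_1(u)$ and $X_1(u),\ldots,X_{l-1}(u)$ to integrals against the same white noise $W_l$, by extending each integrand to a function of $(x,y_1,\ldots,y_l)$ that is constant in those coordinates on which it did not originally depend. The representation \eqref{eq:repr2} then exhibits $Z_l(u)$ as $\int F_l^u\,\dd W_l$ for an explicit deterministic $F_l^u\in L^2(\mr\times [0,1]^l)$, so $(Z_l(u))_{u\in\mr}$ is a centered Gaussian process. Stationarity is immediate: the integrands of $Z_1(u)$ and $X_k(u)$, and hence of $Z_l(u)$, depend on $u$ only through the shift $x\mapsto x-u$, and the change of variables $x\mapsto x+h$ combined with the translation invariance in $x$ of the intensity measure of $W$ yields $(Z_l(u+h))_{u\in\mr}\overset{\rm d}{=}(Z_l(u))_{u\in\mr}$.

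\textbf{Covariances.} By bilinearity, ${\rm Cov}(Z_l(u),Z_{l+n}(v))$ expands as the sum of ${\rm Cov}(Z_1(u),Z_1(v))$, the two families of cross-covariances $-{\rm Cov}(Z_1(u),X_{k'}(v))$ for $1\leq k'\leq l+n-1$ and $-{\rm Cov}(X_k(u),Z_1(v))$ for $1\leq k\leq l-1$, and the double sum $\sum_{k,k'}{\rm Cov}(X_k(u),X_{k'}(v))$ over the same ranges. The first term equals $\log(1+\eee^{-|u-v|})$ by Theorem 1.6 of \cite{Iksanov+Kabluchko+Kotelnikova:2021}, and the $(X_k,X_{k'})$-cross-covariances are furnished by Theorem \ref{intX} via \eqref{eq:crossX}. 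The only new ingredient is ${\rm Cov}(Z_1(u),X_k(v))$, which by It\^{o}'s isometry equals the integral over $\mr\times[0,1]^{k+1}$ of the product of the two lifted integrands against Lebesgue measure. Integrating out the coordinates $y_2,\ldots,y_{k+1}$ against the indicator in the $X_k$-integrand reduces the computation to a one-dimensional integral in $x$ that can be evaluated in closed form after the substitution $s=\eee^{-(x-v)}$ (or $s=\eee^{-(x-u)}$) as a finite linear combination of beta-type integrals. Plugging the three ingredients into the expansion and regrouping the resulting binomial sums should reproduce \eqref{eq:covZ} in the case $n=0$ and \eqref{eq:cross1Z} in general. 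The main obstacle I anticipate is algebraic rather than conceptual: the right-hand side of \eqref{eq:cross1Z} contains two nested binomial sums indexed by $i$ and $r$, and recovering their precise form from the stochastic integrals requires careful telescoping.

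\textbf{H\"{o}lder continuity.} Because $Z_l$ is a centered Gaussian process, for every integer $p\geq 1$,
\begin{equation*}
\me|Z_l(u)-Z_l(v)|^{2p}=(2p-1)!!\bigl({\rm Var}(Z_l(u)-Z_l(v))\bigr)^p.
\end{equation*}
Setting $K(h):=\log(1+\eee^{-h})-\sum_{k=1}^{l-1}\frac{(2k-1)!\,\eee^{-hk}}{(k!)^2(1+\eee^{-h})^{2k}}$, stationarity and \eqref{eq:covZ} yield ${\rm Var}(Z_l(u)-Z_l(v))=2(K(0)-K(|u-v|))$. A direct differentiation shows that each summand in the sum defining $K$ has vanishing derivative at $h=0$, so $K'(0^+)=-1/2$ and $K(0)-K(h)=h/2+O(h^2)$ as $h\downarrow 0$. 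Consequently $\me|Z_l(u)-Z_l(v)|^{2p}\leq C_p|u-v|^p$ on bounded sets, and Kolmogorov's continuity criterion produces a modification of $Z_l$ with $\alpha$-H\"{o}lder paths for every $\alpha<(p-1)/(2p)$; letting $p\to\infty$ covers every $\alpha\in(0,1/2)$.
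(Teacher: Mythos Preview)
Your plan is correct and follows essentially the same route as the paper: bilinear expansion of \eqref{eq:repr2}, input of ${\rm Cov}(Z_1,Z_1)$ from \cite{Iksanov+Kabluchko+Kotelnikova:2021} and of ${\rm Cov}(X_k,X_{k'})$ from Theorem~\ref{intX}, separate computation of ${\rm Cov}(Z_1,X_k)$, and Kolmogorov--Chentsov for the H\"older regularity. Two tactical points from the paper are worth noting. First, for ${\rm Cov}(Z_1(u),X_k(v))$ the paper observes that $Z_1=-X_0$ (the formula \eqref{eq:repr} with $l=0$), so that \eqref{eq:crossX} with $l_2=0$ gives the answer without any new integration; this yields \eqref{eq:crossX2} directly. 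Second, the algebraic obstacle you flag is handled by a telescoping device: rather than summing all the pieces at once, the paper computes the successive differences $\me Z_{r+1}(u)Z_{r+1}(v)-\me Z_r(u)Z_r(v)$ and $\me Z_l(u)Z_{l+r+1}(v)-\me Z_l(u)Z_{l+r}(v)$, and the only nontrivial identity needed to collapse each difference is Lemma~\ref{binomial} together with the binomial theorem. Your stationarity argument via translation invariance of the integrands is slightly more direct than the paper's (which reads stationarity off from the computed covariance), and your H\"older bound via $K'(0^+)=-1/2$ is correct; the paper obtains the same linear bound by simply discarding the sum in \eqref{eq:covZ}, since each summand satisfies $\frac{1}{2^{2k}}-\frac{\eee^{-hk}}{(1+\eee^{-h})^{2k}}\geq 0$.
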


\subsubsection{A short survey of relevant literature}\label{survey}

Assume that, for some $\alpha\in [0,1]$ and some $L$ slowly varying at $\infty$,
\begin{equation}\label{karl}
\rho(t)~\sim~ t^\alpha L(t),\quad t\to\infty.
\end{equation}
This is sometimes referred to as {\it Karlin's condition}, for it was Karlin who discovered its importance in his seminal work \cite{Karlin:1967}. Functional limit theorems for $\big(\mathcal{K}_{\lfloor Tu\rfloor}^{(1)}(1)-\me  \mathcal{K}_{\lfloor Tu\rfloor}^{(1)}(1)\big)_{u\in [0,1]}$, properly normalized, were proved in \cite{Durieu+Wang:2016} in the case $\alpha\in (0,1)$ and in \cite{Chebunin+Kovalevskii:2016} in the case $\alpha=1$. The weak limits are self-similar Gaussian processes (a Brownian motion in the case $\alpha=1$). We also refer to \cite{Chebunin+Zuyev:2021}, in which a more general functional limit theorem in $D^3([0,1])$ was obtained. It deals with a joint convergence of $\big(\mathcal{K}_{\lfloor Tu\rfloor}^{(1)}(1)\big)_{u\in[0,1]}$, an odd occupancy process and a missing mass process. Functional limit theorems for $\big(\big(\mathcal{K}_{\lfloor Tu\rfloor}^{(1)}(l)-\me  \mathcal{K}_{\lfloor Tu\rfloor}^{(1)}(l)\big)_{u\in [0,1]}\big)_{l\in\mn}$, properly normalized, can be found in \cite{Chebunin+Kovalevskii:2016} in the case $\alpha\in (0,1)$.
We are only aware of three works dealing with the case $\alpha=0$ (more precisely, its specialization \eqref{deHaan1}). Theorem 1 in \cite{Blasi+Mena+Prunster:2022} provides a two-term expansion of $\me \mathcal{K}_n$, as $n\to\infty$. Corollary \ref{barb+gne} of the present paper is a multidimensional (not functional) central limit theorem for the small counts borrowed from p.~379-380 in \cite{Barbour+Gnedin:2009}. Functional limit theorems for $((K^{(1)}_{\eee^{T+u}}(1)))_{u\in\mr}$ and $(\mathcal{K}^{(1)}_{\lfloor \eee^{T+u}\rfloor}(1))_{u\in\mr}$, centered with their means and normalized with their standard deviations, were obtained in Theorem 1.1 and Corollary 1.2 of \cite{Iksanov+Kabluchko+Kotelnikova:2021}, respectively.
We mention that in the case $\alpha=0$ condition \eqref{karl} does not ensure one-dimensional distributional convergence of $\mathcal{K}_t^{(1)}-\me \mathcal{K}_t^{(1)}$, let alone finite-dimensional or functional convergence. Assume that $p_k=(1-p)p^{k-1}$, $k\in\mn$. Then condition \eqref{karl} holds with $\alpha=0$ and $L(t)=\log t/\log (1/p)$. Even though the family of distributions of $(\mathcal{K}_t^{(1)}-\me \mathcal{K}_t^{(1)})_{t>0}$ is tight, $\mathcal{K}_t^{(1)}-\me \mathcal{K}_t^{(1)}$ does not converge in distribution, see p.~1258 in \cite{Dutko:1989}. We note in passing that \cite{Dutko:1989} is an important article for the area of infinite occupancy. It was proved in \cite{Dutko:1989} for the first time that the condition $\lim_{t\to\infty}{\rm Var}\,K^{(1)}_t(1)=\infty$ alone ensures that $K^{(1)}_n(1)$, centered with its mean and normalized with its standard deviation, satisfies a one-dimensional central limit theorem.

Condition \eqref{deHaan1} used both here, in \cite{Iksanov+Kabluchko+Kotelnikova:2021} and, in an equivalent form, in \cite{Barbour+Gnedin:2009} is a strengthening of~\eqref{karl} with $\alpha=0$ which secures functional convergence in our main results. We do not claim that condition~\eqref{deHaan1} is optimal. However, we think that it represents a reasonable compromise between a possible generality and convenience of use.

The remainder of the paper is structured as follows. The main results are proved in Section~\ref{sec:pro} with the help of a number of auxiliary statements collected in Section \ref{sec:aux}.

\section{Auxiliary results}\label{sec:aux}

\subsection{Combinatorial results}

In this section we prove a technical result involving the binomial coefficients (Lemma \ref{binomial}) which is needed for the proofs of Theorem \ref{intZ} and Proposition \ref{cov:fixedlevel}. We first provide a convolution formula for the binomial coefficients, which will be used in the proof of Lemma \ref{binomial}.
\begin{lemma}
For $r,a,n\in\mn$,
\begin{equation}\label{convolution}
\sum_{k=0}^n \binom{a+k}{k}\binom{r+n-k}{n-k}=\binom{a+r+n+1}{n}.
\end{equation}
\end{lemma}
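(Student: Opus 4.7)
The plan is to prove \eqref{convolution} via generating functions, turning the identity into a Cauchy product of two negative-binomial series. The key input is the standard expansion
$$\sum_{k\geq 0}\binom{m+k}{k}x^k=\frac{1}{(1-x)^{m+1}},\qquad m\in\mn_0,\ |x|<1,$$
which is essentially the \textquotedblleft stars and bars\textquotedblright\ generating function. I would apply this identity twice, once with $m=a$ and once with $m=r$, and then multiply the two power series. The Cauchy product gives
$$\frac{1}{(1-x)^{a+1}}\cdot\frac{1}{(1-x)^{r+1}}=\sum_{n\geq 0}\Bigl(\sum_{k=0}^{n}\binom{a+k}{k}\binom{r+n-k}{n-k}\Bigr)x^{n},$$
so the inner sum is exactly the left-hand side of \eqref{convolution}, read off as the coefficient of $x^n$.

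On the other hand, the left-hand side of the displayed equation equals $(1-x)^{-(a+r+2)}$, and a third application of the same negative-binomial expansion (with $m=a+r+1$) identifies its $x^n$-coefficient as $\binom{a+r+n+1}{n}$. Matching the two expressions for the coefficient of $x^n$ yields \eqref{convolution}. I do not expect any real obstacle: the identity is a standard convolution of binomial coefficients, and the only thing to watch is the correct index shift $m\mapsto m+1$ in the exponent of $(1-x)^{-1}$. Should one prefer to avoid generating functions, the identity also admits a direct induction on $n$ via Pascal's rule, with base case $n=0$ giving $\binom{a}{0}\binom{r}{0}=1=\binom{a+r+1}{0}$; I would mention this only as a remark, since the generating-function proof is shorter and self-contained.
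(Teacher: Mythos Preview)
Your proof is correct and follows essentially the same route as the paper: both expand $(1-x)^{-(a+r+2)}$ directly via the negative-binomial series and as the product $(1-x)^{-(a+1)}(1-x)^{-(r+1)}$, then equate the coefficients of $x^n$. The paper cites this expansion as the ``companion binomial theorem'' from \cite{Gould:2016}, but the argument is identical to yours.
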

\begin{proof}
Fix $r,a,n\in\mn$.
For real $x$, $|x|<1$, by the companion binomial theorem (see formula~(2.21) on p. 17 in \cite{Gould:2016}),
\begin{equation}\label{eq:11}
\frac{1}{(1-x)^{a+r+2}}=\sum_{i\ge 0} \binom{a+r+1+i}{i}\, x^i.
\end{equation}
		On the other hand, invoking that theorem once again yields
		\begin{equation}\label{eq:12}
		\frac{1}{(1-x)^{a+r+2}}=\frac{1}{(1-x)^{a+1}}\,\frac{1}{(1-x)^{r+1}}=\sum_{j\ge 0} \binom{a+j}{j}\, x^j\\\sum_{k\ge 0} \binom{r+k}{k}\, x^k.
		\end{equation}
Comparing the coefficients in front of
$x^n$ in \eqref{eq:11} and \eqref{eq:12}, we obtain \eqref{convolution}.
\end{proof}

\begin{lemma}\label{binomial}
For $l\in\mn$, $a>0$ and $b>0$,
\begin{equation*}
\sum_{k=0}^{l-1} \binom{k+l}l \frac{a^kb^l+a^lb^k}{(a+b)^{k+l}(k+l)} = \frac{1}{l}.
\end{equation*}
\end{lemma}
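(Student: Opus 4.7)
The plan is to normalize the sum so that $a$ and $b$ become complementary probabilities, and then interpret each side as a negative binomial tail probability. The elementary identity
\begin{equation*}
\frac{1}{k+l}\binom{k+l}{l} \;=\; \frac{1}{l}\binom{k+l-1}{l-1}
\end{equation*}
lets us factor out $1/l$ on the left-hand side; after multiplying through by $l$ and introducing $p := a/(a+b)$, $q := b/(a+b)$, so that $p,q\in(0,1)$ and $p+q=1$, the claim becomes equivalent to
\begin{equation*}
A_l(p,q) + A_l(q,p) \;=\; 1,\qquad\text{where}\qquad A_l(p,q) := \sum_{k=0}^{l-1}\binom{k+l-1}{l-1}\, q^l p^k.
\end{equation*}

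The key observation is that $\binom{k+l-1}{l-1}\, q^l p^k$ is the probability mass function of the negative binomial distribution: it is the probability that, in an i.i.d.\ Bernoulli$(q)$ sequence, the $l$-th success occurs exactly at trial $k+l$. Consequently $A_l(p,q)$ is the probability that the $l$-th success has occurred by trial $2l-1$, which by the standard binomial / negative-binomial duality $\{T_l\leq n\}=\{S_n\geq l\}$ equals $P(S\geq l)$ with $S\sim\mathrm{Bin}(2l-1,q)$.

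Applying the same interpretation with $p$ and $q$ swapped, and using $2l-1-S\sim \mathrm{Bin}(2l-1,p)$, one gets $A_l(q,p)=P(2l-1-S\geq l)=P(S\leq l-1)$. Adding the two sums therefore yields $P(S\geq l)+P(S\leq l-1)=1$. A purely algebraic variant of the same argument writes each $A_l$ as a binomial tail via the identity $\sum_{k=0}^{l-1}\binom{k+l-1}{l-1}q^l p^k=\sum_{j=l}^{2l-1}\binom{2l-1}{j}q^j p^{2l-1-j}$ (a consequence of the convolution formula \eqref{convolution}) and then invokes the binomial theorem for $(p+q)^{2l-1}$. No substantive difficulty is anticipated; the only step that takes any thought is the reduction to $A_l(p,q)+A_l(q,p)=1$, after which the conclusion is immediate from complementary probabilities.
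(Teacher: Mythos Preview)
Your argument is correct. Both you and the paper perform the same first reduction via $\frac{1}{k+l}\binom{k+l}{l}=\frac{1}{l}\binom{k+l-1}{l-1}$, arriving (after normalising by $(a+b)^{2l-1}$) at the identity $A_l(p,q)+A_l(q,p)=1$. From there the two proofs diverge: the paper expands $(a+b)^{l-k-1}$ by the binomial theorem, interchanges the order of summation, and applies the convolution formula \eqref{convolution} to collapse the inner sums to $\binom{2l-1}{m}$, finally reconstituting $(a+b)^{2l-1}$. Your main route instead interprets $A_l(p,q)$ as the negative-binomial tail $P(T_l\le 2l-1)=P(S\ge l)$ with $S\sim\mathrm{Bin}(2l-1,q)$, whence $A_l(q,p)=P(S\le l-1)$ by symmetry and the two pieces are complementary. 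This is shorter and more conceptual, trading the explicit combinatorics for a standard probabilistic duality; the paper's approach is purely algebraic and self-contained. Your parenthetical ``algebraic variant'' is in fact precisely the paper's argument: the identity $\sum_{k=0}^{l-1}\binom{k+l-1}{l-1}q^lp^k=\sum_{j=l}^{2l-1}\binom{2l-1}{j}q^jp^{2l-1-j}$ is exactly what their expansion and use of \eqref{convolution} establish.
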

\begin{proof}
Fix $l\in\mn$ and $a,b>0$.
Since, for $k\in\mn$, $\binom{k+l}l=\frac{k+l}{l}\binom{k+l-1}{l-1}$, the identity in question is equivalent to
$$ \sum_{k=0}^{l-1} \binom{k+l-1}{l-1} (a^kb^l+a^lb^k) (a+b)^{l-k-1} = (a+b)^{2l-1}.$$
By the binomial theorem,
the left-hand side is equal to
		$$ \sum_{k=0}^{l-1} \sum_{i=0}^{l-k-1} \binom{k+l-1}{l-1} \binom{l-k-1}{i} a^{l-i-1}b^{l+i} + \sum_{k=0}^{l-1} \sum_{j=0}^{l-k-1} \binom{k+l-1}{l-1} \binom{l-k-1}{j} a^{l+j}b^{l-j-1}.$$
Changing the order of summation yields
		$$ \sum_{i=0}^{l-1} a^{l-i-1}b^{l+i} \sum_{k=0}^{l-i-1} \binom{k+l-1}{l-1} \binom{l-k-1}{i} +  \sum_{j=0}^{l-1} a^{l+j}b^{l-j-1} \sum_{k=0}^{l-j-1} \binom{k+l-1}{l-1} \binom{l-k-1}{j}.$$
Putting $m:=l-i-1$ and $s:=l-j-1$ we obtain
$$ \sum_{m=0}^{l-1} a^{m}b^{2l-m-1} \sum_{k=0}^{m} \binom{k+l-1}{l-1} \binom{l-k-1}{l-m-1} +  \sum_{s=0}^{l-1} a^{2l-s-1}b^{s} \sum_{k=0}^{s} \binom{k+l-1}{l-1} \binom{l-k-1}{l-s-1}.$$
Using formula \eqref{convolution} with $n=m$, $a=l-1$ and $r=l-m-1$ we infer
$$\sum_{k=0}^{m} \binom{k+l-1}{k} \binom{l-k-1}{m-k} = \binom{2l-1}m.$$ This together with the binomial theorem completes the proof.
\end{proof}

\subsection{A result on de Haan's class $\Pi$}

Lemma \ref{deHaan} links condition \eqref{Barb} appearing on p.~379 in \cite{Barbour+Gnedin:2009} with $\rho\in\Pi_g$, thereby facilitating the proof of Corollary \ref{barb+gne}.

\begin{lemma}\label{deHaan}
Let $g$ be a function slowly varying at $\infty$. The following statements are equivalent:

\noindent (a) $\sum_{k\geq 1}p_k\1_{\{p_k\leq t\}}
~\sim~ tg(1/t)$ as $t\to 0+$;

\noindent (b) $\rho \in\Pi_g$, where, as before, $\rho(t)=\#\{k\in\mn: p_k\geq t^{-1}\}$ for $t>0$.

\end{lemma}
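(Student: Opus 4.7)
The plan is to first translate condition (a) into a statement purely about $\rho$ and then recognize the latter as a known Abelian/Tauberian characterization of the class $\Pi_g$. Put $A(t):=\sum_{k\geq 1}p_k\ind_{\{p_k\leq t\}}$ and $N(s):=\#\{k:p_k\geq s\}$, so that $N(1/u)=\rho(u)$ up to countably many jumps. The layer-cake representation yields
\begin{equation*}
A(t)=\int_0^t\big(N(s)-N(t)\big)\,\dd s.
\end{equation*}
Substituting $s=1/u$ and multiplying by $T:=1/t$, and then using Fubini, one obtains the two equivalent forms
\begin{equation*}
T A(1/T)=\int_1^\infty\big(\rho(Tv)-\rho(T)\big)v^{-2}\,\dd v=T\int_{[T,\infty)}s^{-1}\,\dd\rho(s).
\end{equation*}
Since $\int_1^\infty v^{-2}\,\dd v=1$, condition (a) is equivalent to the integral asymptotic
\begin{equation*}
J(T):=\int_1^\infty\big(\rho(Tv)-\rho(T)\big)v^{-2}\,\dd v\sim g(T),\quad T\to\infty.\qquad(\ast)
\end{equation*}

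For the implication (b)$\Rightarrow$(a), I would assume $\rho\in\Pi_g$, so that $(\rho(Tv)-\rho(T))/g(T)\to\log v$ pointwise in $v\geq 1$. The uniform convergence theorem for the class $\Pi$ (Theorem 3.1.16 in \cite{Bingham+Goldie+Teugels:1989}) upgrades this to uniform convergence on compact subsets of $[1,\infty)$, while the Potter-type bounds available for $\Pi_g$ supply a dominating function of the form $C(v^{\varepsilon}+1)$, $v\geq 1$, with $\varepsilon<1$. This dominating function is $v^{-2}$-integrable on $[1,\infty)$, so dominated convergence produces $J(T)/g(T)\to\int_1^\infty(\log v)/v^2\,\dd v=1$, i.e.\ $(\ast)$.

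For the converse implication (a)$\Rightarrow$(b), which I expect to be the main obstacle, I would rely on the alternative representation $J(T)=Tm(T)$ with $m(T):=\int_{[T,\infty)}s^{-1}\,\dd\rho(s)$. From $(\ast)$ together with the slow variation of $g$, one obtains $T\big(m(T)-m(\lambda T)\big)\sim g(T)(1-1/\lambda)$. Because $\rho$ is nondecreasing, $T\leq s\leq\lambda T$ on the interval of integration, so sandwiching gives the crude bounds
\begin{equation*}
(1-1/\lambda)g(T)\big(1+o(1)\big)\leq\rho(\lambda T)-\rho(T)\leq(\lambda-1)g(T)\big(1+o(1)\big),\quad\lambda>1.
\end{equation*}
To upgrade these to the sharp asymptotic $\rho(\lambda T)-\rho(T)\sim g(T)\log\lambda$ required by $\rho\in\Pi_g$, I would run a standard iteration/chaining argument: for $\mu>1$ close to $1$, set $n:=\lfloor\log\lambda/\log\mu\rfloor$ and telescope across the geometric partition $T,\mu T,\mu^2 T,\ldots,\mu^n T$. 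Applying the crude bounds on each block and using slow variation of $g$ to replace $g(\mu^k T)$ by $g(T)(1+o(1))$ uniformly in $k\leq n+1$, one obtains sandwich bounds
\begin{equation*}
n(1-1/\mu)g(T)(1+o(1))\leq\rho(\mu^n T)-\rho(T)\leq n(\mu-1)g(T)(1+o(1)),
\end{equation*}
and similarly with $n+1$ instead of $n$ when passing to the full interval $[T,\lambda T]$. Letting $\mu\downarrow 1$ along a sequence with $n\log\mu\to\log\lambda$, both $n(1-1/\mu)$ and $(n+1)(\mu-1)$ converge to $\log\lambda$, which pins down the asymptotic. The case $0<\lambda<1$ is handled symmetrically. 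This chaining argument is the only delicate step, since it is exactly where the monotonicity of $\rho$ and the slow variation of $g$ combine to promote linear-in-$(\lambda-1)$ bounds into the logarithmic behavior characteristic of $\Pi_g$.
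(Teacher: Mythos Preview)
Your proposal is correct, and the reduction you perform---rewriting condition (a) as $J(T)=-\rho(T)+T\int_T^\infty u^{-2}\rho(u)\,\dd u\sim g(T)$---is exactly the key identity the paper uses (obtained there by a one-line integration by parts). The paper, however, does not prove the equivalence between this integral relation and $\rho\in\Pi_g$; it simply invokes Theorem~3.7.1 in \cite{Bingham+Goldie+Teugels:1989}, which states precisely that equivalence for nondecreasing $\rho$. What you do in both directions---dominated convergence with Potter-type bounds for (b)$\Rightarrow$(a), and the geometric telescoping with monotonicity for (a)$\Rightarrow$(b)---is essentially a self-contained reproof of that theorem in this special case. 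Your argument has the virtue of being elementary and not relying on the de Haan machinery as a black box, while the paper's route is much shorter and highlights that the lemma is just a reformulation of a standard Karamata--de Haan characterization. Both approaches arrive at the same intermediate object $(\ast)$, so the difference is purely in how the Abel/Tauber step is handled.
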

\begin{proof}
Integrating by parts and invoking Lemma 3 in \cite{Karlin:1967} yields $$\sum_{k\geq 1}p_k\1_{\{p_k\leq t\}}
=\int_{[t^{-1},\infty)} u^{-1}{\rm d}\rho(u)=-t\rho(t^{-1})+\int_{t^{-1}}^\infty u^{-2}
\rho(u){\rm d}u \, \sim t g(1/t), \quad t\to 0+.
$$
Changing the variable $y=1/t$ and then multiplying
by $y$ we obtain $$-\rho(y)+y\int_{y}^\infty u^{-2}
\rho(u){\rm d}u\, \sim~ g(y), \quad y\to\infty.$$ According to Theorem 3.7.1 in \cite{Bingham+Goldie+Teugels:1989}, the latter asymptotic relation is equivalent to $\rho\in\Pi_g$.
\end{proof}

\subsection{Results about $K_t^{*(j)}(l)$}

In Proposition \ref{exact} we provide the first-order asymptotics of $\me K^{*(j)}_{t}(l)$, where $K^{*(j)}_{t}(l)$ is the number of the $j$th generation boxes which contain exactly $l$ balls in the Poissonised scheme at time $t$.
\begin{assertion}\label{exact}
Under the assumptions of Theorem \ref{main}, for $j,l\in\mn$,
$$\me K^{*(j)}_{t}(l)~\sim~\frac{c_jg_j(t)}{l},\quad t\to\infty,$$ where $c_j$ is as defined in \eqref{eq:cj} and for large $t$
\begin{equation}\label{eq:def_g}
	g_j(t):=(\log t)^{j\beta+j-1}(\ell(\log t))^j.
\end{equation}
\end{assertion}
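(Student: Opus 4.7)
The plan is to recast $\me K^{*(j)}_t(l)$ as a single integral against the $j$th-generation counting function $\rho_j(s):=\#\{|\rr|=j:\,p_{\rr}\ge 1/s\}$, center the kernel so that the leading-order behaviour is captured by a de Haan increment, and then apply Proposition~4.3 of \cite{Iksanov+Kabluchko+Kotelnikova:2021}, which places $\rho_j$ in $\Pi$ with auxiliary function $c_jg_j$. Since at a fixed generation the Poissonized counts in distinct boxes are independent,
\begin{equation*}
\me K^{*(j)}_t(l)=\sum_{|\rr|=j}\psi_l(tp_{\rr}),\qquad \psi_l(x):=\frac{x^l\eee^{-x}}{l!}.
\end{equation*}
Writing this sum as a Stieltjes integral against $-\dd\rho_j(t/\cdot)$, integrating by parts (the boundary terms vanish for $l\ge 1$ because $\psi_l(0)=\psi_l(\infty)=0$ and $\rho_j(u)/u^l\to 0$ as $u\to\infty$), using $\psi_l'=\psi_{l-1}-\psi_l$, and then substituting $y=\eee^{-v}$ together with the identities $y\psi_{l-1}(y)=l\psi_l(y)$ and $y\psi_l(y)=(l+1)\psi_{l+1}(y)$, I would arrive at
\begin{equation*}
\me K^{*(j)}_t(l)=\int_{-\infty}^\infty K_l(v)\,\rho_j(t\eee^v)\dd v,\qquad K_l(v):=l\psi_l(\eee^{-v})-(l+1)\psi_{l+1}(\eee^{-v}).
\end{equation*}

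An elementary calculation yields $\int_{-\infty}^\infty\psi_m(\eee^{-v})\dd v=1/m$ for $m\ge 1$, so $\int K_l(v)\dd v=0$ and the identity may be rewritten in the centered form
\begin{equation*}
\me K^{*(j)}_t(l)=\int_{-\infty}^\infty K_l(v)\,\bigl(\rho_j(t\eee^v)-\rho_j(t)\bigr)\dd v.
\end{equation*}
By Proposition~4.3 of \cite{Iksanov+Kabluchko+Kotelnikova:2021}, $(\rho_j(t\eee^v)-\rho_j(t))/g_j(t)\to c_j v$ as $t\to\infty$ for every $v\in\mr$. To pass to the limit under the integral I would rely on a Potter-type bound for the class~$\Pi$: for every $\delta>0$ there exist $C$ and $t_0$ such that
\begin{equation*}
\Big|\frac{\rho_j(t\eee^v)-\rho_j(t)}{g_j(t)}\Big|\le C\bigl(|v|+\eee^{\delta|v|}\bigr),\qquad t\ge t_0,\;v\in\mr.
\end{equation*}
The super-exponential decay of $K_l$ as $v\to-\infty$ (due to the factor $\exp(-\eee^{-v})$ present in each $\psi_m(\eee^{-v})$) and its exponential decay as $v\to+\infty$ make this majorant integrable against $|K_l|$, so dominated convergence gives
\begin{equation*}
\lim_{t\to\infty}\frac{\me K^{*(j)}_t(l)}{c_jg_j(t)}=\int_{-\infty}^\infty vK_l(v)\dd v.
\end{equation*}

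A short computation closes the proof. The substitution $y=\eee^{-v}$ combined with $\int_0^\infty y^{m-1}\eee^{-y}\log y\dd y=\Gamma'(m)=(m-1)!(H_{m-1}-\gamma)$, where $H_m$ is the $m$th harmonic number and $\gamma$ the Euler--Mascheroni constant, yields $\int_{-\infty}^\infty v\psi_m(\eee^{-v})\dd v=(\gamma-H_{m-1})/m$. Hence
\begin{equation*}
\int_{-\infty}^\infty vK_l(v)\dd v=(\gamma-H_{l-1})-(\gamma-H_l)=H_l-H_{l-1}=\frac{1}{l},
\end{equation*}
and the claimed asymptotics $\me K^{*(j)}_t(l)\sim c_jg_j(t)/l$ follows. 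The main technical obstacle will be the dominated convergence step: while the de Haan convergence is pointwise, the required uniform-in-$v$ Potter envelope for $\rho_j\in\Pi_{c_jg_j}$ must be read off from the $\Pi$-class machinery of Section~3.7 of \cite{Bingham+Goldie+Teugels:1989}, or reproved in the monotone setting directly along the lines of the estimates already used in \cite{Iksanov+Kabluchko+Kotelnikova:2021}.
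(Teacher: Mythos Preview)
Your argument is correct, but the paper's proof is considerably shorter and takes a different route. The paper observes that $\me K^{*(j)}_t(l)=(-1)^{l+1}(t^l/l!)\,\Phi_j^{(l)}(t)$, where $\Phi_j(t)=\me K^{(j)}_t(1)$, so the claim is equivalent to $(-1)^{l+1}\Phi_j^{(l)}(t)\sim c_j(l-1)!\,g_j(t)/t^l$. The case $l=1$ is exactly Proposition~4.4 of \cite{Iksanov+Kabluchko+Kotelnikova:2021}, and the inductive step follows from the identity $(-1)^l\Phi_j^{(l-1)}(t)=\int_t^\infty(-1)^{l+1}\Phi_j^{(l)}(y)\,\dd y$ together with the monotone density theorem for slowly varying functions. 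No integral representation against $\rho_j$, no centering, and no Potter-type envelope are needed.

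Your direct approach has its own merits: it handles all $l$ uniformly without induction, it makes the $1/l$ constant emerge from an explicit harmonic-number computation, and the kernel representation against $\rho_j(t\eee^v)-\rho_j(t)$ would transfer to other smoothings of $\rho_j$. The cost is the dominated-convergence justification you flag at the end: the $\Pi$-class Potter bound (e.g.\ Theorem~3.8.6 in \cite{Bingham+Goldie+Teugels:1989}) controls the integrand only where both $t$ and $t\eee^v$ exceed some threshold, so you must split off the region $v<-\log(t/t_0)$ and use the super-exponential decay of $K_l$ there together with crude growth bounds on $\rho_j$ and $g_j$. This is routine but adds length that the paper's three-line induction avoids.
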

\begin{proof}
Fix $j\in\mn$ and put $\Phi_j(t):=\me K^{(j)}_{t} = \sum_{|{\rr}|=j} (1 - \eee^{-t p_{\rr}})$ for $t\geq 0$. As
\begin{equation}\label{eq:formula}
\me K^{*(j)}_{t}(l) = \sum_{|{\rr}|=j} \eee^{-p_{\rr}t}\frac{(p_{\rr}t)^l}{l!},\quad l\in\mn,~~ t\geq 0,
\end{equation}
we infer
\begin{equation*}\label{eq:*}
\me K^{*(j)}_{t}(l) = (-1)^{l+1} \frac{t^l}{l!} \Phi^{(l)}_j(t),\quad l\in\mn,~~ t\geq 0,
\end{equation*}
where $\Phi^{(l)}_j$ denotes the $l$th derivative of $\Phi_j$. Thus, it suffices to prove that, for each $l\in\mn$,
\begin{equation*}
(-1)^{l+1} \Phi^{(l)}_j(t)~\sim~c_j\frac{g_j(t)}{t^l}(l-1)!,\quad t\to\infty
\end{equation*}

We shall use mathematical induction on $l$.
The case $l=1$ has been settled
in Proposition~4.4 of \cite{Iksanov+Kabluchko+Kotelnikova:2021}. Assume now that,
for $l\ge 2$, $$(-1)^{l} \Phi^{(l-1)}_j(t)~\sim~c_j\frac{g_j(t)}{t^{l-1}}(l-2)!,\quad t\to\infty.$$ Noting that\footnote{Observe that this equality does not hold for $l=1$.} $(-1)^{l} \Phi^{(l-1)}_j(t)=\int_t^\infty (-1)^{l+1} \Phi^{(l)}_j(y) \rm dy$ for $t\geq 0$ and that $g_j$ is a slowly varying function, and applying a version of the monotone density theorem (the remark following Theorem~1.7.2 in \cite{Bingham+Goldie+Teugels:1989}) completes the proof.
\end{proof}

We proceed by discussing in more details an advantage of the Poissonized version over the deterministic version, which stems from the {\it thinning property of Poisson processes}. For $t\geq 0$, denote by $\pi_{\rr}(t)$ the number of balls in the box~$\rr$ of the Poissonized version at time $t$ (equivalently, with $\pi(t)$ balls). Then, for $j\in\mn$ and $t\geq 0$, $\pi(t)=\sum_{|{\rr}|=j}\pi_{\rr}(t)$, where the summands are independent, and $\pi_{\rr}:=(\pi_{\rr}(t))_{t\geq 0}$ is a Poisson process of intensity $p_{\rr}$. These facts will be repeatedly exploited in what follows.

Proposition \ref{x} is only needed to justify a statement in Remark \ref{variance}. In particular, it is not used in the proofs of our main results. We recall the standard notation $x\wedge y=\min(x,y)$ and $x\vee y=\max(x,y)$ for $x,y\in\mr$.
\begin{assertion}\label{x}
Under the assumptions of Theorem \ref{main}, for $u, v\in\mr$, and $j,l\in\mn$, $$\lim_{T\to\infty} \frac{{\rm Cov \,} (K_{\eee^{T+u}}^{*(j)}(l), K_{\eee^{T+v}}^{*(j)}(l))}{c_jf_j(T)}=\frac{\eee^{-|u-v|l}}{l}-\frac{(2l-1)!\eee^{-|u-v|l}}{(l!)^2 (1+\eee^{-|u-v|})^{2l}}, \quad T\to\infty,$$ where $c_j$ and $f_j$ are as defined in \eqref{eq:cj}.
\end{assertion}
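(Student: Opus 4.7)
The plan is to derive an exact identity expressing ${\rm Cov}(K^{*(j)}_s(l),K^{*(j)}_t(l))$ in terms of derivatives of $\Phi_j(t):=\me K^{(j)}_t=\sum_{|\rr|=j}(1-\eee^{-p_\rr t})$, and then to pass to the limit using an asymptotic for $\Phi_j^{(m)}$ already implicit in the proof of Proposition~\ref{exact}. By symmetry of covariance I may assume $u\le v$ and put $s:=\eee^{T+u}$, $t:=\eee^{T+v}$. First I would invoke the thinning property to obtain independence of the single-box counts $(\pi_\rr)_{|\rr|=j}$ across $\rr$, which yields
$${\rm Cov}(K^{*(j)}_s(l),K^{*(j)}_t(l))=\sum_{|\rr|=j}\bigl[\mmp(\pi_\rr(s)=l,\pi_\rr(t)=l)-\mmp(\pi_\rr(s)=l)\mmp(\pi_\rr(t)=l)\bigr].$$
For $s\le t$, independence of $\pi_\rr(s)$ and $\pi_\rr(t)-\pi_\rr(s)$ shows $\mmp(\pi_\rr(s)=l,\pi_\rr(t)=l)=(p_\rr s)^l\eee^{-p_\rr t}/l!$, while $\mmp(\pi_\rr(s)=l)\mmp(\pi_\rr(t)=l)=(p_\rr s)^l(p_\rr t)^l\eee^{-p_\rr(s+t)}/(l!)^2$. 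Summing over $\rr$ via the identity $\sum_{|\rr|=j}p_\rr^m\eee^{-p_\rr t}=(-1)^{m+1}\Phi_j^{(m)}(t)$, I obtain the exact formula
\begin{equation*}
{\rm Cov}(K^{*(j)}_s(l),K^{*(j)}_t(l))=\frac{(-1)^{l+1}s^l}{l!}\Phi_j^{(l)}(t)+\frac{(st)^l}{(l!)^2}\Phi_j^{(2l)}(s+t).
\end{equation*}

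The second step is to extend the asymptotic of Proposition~\ref{exact} to arbitrary derivatives: for every $m\in\mn$,
\begin{equation*}
(-1)^{m+1}\Phi_j^{(m)}(t)\sim c_j\,g_j(t)\,t^{-m}(m-1)!,\qquad t\to\infty,
\end{equation*}
with $c_j$ from \eqref{eq:cj} and $g_j$ from \eqref{eq:def_g}. The base case $m=1$ is Proposition~4.4 of \cite{Iksanov+Kabluchko+Kotelnikova:2021}; the inductive step is the one already used in the proof of Proposition~\ref{exact}, since $(-1)^{m+1}\Phi_j^{(m)}(t)=\sum_{|\rr|=j}p_\rr^m\eee^{-p_\rr t}$ is positive and decreasing in $t$, satisfies $\int_t^\infty(-1)^{m+1}\Phi_j^{(m)}(y)\dd y=(-1)^m\Phi_j^{(m-1)}(t)$ for $m\ge 2$, and the monotone density theorem (the remark following Theorem~1.7.2 in \cite{Bingham+Goldie+Teugels:1989}) lifts the asymptotic from order $m-1$ to order $m$. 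Inserting $m=l$ and $m=2l$, using slow variation of $g_j\circ\exp$ to conclude $g_j(t)/f_j(T)\to 1$ and $g_j(s+t)/f_j(T)\to 1$ as $T\to\infty$, together with $s/t=\eee^{-|u-v|}$ and $(st)^l/(s+t)^{2l}=\eee^{-l|u-v|}/(1+\eee^{-|u-v|})^{2l}$, I obtain
\begin{equation*}
\frac{1}{c_jf_j(T)}\cdot\frac{(-1)^{l+1}s^l}{l!}\Phi_j^{(l)}(t)\to\frac{\eee^{-l|u-v|}}{l},\qquad\frac{1}{c_jf_j(T)}\cdot\frac{(st)^l}{(l!)^2}\Phi_j^{(2l)}(s+t)\to-\frac{(2l-1)!\,\eee^{-l|u-v|}}{(l!)^2(1+\eee^{-|u-v|})^{2l}}.
\end{equation*}
Adding the two limits delivers the proposition.

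The only step that is not a mechanical substitution is the inductive extension of the Proposition~\ref{exact}-style asymptotic beyond the first derivative; but this is essentially a repeated application of the monotone density theorem already exploited there, leveraging the transparent positivity and monotonicity of $(-1)^{m+1}\Phi_j^{(m)}(t)$, so it presents no essential difficulty.
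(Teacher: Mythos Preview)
Your proof is correct and follows essentially the same approach as the paper: both reduce the covariance to a single-box sum via thinning, compute the Poisson probabilities, and pass to the limit using the asymptotic of Proposition~\ref{exact}. The only cosmetic difference is that the paper packages the two sums as $\me K^{*(j)}_{(a\vee b)z}(l)$ and $\me K^{*(j)}_{(a+b)z}(2l)$ and applies Proposition~\ref{exact} directly, whereas you write them as $\Phi_j^{(l)}$ and $\Phi_j^{(2l)}$ and invoke the derivative asymptotic established within that proposition's proof; since $\me K^{*(j)}_t(m)=(-1)^{m+1}\frac{t^m}{m!}\Phi_j^{(m)}(t)$, the two presentations are identical.
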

\begin{proof}
Fix positive integers $j$ and $l$.
\begin{multline*}
K_s^{*(j)}(l)K_t^{*(j)}(l)=\sum_{|{\rr}_1|=j}\1_{\{\pi_{\rr_1}(s)=l \}}\sum_{|{\rr}_2|=j}\1_{\{\pi_{\rr_2}(t)=l \}} \\
=\sum_{|{\rr}|=j}\1_{\{\pi_{\rr}(s)=\pi_{\rr}(t)= l\}}+\sum_{|{\rr}_1|=j}\1_{\{\pi_{\rr_1}(s)= l \}}\sum_{|{\rr}_2|=j,\,\rr_2\neq \rr_1}\1_{\{\pi_{\rr_2}(t)= l\}}.
\end{multline*}
Since the random variables $\1_{\{\pi_{\rr_1}(s)= l \}}$ and $\sum_{|{\rr}_2|=j,\,\rr_2\neq \rr_1}\1_{\{\pi_{\rr_2}(t)= l\}}$ are independent (by the thinning property of Poisson processes), we infer
\begin{multline*}
{\rm Cov}\,(K_s^{*(j)}(l), K_t^{*(j)}(l))=\sum_{|{\rr}|=j}\Big(\mmp\{\pi_{\rr}(s\wedge t)=l,\pi_{\rr}(s\vee t)-\pi_{\rr}(s\wedge t)= 0\}-\mmp\{\pi_{\rr}(s)= l\}\mmp\{\pi_{\rr}(t)= l\}\Big)\\
			=\sum_{|{\rr}|=j} \left(\eee^{-p_{\rr}(s\wedge t)} \frac{(p_{\rr}(s\wedge t))^l}{l!} \eee^{-p_{\rr}|t-s|}  - \eee^{-p_{\rr}s} \frac{(p_{\rr}s)^l}{l!}\eee^{-p_{\rr}t} \frac{(p_{\rr}t)^l}{l!}\right).
		\end{multline*}
To justify the second equality, observe that since $\pi_{\rr}$ is a Poisson process, the random variable $\pi_{\rr}(s\vee t)-\pi_{\rr}(s\wedge t)$ has the same
distribution as $\pi_{\rr}((s\vee t) - (s\wedge t))=\pi_r(|t-s|)$ and is independent of $\pi_{\rr}(s\wedge t)$.

For $z\geq 0$ and positive $a,b$, put $s=az$ and $t=bz$. Then, in view of \eqref{eq:formula},
$${\rm Cov}\,(K_{az}^{*(j)}(l), K_{bz}^{*(j)}(l))=\me K_{(a\vee b)z}^{*(j)}(l)\,\frac{(a\wedge b)^l}{(a\vee b)^l}-\me K_{(a+b)z}^{*(j)}(2l)\,\frac{a^lb^l}{(a+b)^{2l}}\,\frac{(2l)!}{(l!)^2}.$$ Invoking Proposition \ref{exact} in combination with slow variation of $g_j$ yields $${\rm Cov}\,(K_{az}^{*(j)}(l), K_{bz}^{*(j)}(l))~\sim~ \Big(\frac{(a\wedge b)^l}{l\,(a\vee b)^l}-\frac{(2l-1)!}{(l!)^2}\frac{(\frac{a\wedge b}{a\vee b})^l}{(1+\frac{a\wedge b}{a\vee b})^{2l}}\Big)c_jg_j(z), \quad z\to\infty.$$ Putting $z:=\eee^T$, $a:=\eee^u$ and $b:=\eee^v$ and recalling that $f_j(T)=g_j(\eee^T)$ completes the proof.
\end{proof}

\subsection{Results about $K_t^{(j)}(l)$}

Fix any $l\in\mn$. In Proposition \ref{cov:fixedlevel} we identify the covariance functions of the limit process $Z_l$. One consequence of the Proposition is that
the process $Z_l$ is wide-sense stationary. It will be shown in the proof of Theorem \ref{main} that $Z_l$ is a Gaussian process. Since a wide-sense stationary Gaussian process is also strict-sense stationary, we conclude that the process $Z_l$ is strict-sense stationary.

\begin{assertion}\label{cov:fixedlevel}
Under the assumptions of Theorem \ref{main}, for $u,v\in\mr$ and $j,l\in\mn$,
\begin{equation}\label{eq:cov:fixedlevel}
\lim_{T\to\infty}\frac{{\rm Cov}\,(K^{(j)}_{\eee^{T+u}}(l), K^{(j)}_{\eee^{T+v}}(l))}{c_jf_j(T)}=\log(1+\eee^{-|u-v|})-\sum_{k=1}^{l-1} \frac{(2k-1)! \eee^{-|u-v|k}}{(k!)^2 (1+\eee^{-|u-v|})^{2k}}, \quad T\to\infty,
\end{equation}
where $c_j$ and $f_j$ are as defined in \eqref{eq:cj}.
\end{assertion}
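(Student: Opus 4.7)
The plan is a direct Poissonian computation, generalizing the approach used for $l = 1$ in \cite{Iksanov+Kabluchko+Kotelnikova:2021}. Take $u \leq v$ without loss of generality, set $s := \eee^{T+u}$, $t := \eee^{T+v}$, and $w := \eee^{u-v} \in (0,1]$. By the thinning property, the indicators $\1_{\{\pi_\rr(\cdot) \geq l\}}$ are independent across distinct $\rr$, and monotonicity of $\pi_\rr$ forces $\{\pi_\rr(s) \geq l\} \subseteq \{\pi_\rr(t) \geq l\}$. Together these reduce the covariance to the diagonal sum
\[
{\rm Cov}\,(K^{(j)}_s(l), K^{(j)}_t(l)) = \sum_{|\rr|=j}\mmp(\pi_\rr(s) \geq l)\,\mmp(\pi_\rr(t) < l).
\]

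I would next exploit the gamma representations $\mmp(\pi_\rr(s) \geq l) = \int_0^s p_\rr^l x^{l-1}\eee^{-p_\rr x}/(l-1)!\,\dd x$ and $\mmp(\pi_\rr(t) < l) = \int_t^\infty p_\rr^l y^{l-1}\eee^{-p_\rr y}/(l-1)!\,\dd y$, coming from the fact that the $l$-th arrival time of $\pi_\rr$ is $\Gamma(l, p_\rr)$. Fubini then recasts the sum as
\[
\frac{1}{((l-1)!)^2}\int_0^s\int_t^\infty (xy)^{l-1}\Psi_j(x+y)\,\dd x\,\dd y, \qquad \Psi_j(z) := \sum_{|\rr|=j}p_\rr^{2l}\eee^{-p_\rr z},
\]
and Proposition \ref{exact} applied to the identity $\me K^{*(j)}_z(2l) = z^{2l}\Psi_j(z)/(2l)!$ yields $\Psi_j(z) \sim (2l-1)!\,c_j\,g_j(z)/z^{2l}$ as $z \to \infty$. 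Rescaling $x = ty'$, $y = tz'$ maps the region to $(y', z') \in [0, w] \times [1, \infty)$ with integrand proportional to $(y'z')^{l-1}g_j(t(y'+z'))/(y'+z')^{2l}$. Slow variation of $g_j$ gives pointwise convergence of this ratio to $g_j(t)$ times the limiting integrand, while Potter's bounds combined with the tail estimate $(y'z')^{l-1}/(y'+z')^{2l} \leq z'^{-l-1}$ supply an integrable majorant. Dominated convergence and $g_j(t) \sim f_j(T)$ then deliver
\[
\lim_{T \to \infty}\frac{{\rm Cov}\,(K^{(j)}_s(l), K^{(j)}_t(l))}{c_j f_j(T)} = \frac{(2l-1)!}{((l-1)!)^2}\int_0^w\int_1^\infty \frac{(y'z')^{l-1}}{(y'+z')^{2l}}\,\dd y'\,\dd z'.
\]

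The most delicate step is then to identify this double integral with the right-hand side of \eqref{eq:cov:fixedlevel}. Substituting $\alpha := y'/(y'+z')$, $r := y'+z'$, and then $\tau := \alpha/(1-\alpha)$, reduces the integral to $I_l(w) := \int_0^w \tau^{l-1}(1+\tau)^{-2l}\log(w/\tau)\,\dd \tau$. The equivalent identity
\[
\frac{(2l-1)!}{((l-1)!)^2} I_l(w) = \log(1+w) - \sum_{k=1}^{l-1}\frac{(2k-1)!w^k}{(k!)^2(1+w)^{2k}}
\]
holds trivially at $w = 0$, and I would verify it by matching derivatives in $w$: a Beta-type substitution converts the derivative of the LHS to $\frac{(2l-1)!}{((l-1)!)^2 w}\int_0^{w/(1+w)}u^{l-1}(1-u)^{l-1}\,\dd u$, while the derivative of the RHS can be brought to the same form by using the closed form $\sum_{k \geq 0}\binom{2k+1}{k}(w/(1+w)^2)^k = (1+w)^2/(1-w)$, valid for $w \in (0,1)$. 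This final integral evaluation is the main obstacle of the proof: at bottom it encodes the nontrivial analytic identity $\sum_{k\geq 1}(2k-1)!\,w^k/((k!)^2(1+w)^{2k}) = \log(1+w)$ (which can itself be checked by differentiating in $w$ and simplifying to $1/(1+w)$). A secondary but nontrivial technical burden is the justification of the dominated-convergence step in the presence of the slowly varying factor $g_j$.
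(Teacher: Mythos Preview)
Your argument is correct and reaches the result by a route genuinely different from the paper's. The paper proceeds by induction on $l$: it takes the case $l=1$ from \cite{Iksanov+Kabluchko+Kotelnikova:2021}, then computes the increment ${\rm Cov}(K_{at}^{(j)}(l+1),K_{bt}^{(j)}(l+1))-{\rm Cov}(K_{at}^{(j)}(l),K_{bt}^{(j)}(l))$ directly from Poisson probabilities, uses Proposition~\ref{exact} to find its limit, and invokes the combinatorial Lemma~\ref{binomial} to collapse the resulting sum to $-\frac{(2l-1)!(a/b)^l}{(l!)^2(1+a/b)^{2l}}$. No dominated-convergence step with slowly varying functions is needed; the analytic burden is entirely carried by that binomial identity.

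Your approach instead absorbs all $l$ at once into the gamma representation of Poisson tails, producing the clean limit $\frac{(2l-1)!}{((l-1)!)^2}\int_0^w\int_1^\infty (y'z')^{l-1}(y'+z')^{-2l}\,\dd y'\,\dd z'$. This is elegant and self-contained as far as the covariance goes (it does not rely on the $l=1$ case from the previous paper, though it still uses Proposition~\ref{exact}), but it shifts the difficulty into two places: the Potter-bound dominated-convergence argument, which you correctly flag as nontrivial, and the evaluation of $I_l(w)$, which at bottom hides the same combinatorics that the paper isolates in Lemma~\ref{binomial}. Indeed, using the identity $\sum_{k\ge1}\frac{(2k-1)!w^k}{(k!)^2(1+w)^{2k}}=\log(1+w)$ your target becomes $\frac{(2l-1)!}{((l-1)!)^2}I_l(w)=\sum_{k\ge l}\frac{(2k-1)!w^k}{(k!)^2(1+w)^{2k}}$, and verifying this by derivative matching works (I checked $l=1,2$ and the inductive step reduces to an incomplete-beta recursion), but is comparable in effort to Lemma~\ref{binomial}. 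So the two proofs trade a combinatorial lemma for an integral identity plus a dominated-convergence justification; neither is obviously shorter, but yours gives a pleasant closed-form integral for the limiting covariance.
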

\begin{proof}

Fix $j,l\in\mn$. Arguing as in the proof of Proposition \ref{x}, we conclude that
\begin{multline*}
K_s^{(j)}(l)K_t^{(j)}(l)=\sum_{|{\rr}_1|=j}\1_{\{\pi_{\rr_1}(s)\ge l \}}\sum_{|{\rr}_2|=j}\1_{\{\pi_{\rr_2}(t)\ge l \}} \\
=\sum_{|{\rr}|=j}\1_{\{\pi_{\rr}(s\wedge t)\ge l\}}+\sum_{|{\rr}_1|=j}\1_{\{\pi_{\rr_1}(s)\ge l \}}\sum_{|{\rr}_2|=j,\,\rr_2\neq \rr_1}\1_{\{\pi_{\rr_2}(t)\ge l\}},\quad s,t\geq 0.
\end{multline*}
Since the random variables $\1_{\{\pi_{\rr_1}(s)\ge l \}}$ and $\sum_{|{\rr}_2|=j,\,\rr_2\neq \rr_1}\1_{\{\pi_{\rr_2}(t)\ge l\}}$ are independent, and $\pi_r(u)$ for $u>0$ has a Poisson distribution with mean $up_{\rr}$, we infer, for $s,t\geq 0$, 	\begin{multline*}
{\rm Cov}\,(K_s^{(j)}(l), K_t^{(j)}(l))=\sum_{|{\rr}|=j}\Big(\mmp\{\pi_{\rr}(s\wedge t)\ge l\}-\mmp\{\pi_{\rr}(s)\ge l\}\mmp\{\pi_{\rr}(t)\ge l\}\Big)\\
=\sum_{|{\rr}|=j} \left(\eee^{-p_{\rr}(s\vee t)} \sum_{i=0}^{l-1} \frac{(p_{\rr}(s\vee t))^i}{i!} \left( 1 - \eee^{-p_{\rr}(s\wedge t)} \sum_{k=0}^{l-1} \frac{(p_{\rr}(s\wedge t))^k}{k!}\right)\right)
	\end{multline*}
and thereupon
\begin{multline*}
{\rm Cov}\,(K_s^{(j)}(l+1), K_t^{(j)}(l+1)) - {\rm Cov}\,(K_s^{(j)}(l), K_t^{(j)}(l)) \\
= \sum_{|{\rr}|=j} \eee^{-p_{\rr}(s\vee t)} \frac{(p_{\rr}(s\vee t))^l}{l!} \left( 1 - \eee^{-p_{\rr}(s\wedge t)} \sum_{k=0}^{l} \frac{(p_{\rr}(s\wedge t))^k}{k!}\right) \\
- \sum_{|{\rr}|=j} \eee^{-p_{\rr}(s\wedge t)} \frac{(p_{\rr}(s\wedge t))^l}{l!} \eee^{-p_{\rr}(s\vee t)} \sum_{k=0}^{l-1} \frac{(p_{\rr}(s\vee t))^k}{k!}.
\end{multline*}
Recalling \eqref{eq:formula} we further obtain, for $a,b>0$,
\begin{multline*}
{\rm Cov}\,(K_{at}^{(j)}(l+1), K_{bt}^{(j)}(l+1)) - {\rm Cov}\,(K_{at}^{(j)}(l), K_{bt}^{(j)}(l)) = \me K^{*(j)}_{(a\vee b)t}(l) \\ -\sum_{|{\rr}|=j} \eee^{-p_{\rr}(a+b)t} \left( \sum_{k=0}^{l} \frac{(p_{\rr}t)^{k+l}(a\wedge b)^k(a\vee b)^l}{k! l!} + \sum_{k=0}^{l-1} \frac{(p_{\rr}t)^{k+l}(a\wedge b)^l(a\vee b)^k}{k! l!} \right).
\end{multline*}
Using $$ \frac{1}{l!k!} =  \binom{k+l}{l} \frac{1}{(k+l)!},\quad k\in\mn_0,$$ Proposition \ref{exact} and its consequence $$\sum_{|{\rr}|=j} \eee^{-p_{\rr}(a+b)t} (p_{\rr}t)^n~\sim~\frac{(n-1)!}{(a+b)^n}c_j g_j(t),\quad t\to\infty$$ for $n\in\mn$ we arrive at
\begin{multline}\label{eq:inter1}
\lim_{t\to\infty} \frac{{\rm Cov}\,(K_{at}^{(j)}(l+1), K_{bt}^{(j)}(l+1)) - {\rm Cov}\,(K_{at}^{(j)}(l), K_{bt}^{(j)}(l))}{c_jg_j(t)} \\ = \frac{1}{l} - \sum_{k=0}^{l-1} \binom{k+l}{l} \frac{a^kb^l+a^lb^k}{(a+b)^{l+k} (l+k)} - \binom{2l}{l} \frac{(ab)^l}{2l(a+b)^{2l}}=-\frac{(2l-1)! (a/b)^l}{(l!)^2 (1+a/b)^{2l}}.
\end{multline}
The last equality follows from $$\binom{2l}{l} \frac{(ab)^l}{2l(a+b)^{2l}} = \frac{(2l-1)! (ab)^l}{(l!)^2 (a+b)^{2l}}=\frac{(2l-1)! (a/b)^l}{(l!)^2 (1+a/b)^{2l}}$$ and the fact that, by Lemma \ref{binomial}, $$\frac{1}{l} - \sum_{k=0}^{l-1} \binom{k+l}{l} \frac{a^kb^l+a^lb^k}{(a+b)^{l+k} (l+k)}=0.$$	

Now we are ready to prove \eqref{eq:cov:fixedlevel}. To this end, we use mathematical induction on $l$. Relation~\eqref{eq:cov:fixedlevel}, with $l=1$, is proved in Corollary 4.5 of \cite{Iksanov+Kabluchko+Kotelnikova:2021}. Assume that \eqref{eq:cov:fixedlevel} holds for $l\in\mn$. To conclude that it also holds, with $l+1$ replacing $l$, use the induction assumption and \eqref{eq:inter1} with $t=\eee^T$, $a=\eee^u$ and $b=\eee^v$. The proof of Proposition \ref{cov:fixedlevel} is complete.
\end{proof}

Corollary \ref{var}, in which we provide the first-order asymptotics of ${\rm Var}\,K^{(j)}_{\eee^T}(l)$,
is a specialization of Proposition \ref{cov:fixedlevel} with $u=v=0$.
\begin{cor}\label{var}
Under the assumptions of Theorem \ref{main}, for $j,l\in\mn$,
\begin{equation}\label{eq:var}
\lim_{T\to\infty}\frac{{\rm Var}\,K^{(j)}_{\eee^T}(l)}{c_jf_j(T)}=\log 2-\sum_{k=1}^{l-1} \frac{(2k-1)!}{(k!)^2 2^{2k}}, \quad T\to\infty.
\end{equation}
\end{cor}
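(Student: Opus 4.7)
The plan is to apply Proposition \ref{cov:fixedlevel} verbatim with $u=v=0$, so essentially no additional work is required. First, I would observe that since ${\rm Cov}(X,X)={\rm Var}\,X$ for any square-integrable random variable $X$, setting $u=v=0$ on the left-hand side of \eqref{eq:cov:fixedlevel} turns the covariance
\[
{\rm Cov}\bigl(K^{(j)}_{\eee^{T+u}}(l),\,K^{(j)}_{\eee^{T+v}}(l)\bigr)
\]
into the variance ${\rm Var}\,K^{(j)}_{\eee^T}(l)$, which is precisely the numerator in \eqref{eq:var}. The denominator $c_j f_j(T)$ is already in the required form.

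Next, I would evaluate the right-hand side of \eqref{eq:cov:fixedlevel} at $u=v=0$. With $|u-v|=0$ one has $\eee^{-|u-v|}=1$, so $\log(1+\eee^{-|u-v|})=\log 2$, and $(1+\eee^{-|u-v|})^{2k}=2^{2k}$ for each $k\in\mn$. Consequently, the series on the right-hand side of \eqref{eq:cov:fixedlevel} collapses to
\[
\sum_{k=1}^{l-1}\frac{(2k-1)!}{(k!)^2\,2^{2k}},
\]
which is exactly what appears in \eqref{eq:var}.

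There is no real obstacle here: all substantive combinatorial and analytic work — the induction on $l$, the use of Lemma \ref{binomial}, and the asymptotics supplied by Proposition \ref{exact} — has already been carried out in the proof of Proposition \ref{cov:fixedlevel}. The corollary is simply the diagonal specialization of that proposition, stated separately because it is the exact statement invoked in Remark \ref{variance} to compare the two possible normalizations $(c_j f_j(T))^{1/2}$ and $({\rm Var}\,K^{(j)}_{\eee^T}(l))^{1/2}$.
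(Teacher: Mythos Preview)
Your proposal is correct and matches the paper's approach exactly: the paper states that Corollary \ref{var} ``is a specialization of Proposition \ref{cov:fixedlevel} with $u=v=0$'' and gives no further proof. Your evaluation of the right-hand side at $|u-v|=0$ is accurate, and your remark that all the substantive work was already done in Proposition \ref{cov:fixedlevel} is precisely the point.
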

\begin{rem}
The right-hand side in \eqref{eq:var} is positive for all $j\in\mn$. Nonnegativity is implied by nonnegativity of the left-hand side. Positivity stems from the fact that the number $\log 2$ is irrational, whereas the sum is rational.

It seems to be an interesting analytic exercise to check nonnegativity directly, not resorting to \eqref{eq:var}. Below we provide a solution. It can be checked by induction on $l$, with the help of Proposition \ref{binomial}, that

$$\sum_{k=1}^{l-1} \frac{(2k-1)!}{(k!)^2 2^{2k}} = \sum_{m=l}^{2(l-1)} \frac{1}{2^mm} \sum_{k=m-l+1}^{l-1} \binom{m}{k},\quad l\in\mn.$$ In view of
$\sum_{k=m-l+1}^{l-1} \binom{m}{k} \le \sum_{k=0}^m \binom{m}{k}=2^m$ for $m\geq l-1$, we are left with showing $\sum_{m=l}^{2(l-1)} \frac{1}{m} \le \log 2$ for $l\in\mn$. For $n\in\mn$, put $\epsilon_n:=1+\sum_{k=1}^{n-1} ((k+1)^{-1}-\log(1+k^{-1}))$. Since $\log(1+k^{-1})=\int_k^{k+1}x^{-1}{\rm d}x \geq (k+1)^{-1}$, the sequence $(\epsilon_n)_{n\in\mn}$ is nonincreasing. Hence,
$$\sum_{m=1}^{2(l-1)} \frac{1}{m} - \sum_{m=1}^{l-1} \frac{1}{m} = \log(2(l-1)) + \epsilon_{2(l-1)} - (\log(l-1)+ \epsilon_{(l-1)}) = \log2 + \epsilon_{2(l-1)} - \epsilon_{(l-1)}\leq \log 2.$$
\end{rem}

Proposition \ref{cross} is a basic ingredient for identifying the cross-covariance functions of the limit processes $Z_{l_1}$ and $Z_{l_2}$.

\begin{assertion}\label{cross}
	Under the assumptions of Theorem \ref{main}, for $j,l\in\mn$, $n\in\mn_0$ and $u,v\in\mr$,
	\begin{multline}\label{eq:2}
		\lim_{T\to\infty}\frac{{\rm Cov}\,(K^{(j)}_{\eee^{T+u}}(l), K^{(j)}_{\eee^{T+v}}(l+n))}{c_jf_j(T)}=
\log(1+\eee^{-|u-v|})-\sum_{k=1}^{l-1} \frac{(2k-1)! \eee^{-|u-v|k}}{(k!)^2 (1+\eee^{-|u-v|})^{2k}}
\\+\sum_{r=0}^{n-1}\sum_{i=0}^{l-1}\Big(\frac{1}{l+r} \binom{l+r}{i} \big((1-\eee^{u-v})_+\big)^{l+r-i}\eee^{(u-v)i}\\-\frac{1}{l+r+i} \binom{l+r+i}{i}\frac{\eee^{(u-v)i}}{(1+\eee^{u-v})^{l+r+i}}\Big).
	\end{multline}
	Here, $c_j$ and $f_j$ are as defined in \eqref{eq:cj}.
\end{assertion}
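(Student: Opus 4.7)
My plan is to prove the statement by induction on $n \in \mn_0$. The base case $n=0$ is precisely Proposition \ref{cov:fixedlevel}, since the outer sum over $r$ is empty. For the inductive step I would exploit the telescoping identity $K_t^{(j)}(l+n+1) = K_t^{(j)}(l+n) - K_t^{*(j)}(l+n)$, which reduces the problem to showing that
\begin{equation*}
\lim_{T\to\infty}\frac{{\rm Cov}\,\big(K^{(j)}_{\eee^{T+u}}(l),\, K^{*(j)}_{\eee^{T+v}}(l+n)\big)}{c_jf_j(T)}
\end{equation*}
equals the negative of the $r=n$ slice (the inner sum over $i$) in \eqref{eq:2}.

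To compute this covariance I would mimic the opening of the proofs of Propositions \ref{x} and \ref{cov:fixedlevel}. One expands $K_s^{(j)}(l)K_t^{*(j)}(l+n)$ as the diagonal sum $\sum_{|\rr|=j}\ind_{\{\pi_\rr(s)\geq l,\, \pi_\rr(t)=l+n\}}$ plus off-diagonal cross terms; by the thinning property of Poisson processes the cross terms contribute only to the product of expectations. After rewriting $\mmp\{\pi_\rr(s)\geq l\} = 1 - \mmp\{\pi_\rr(s)<l\}$, the covariance takes the form
\begin{equation*}
\sum_{|\rr|=j}\Big(\mmp\{\pi_\rr(s)<l\}\,\mmp\{\pi_\rr(t)=l+n\} - \mmp\{\pi_\rr(s)<l,\,\pi_\rr(t)=l+n\}\Big).
\end{equation*}

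I would then split into two regimes. When $s\leq t$, the independence of $\pi_\rr(s)$ and $\pi_\rr(t)-\pi_\rr(s)$ turns the joint probability into a binomial-weighted multiple of $\eee^{-tp_\rr}(tp_\rr)^{l+n}/(l+n)!$, whose $i$th coefficient is $\binom{l+n}{i}(s/t)^i(1-s/t)^{l+n-i}$. When $s>t$, monotonicity of $\pi_\rr$ forces $\{\pi_\rr(t)=l+n\}\subseteq\{\pi_\rr(s)\geq l\}$, so the joint probability vanishes. The two cases then merge into a single formula by replacing $(1-s/t)^{l+n-i}$ with $((1-s/t)_+)^{l+n-i}$; this is precisely the source of the positive-part notation in \eqref{eq:2}. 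Applying the identity $1/(i!(l+n)!) = \binom{l+n+i}{i}/(l+n+i)!$ to the product term recasts it as a linear combination of expectations $\me K^{*(j)}_{s+t}(l+n+i)$; invoking Proposition \ref{exact} together with the slow variation of $g_j$ and computing the coefficients $s^it^{l+n}/(s+t)^{l+n+i} = \eee^{(u-v)i}/(1+\eee^{u-v})^{l+n+i}$ completes the inductive step.

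The main obstacle is the careful case distinction and binomial bookkeeping, in particular verifying that both regimes $s\leq t$ and $s>t$ collapse into the single positive-part expression. No new analytic input is required beyond Proposition \ref{exact} and slow variation of $g_j$.
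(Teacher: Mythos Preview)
Your proposal is correct and follows essentially the same route as the paper: induction on $n$ with base case $n=0$ given by Proposition~\ref{cov:fixedlevel}, and inductive step obtained by computing the increment of the covariance as the second argument passes from $l+n$ to $l+n+1$, then invoking Proposition~\ref{exact} and slow variation of $g_j$. The paper first records closed-form expressions for ${\rm Cov}(K_s^{(j)}(m_1),K_t^{(j)}(m_2))$ with $s\le t$ in the two regimes $m_1<m_2$ and $m_1\ge m_2$, and then subtracts; you instead compute ${\rm Cov}(K_s^{(j)}(l),K_t^{*(j)}(l+n))$ directly, which is exactly that difference. Your observation that the cases $s\le t$ and $s>t$ merge via $((1-e^{u-v})_+)^{l+n-i}$ (since $l+n-i\ge 1$ for all $i\le l-1$) is a mild streamlining of the paper's separate treatment of $u\le v$ and $u>v$, but the underlying computation is the same.
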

\begin{proof}
	Fix $j, m_1, m_2\in\mn$ and $s,t\geq 0$, $s\leq t$. We start by proving that, if $m_1<m_2$, then
	\begin{multline}\label{cov:diflev}
		{\rm Cov}\,(K^{(j)}_{s}(m_1), K^{(j)}_{t}(m_2))=\sum_{|\rr|=j} \Big(\sum_{k=0}^{m_1-1}\eee^{-p_{\rr}t}\frac{(p_{\rr}t)^k}{k!}+\sum_{k=m_1}^{m_2-1}\sum_{i=0}^{m_1-1}
		\eee^{-p_{\rr}t}\frac{(p_{\rr}s)^i}{i!}\frac{(p_{\rr}(t-s))^{k-i}}{(k-i)!}\\-\sum_{k=0}^{m_1-1}\eee^{-p_{\rr}s}
		\frac{(p_{\rr}s)^k}{k!}\sum_{i=0}^{m_2-1}\eee^{-p_{\rr}t}\frac{(p_{\rr}t)^i}{i!}\Big),
	\end{multline}
	whereas if $m_1\geq m_2$, then
	\begin{equation}\label{cov:diflev1}
		{\rm Cov}\,(K^{(j)}_{s}(m_1), K^{(j)}_{t}(m_2))=\sum_{|\rr|=j} \Big(\sum_{i=0}^{m_2-1}\eee^{-p_{\rr}t}\frac{(p_{\rr}t)^i}{i!}-\sum_{k=0}^{m_1-1}\eee^{-p_{\rr}s}\frac{(p_{\rr}s)^k}{k!}
		\sum_{i=0}^{m_2-1}\eee^{-p_{\rr}t}\frac{(p_{\rr}t)^i}{i!}\Big).
\end{equation}
	
Arguing as in the proof of Proposition \ref{cov:fixedlevel} we infer
	$${\rm Cov}\,(K^{(j)}_{s}(m_1), K^{(j)}_{t}(m_2))=\sum_{|\rr|=j}\Big(\mmp\{\pi_{\rr}(s)\ge m_1,\pi_{\rr}(t)\ge m_2\}-\mmp\{\pi_{\rr}(s)\ge m_1\}\mmp\{\pi_{\rr}(t)\ge m_2\}\Big).$$ If $m_1\ge m_2$, then $\{\pi_{\rr}(s)\ge m_1\}\subseteq \{\pi_{\rr}(t)\ge m_2\}$ and thereupon
	$\mmp\{\pi_{\rr}(s)\ge m_1,\pi_{\rr}(t)\ge m_2\}=\mmp\{\pi_{\rr}(s)\ge m_1\}$. Now formula \eqref{cov:diflev1} readily follows.
	
\noindent Let now $m_1<m_2$. In this case it is more convenient to use an alternative representation
\begin{multline}\label{eq:inter2}
{\rm Cov \,}(K^{(i)}_{s}(m_1), K^{(j)}_{t}(m_2))\\ = \sum_{|{\rr}|=j} \Big(\mmp\{\pi_{\rr}(s)\leq m_1-1,\pi_{\rr}(t)\leq m_2-1\}-\mmp\{\pi_{\rr}(s)\leq m_1-1\}\mmp\{\pi_{\rr}(t)\leq m_2-1\}\Big).
\end{multline}
Write $$\mmp\{\pi_{\rr}(s)\le m_1-1,\pi_{\rr}(t)\le m_2-1\}=\sum_{i=0}^{m_1-1}\sum_{k=i}^{m_2-1}\mmp\{\pi_{\rr}(s)=i,\pi_{\rr}(t)=k\}$$
having utilized the fact that $\pi_{\rr}(s)\le \pi_{\rr}(t)$. Since $\pi_{\rr}$ is a Poisson process,
the variable $\pi_{\rr}(t)-\pi_{\rr}(s)$ does not depend on $\pi_{\rr}(s)$ and has the same distribution as $\pi_{\rr}(t-s)$, whence
\begin{multline*}
\mmp\{\pi_{\rr}(s)=i,\pi_{\rr}(t)=k\}=\mmp\{\pi_{\rr}(s)=i,\pi_{\rr}(t)-\pi_{\rr}(s)=k-i\}\\=
\mmp\{\pi_{\rr}(s)=i\}\mmp\{\pi_{\rr}(t-s)=k-i\}=\eee^{-p_{\rr}t}\frac{(p_{\rr}s)^i}{i!}\frac{(p_{\rr}(t-s))^{k-i}}{(k-i)!}.
\end{multline*}
Changing the order of summation $$\sum_{i=0}^{m_1-1}\sum_{k=i}^{m_2-1}=\sum_{k=0}^{m_2-1}\sum_{i=0}^{k\wedge(m_1-1)}=\sum_{k=0}^{m_1-1}\sum_{i=0}^k+ \sum_{k=m_1}^{m_2-1}\sum_{i=0}^{m_1-1}$$ noting that, by the binomial theorem,  $$\sum_{k=0}^{m_1-1}\sum_{i=0}^{k}\eee^{-p_{\rr}t}\frac{(p_{\rr}s)^i}{i!}\frac{(p_{\rr}(t-s))^{k-i}}{(k-i)!}=\eee^{-p_{\rr}t}
\sum_{k=0}^{m_1-1}\frac{(p_{\rr}t)^k}{k!}$$ and appealing to \eqref{eq:inter2} we arrive at \eqref{cov:diflev}.
	
We are going to prove \eqref{eq:2} with the help of mathematical induction on $n$. If $n=0$, then relation \eqref{eq:2} follows from Proposition \ref{cov:fixedlevel}. Assume that \eqref{eq:2} holds for some $n$ that we now fix. Appealing to \eqref{cov:diflev} with $m_1=l$, $m_2=l+n$, and then $m_2=l+n+1$, $s=az$ and $t=bz$, for $a,b,z\geq 0$, $a\le b$, we infer
	\begin{multline*}
		{\rm Cov}\,(K^{(j)}_{az}(l), K^{(j)}_{bz}(l+n+1))-{\rm Cov}\,(K^{(j)}_{az}(l), K^{(j)}_{bz}(l+n))\\=\sum_{|{\rr}|=j} \Big(\sum_{i=0}^{l-1} \eee^{-p_{\rr}bz}\frac{(p_{\rr}az)^i}{i!}\frac{(p_{\rr}(b-a)z)^{l+n-i}}{(l+n-i)!}-\sum_{k=0}^{l-1} \eee^{-p_{\rr}az}\frac{(p_{\rr}az)^k}{k!}\eee^{-p_{\rr}bz}\frac{(p_{\rr}bz)^{l+n}}{(l+n)!}\Big)\\ =\sum_{|{\rr}|=j} \Big(\eee^{-p_{\rr}bz}(p_{\rr}z)^{l+n}\sum_{i=0}^{l-1}\frac{a^i (b-a)^{l+n-i}}{i!(l+n-i)!}-\frac{1}{(l+n)!}\eee^{-p_{\rr}(a+b)z}\sum_{k=0}^{l-1} (p_{\rr}z)^{k+l+n}\frac{a^kb^{l+n}}{k!}\Big).
	\end{multline*}
An application of Proposition \ref{exact} in combination with \eqref{eq:formula} yields $$\sum_{|{\rr}|=j} \eee^{-p_{\rr}bz}(p_{\rr}z)^{l+n}~\sim~\frac{(l+n-1)!}{b^{l+n}} c_jg_j(z),\quad z\to\infty$$ and
\begin{equation}\label{eq:4}
\sum_{|{\rr}|=j} \eee^{-p_{\rr}(a+b)z}(p_{\rr}z)^{k+l+n}~\sim~\frac{(k+l+n-1)!}{(a+b)^{k+l+n}} c_jg_j(z),\quad z\to\infty
\end{equation}
whence
\begin{multline*}
\lim_{z\to\infty}\frac{{\rm Cov}\,(K^{(j)}_{az}(l), K^{(j)}_{bz}(l+n+1))-{\rm Cov}\,(K^{(j)}_{az}(l), K^{(j)}_{bz}(l+n))}{c_jg_j(z)}\\=\frac{1}{(l+n)b^{l+n}}\sum_{i=0}^{l-1} \binom{l+n}{i} a^i(b-a)^{l+n-i}-\sum_{k=0}^{l-1} \binom{k+l+n}{k} \frac{a^kb^{l+n}}{(k+l+n)(a+b)^{k+l+n}}.
\end{multline*}
Putting $a=\eee^u$, $b=\eee^v$ for $u,v\in\mr$, $u\leq v$
and $z=\eee^T$ for $T\in\mr$, recalling that $f_j(T)=g_j(\eee^T)$ and using the induction assumption we conclude that \eqref{eq:2}, with $n+1$ replacing $n$ and $u\leq v$, holds.
	
For a proof of \eqref{eq:2} in the case $u>v$, we use \eqref{cov:diflev1} with $m_1=l+n$, and then $m_1=l+n+1$, $m_2=l$, $s=az$ and $t=bz$ for $a,b,z\geq 0$, $a\le b$, which reads
	\begin{multline*}
		{\rm Cov}\,(K^{(j)}_{az}(l+n+1), K^{(j)}_{bz}(l))-{\rm Cov}\,(K^{(j)}_{az}(l+n), K^{(j)}_{bz}(l))\\=-\frac{a^{l+n}}{(l+n)!}\sum_{|{\rr}|=j}\eee^{-p_{\rr}(a+b)z}\sum_{k=0}^{l-1} (p_{\rr}z)^{k+l+n} \frac{b^k}{k!}.
\end{multline*}
In view of \eqref{eq:4},
\begin{multline*}
\lim_{z\to\infty}\frac{{\rm Cov}\,(K^{(j)}_{az}(l+n+1), K^{(j)}_{bz}(l))-{\rm Cov}\,(K^{(j)}_{az}(l+n), K^{(j)}_{bz}(l))}{c_jg_j(z)}\\=-\sum_{k=0}^{l-1} \binom{k+l+n}{k} \frac{b^k a^{l+n}}{(k+l+n)(a+b)^{k+l+n}}.
\end{multline*}
The remaining induction argument mimics that exploited in the proof of \eqref{eq:2} for the case $u\le v$. The proof of Proposition \ref{cross} is complete.
\end{proof}

Proposition \ref{independ:ln}, when used in proof of Theorem \ref{main}, will imply that, for $l,n\in\mn$, the limit processes  $Z_l^{(i)}$ and $Z_n^{(j)}$ are uncorrelated whenever $i\ne j$. Being Gaussian, these are also independent.
\begin{assertion}\label{independ:ln}
Under the assumptions of Theorem \ref{main}, for
$l,n,i,j \in\mn$, $i<j$ and $u,v\in\mr$,
\begin{equation}\label{cov_copies:}
\lim_{T\to\infty}\frac{{\rm Cov}\,(K^{(i)}_{\eee^{T+u}} (l),
K^{(j)}_{\eee^{T+v}}(n))}{\sqrt{f_i(T)}\sqrt{f_j(T)}}=0.
\end{equation}
\end{assertion}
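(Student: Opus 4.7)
The plan is to establish the uniform estimate
$$|{\rm Cov}(K^{(i)}_{\eee^{T+u}}(l),K^{(j)}_{\eee^{T+v}}(n))|=O(f_i(T)),\quad T\to\infty,$$
and then use
$$\frac{f_i(T)}{\sqrt{f_i(T)f_j(T)}}=\sqrt{\frac{f_i(T)}{f_j(T)}}=T^{(i-j)(\beta+1)/2}\ell(T)^{(i-j)/2}\to 0,$$
the limit being $0$ because $(i-j)(\beta+1)/2<0$ and $\ell$ is slowly varying at $\infty$.

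By the thinning property of Poisson processes, $\pi_\rr$ and $\pi_{\rr'}$ are independent whenever the box $\rr$ with $|\rr|=i$ is not an ancestor of the box $\rr'$ with $|\rr'|=j$. Letting $N_\rr(t)$ denote the sum of $\1_{\{\pi_{\rr'}(t)\geq n\}}$ over generation-$j$ descendants $\rr'$ of $\rr$, we obtain
$${\rm Cov}(K^{(i)}_s(l),K^{(j)}_t(n))=\sum_{|\rr|=i}{\rm Cov}(\1_{\{\pi_\rr(s)\geq l\}},N_\rr(t)).$$
Conditional on the path $\pi_\rr$, the occupancy scheme in the subtree rooted at $\rr$ is an independent copy of the nested Karlin scheme driven by $(p_k)$ with $\pi_\rr(\cdot)$ balls at its root, so $\me[N_\rr(t)\mid\pi_\rr]=\Psi_{j-i}(\pi_\rr(t);n)$ with $\Psi_{j-i}(k;n):=\me\mathcal{K}^{(j-i)}_k(n)$. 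The function $k\mapsto\Psi_{j-i}(k;n)$ is nondecreasing and Lipschitz with constant $1$ because adding one ball to the deterministic nested scheme raises $\mathcal{K}^{(j-i)}(n)$ by $0$ or $1$. Consequently,
$${\rm Cov}(\1_{\{\pi_\rr(s)\geq l\}},N_\rr(t))={\rm Cov}(\1_{\{\pi_\rr(s)\geq l\}},\Psi_{j-i}(\pi_\rr(t);n)).$$

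I would estimate this reduced covariance by a constant (depending only on $l,n$) times $p_\rr s\cdot\mmp(\pi_\rr(s)<l)$ in both time regimes. For $s\leq t$, write $\pi_\rr(t)=A+D$ with $A:=\pi_\rr(s)$ and $D$ an independent Poisson variable of mean $p_\rr(t-s)$, and set $\phi(a):=\me\Psi_{j-i}(a+D;n)$, still nondecreasing and Lipschitz with constant $1$. Combining the identity
$${\rm Cov}(\1_{\{A\geq l\}},\phi(A))=\mmp(A\geq l)\mmp(A<l)\bigl(\me[\phi(A)\mid A\geq l]-\me[\phi(A)\mid A<l]\bigr)$$
with the Lipschitz bound $\me[\phi(A)\mid A\geq l]-\me[\phi(A)\mid A<l]\leq\me[A\mid A\geq l]$ and the Poisson identity $\me[A\1_{\{A\geq l\}}]=p_\rr s\cdot\mmp(A\geq l-1)$ yields $|{\rm Cov}(\1_{\{\pi_\rr(s)\geq l\}},N_\rr(t))|\leq p_\rr s\cdot\mmp(\pi_\rr(s)<l)$. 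For $s>t$ this Lipschitz route is too lossy; instead, I would exploit the pathwise domination $\pi_\rr(t)\leq\pi_\rr(s)$. On $\{\pi_\rr(s)<l\}$ this forces $N_\rr(t)\leq\pi_\rr(s)/n$, so $\me[\1_{\{\pi_\rr(s)<l\}}N_\rr(t)]\leq(p_\rr s/n)\mmp(\pi_\rr(s)<l-1)$ by the same Poisson identity, while the trivial bound $\me N_\rr(t)\leq p_\rr t\leq p_\rr s$ controls the opposite sign of the covariance, giving a bound of the same shape.

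Summing the per-box bound and invoking the Poisson identity $\sum_{|\rr|=i}p_\rr s\cdot\mmp(\pi_\rr(s)<l)=\sum_{k=0}^{l-1}(k+1)\,\me K^{*(i)}_s(k+1)$ together with Proposition \ref{exact} ($\me K^{*(i)}_s(m)\sim c_i g_i(s)/m$ as $s\to\infty$), one obtains a right-hand side asymptotic to $c_i l\,g_i(s)=c_i l\,f_i(T+u)\sim c_i l\,f_i(T)$ (because $f_i(T+u)/f_i(T)\to 1$ as $T\to\infty$). This is the required $O(f_i(T))$ bound and the proposition follows. The main obstacle is the case $s>t$: the clean Lipschitz argument that works for $s\leq t$ is not sharp enough, and one has to combine the pathwise domination $\pi_\rr(t)\leq\pi_\rr(s)$ with the inequality $\me N_\rr(t)\leq p_\rr t$ to recover a bound of the desired order.
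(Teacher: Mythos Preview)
Your argument is correct and follows a genuinely different, more structural route than the paper. Both proofs establish the bound $|{\rm Cov}(K^{(i)}_{\eee^{T+u}}(l),K^{(j)}_{\eee^{T+v}}(n))|=O(f_i(T))$ and then conclude via $f_i(T)=o(f_j(T))$, but the paths diverge immediately after that common reduction. The paper writes the covariance as a double sum over $|\rr_1|=i$ and $|\rr_2|=j-i$ of explicit Poisson probabilities and bounds it from above and from below by separate, fairly laborious case analyses ($t\ge s$ versus $t<s$), tracking the joint law of $(\pi_{\rr_1}(s),\pi_{\rr_1\rr_2}(t))$ directly and estimating the resulting triple sums term by term. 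You instead condition on $\pi_\rr$ to collapse the inner sum over descendants into $\Psi_{j-i}(\pi_\rr(t);n)=\me\mathcal{K}^{(j-i)}_{\pi_\rr(t)}(n)$, observe that this is nondecreasing and $1$-Lipschitz in the ball count, and then combine the indicator-covariance identity with the Poisson identity $\me[A\1_{\{A\ge l\}}]=\lambda\,\mmp(A\ge l-1)$ to obtain the per-box bound $p_\rr s\cdot\mmp(\pi_\rr(s)<l)$ in a couple of lines per case; summing and invoking Proposition~\ref{exact} finishes. Your approach is cleaner and makes the probabilistic mechanism (monotone $1$-Lipschitz coupling of the subtree occupancy in the root count) fully transparent; the paper's computational approach, while heavier, yields exact covariance formulae along the way, which the authors record in the remark following their proof for possible later use.
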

\begin{proof}
Fix positive integers $l,n,i$ and $j$, $i<j$. We shall show that
\begin{equation*}
{\rm Cov}\,(K^{(i)}_{\eee^{T+u}}(l),
K^{(j)}_{\eee^{T+v}}(n))=O(f_i(T)),\quad T\to\infty.
\end{equation*}
Since $f_i(T)=o(f_j(T))$, this relation will ensure \eqref{cov_copies:}.
	
Arguing as in the proof of Proposition \ref{cov:fixedlevel}, we obtain, for $s,t\geq 0$,
\begin{multline*}
{\rm Cov \,}(K^{(i)}_{s}(l), K^{(j)}_{t}(n))\\ = \sum_{|{\rr_1}|=i,\, |{\rr_2}|=j-i} \Big( \mathbb{P}\{\pi_{\rr_1}(s)\ge l, \pi_{\rr_1\rr_2}(t)\ge n\}- \mathbb{P}\{\pi_{\rr_1}(s)\ge l\}\mathbb{P}\{\pi_{\rr_1\rr_2}(t)\ge n\}\Big)\\\leq \sum_{|{\rr_1}|=i, |{\rr_2}|=j-i} \Big( \mathbb{P}\{\pi_{\rr_1\rr_2}(t)\ge n\} - \mathbb{P}\{\pi_{\rr_1}(s)\ge l\}\mathbb{P}\{\pi_{\rr_1\rr_2}(t)\ge n\}\Big)\\=\sum_{|{\rr_1}|=i}\mathbb{P}\{\pi_{\rr_1}(s)\leq l-1\}\sum_{|{\rr_2}|=j-i}\mathbb{P}\{\pi_{\rr_1\rr_2}(t)\ge n\}.
\end{multline*}
Here, $\pi_{\rr_1\rr_2}(t)$ is the number of balls in the box ${\rr_1\rr_2}$ of the Poissonized scheme at time $t$. Recall that the variables $\pi_{\rr_1}(s)$ and $\pi_{\rr_1\rr_2}(t)$ are Poisson distributed of means $p_{\rr_1}s$ and $p_{\rr_1\rr_2}t=p_{\rr_1}p_{\rr_2}t$, respectively. In view of this we conclude that $${\rm Cov \,}(K^{(i)}_{s}(l), K^{(j)}_{t}(n))\leq \sum_{|{\rr_1}|=i} \eee^{-p_{\rr_1}s}\sum_{m=0}^{l-1} \frac{(p_{\rr_1}s)^m}{m!} \\
\sum_{|{\rr_2}|=j-i}\Big( 1 -\eee^{-p_{\rr_1}p_{\rr_2}t}\sum_{k=0}^{n-1}\frac{(p_{\rr_1}p_{\rr_2}t)^k}{k!} \Big),\quad s,t\geq 0.$$ Using the inequalities $$ 1 -\eee^{-p_{\rr_1}p_{\rr_2}t}\sum_{k=0}^{n-1}\frac{(p_{\rr_1}p_{\rr_2}t)^k}{k!} \le \frac{(p_{\rr_1}p_{\rr_2}t)^n}{n!}$$ and $\sum_{|{\rr_2}|=j-i}p_{\rr_2}^n\le 1$ we further infer
\begin{multline}\label{eq:inter3*}
{\rm Cov \,}(K^{(i)}_{s}(l), K^{(j)}_{t}(n))\le \sum_{|{\rr_1}|=i}\eee^{-p_{\rr_1}s}\sum_{m=0}^{l-1} \frac{(p_{\rr_1}s)^m}{m!} \sum_{|{\rr_2}|=j-i}\frac{(p_{\rr_1}p_{\rr_2}t)^n}{n!}\\\leq \frac{t^n}{n!}\sum_{|{\rr_1}|=i}\eee^{-p_{\rr_1}s}\sum_{m=0}^{l-1} \frac{(p_{\rr_1})^{m+n}s^m}{m!},\quad s,t\geq 0.
\end{multline}
Recalling that $f_i(T)=g_i(\eee^T)$ and invoking Proposition \ref{exact} in conjunction with \eqref{eq:formula} yields, for $m\in\mn_0$ and $u\in\mr$,
\begin{equation}\label{eq:exp_asymp}
\sum_{|{\rr_1}|=i}\eee^{-p_{\rr_1}\eee^{u+T}}\frac{(p_{\rr_1}\eee^{u+T})^{m+n}}{(m+n)!}~\sim~ \frac{c_i f_i(T)}{m+n}, \quad T\to\infty.
\end{equation}
With this at hand, putting in \eqref{eq:inter3*} $s=\eee^{u+T}$ and $t=\eee^{v+T}$, for $u,v\in\mr$, we arrive at
$$\frac{{\rm Cov}\,(K^{(i)}_{\eee^{T+u}}(l),K^{(j)}_{\eee^{T+v}}(n))}{f_i(T)}\le C_0
$$ for an appropriate constant $C_0>0$.
	
To obtain an analogous lower bound, we
use an alternative representation: for $s,t\ge 0$,
	\begin{multline}\label{eq:cov:ln}
		{\rm Cov \,}(K^{(i)}_{s}(l), K^{(j)}_{t}(n))\\ = \sum_{|{\rr_1}|=i,
			|{\rr_2}|=j-i} \Big( \mathbb{P}\{\pi_{\rr_1}(s)\leq l-1,
		\pi_{\rr_1\rr_2}(t)\leq n-1\} - \mathbb{P}\{\pi_{\rr_1}(s)\leq
		l-1\}\mathbb{P}\{\pi_{\rr_1\rr_2}(t)\le n-1\}\Big).
	\end{multline}
{\sc Case $t\ge s$.} Write
\begin{multline*}
\mathbb{P}\{\pi_{\rr_1}(s)\le l-1, \pi_{\rr_1\rr_2}(t)\le n-1\}\\=\sum_{m=0}^{l-1}\sum_{a=0}^{n-1}\sum_{k=0}^{m\wedge (n-1-a)} \mathbb{P}\{\pi_{\rr_1}(s)=m, \pi_{\rr_1\rr_2}(s)=k,\pi_{\rr_1\rr_2}(t)-\pi_{\rr_1\rr_2}(s)=a\}\\=\sum_{m=0}^{l-1}\sum_{a=0}^{n-1}\sum_{k=0}^{m\wedge (n-1-a)} \mathbb{P}\{\pi_{\rr_1}(s)=m, \pi_{\rr_1\rr_2}(s)=k\}\mmp\{\pi_{\rr_1\rr_2}(t-s)=a\}.
\end{multline*}
To justify the last equality, observe that the variable $\pi_{\rr_1\rr_2}(t)-\pi_{\rr_1\rr_2}(s)$ has the same distribution as 	$\pi_{\rr_1\rr_2}(t-s)$ and is independent of $\pi_{\rr_1\rr_2}(s)$ because $\pi_{\rr_1\rr_2}$ is a Poisson process. Also, by the thinning property of Poisson processes, the variable $\pi_{\rr_1\rr_2}(t)-\pi_{\rr_1\rr_2}(s)$ is independent of
$\pi_{\rr_1}-\pi_{\rr_1\rr_2}$, hence of $\pi_{\rr_1}(s)$. Using now
\begin{multline*}
\sum_{m=0}^{l-1}\sum_{a=0}^{n-1}\sum_{k=0}^{m\wedge (n-1-a)} \mathbb{P}\{\pi_{\rr_1}(s)=m,
\pi_{\rr_1\rr_2}(s)=k\}\mmp\{\pi_{\rr_1\rr_2}(t-s)=a\}\\
=\sum_{m=0}^{l-1}\sum_{a=0}^{n-1}\sum_{k=0}^{m} \mathbb{P}\{\pi_{\rr_1}(s)=m,
\pi_{\rr_1\rr_2}(s)=k\}\mmp\{\pi_{\rr_1\rr_2}(t-s)=a\}\\-\sum_{m=1}^{l-1}\sum_{a=n-m}^{n-1}\sum_{k=n-a}^{m} \mathbb{P}\{\pi_{\rr_1}(s)=m, \pi_{\rr_1\rr_2}(s)=k\}\mmp\{\pi_{\rr_1\rr_2}(t-s)=a\}\\
\ge  \mathbb{P}\{\pi_{\rr_1}(s)\le l-1\}\mmp\{\pi_{\rr_1\rr_2}(t-s)\le n-1\}-\sum_{m=1}^{l-1}\sum_{a=n-m}^{n-1} \mathbb{P}\{\pi_{\rr_1}(s)=m\}\mmp\{\pi_{\rr_1\rr_2}(t-s)=a\}
\end{multline*}
we infer with the help of \eqref{eq:cov:ln}
\begin{multline}\label{eq:cov:ln:t>s}
{\rm Cov \,}(K^{(i)}_{s}(l), K^{(j)}_{t}(n))\\\ge
\sum_{|{\rr_1}|=i,\, |{\rr_2}|=j-i} \Big( \mathbb{P}\{\pi_{\rr_1}(s)\leq
l-1\} \big(\mmp\{\pi_{\rr_1\rr_2}(t-s)\le n-1\}	- \mathbb{P}\{\pi_{\rr_1\rr_2}(t)\le n-1\}\big)\\ -\sum_{m=1}^{l-1}\sum_{a=n-m}^{n-1} \mathbb{P}\{\pi_{\rr_1}(s)=m\}\mmp\{\pi_{\rr_1\rr_2}(t-s)=a\} \Big)\\\ge
- \sum_{|{\rr_1}|=i,	|{\rr_2}|=j-i} \sum_{m=1}^{l-1}\sum_{a=n-m}^{n-1} \mathbb{P}\{\pi_{\rr_1}(s)=m\}\mmp\{\pi_{\rr_1\rr_2}(t-s)=a\}\\=- \sum_{|{\rr_1}|=i,	|{\rr_2}|=j-i} \sum_{m=1}^{l-1}\sum_{a=n-m}^{n-1} \eee^{-p_{\rr_1}s} \frac{(p_{\rr_1}s)^m}{m!} \eee^{-p_{\rr_1}p_{\rr_2}(t-s)} \frac{(p_{\rr_1}p_{\rr_2}(t-s))^a}{a!}\\\ge
-\sum_{|{\rr_1}|=i} \sum_{m=1}^{l-1}\sum_{a=n-m}^{n-1} \eee^{-p_{\rr_1}s} \frac{p_{\rr_1}^{m+a}s^m(t-s)^a}{m!\,a!}.
\end{multline}
Here, the second inequality is secured by the fact that the function
$t\mapsto\mmp\{\pi_{\rr_1\rr_2}(t)\le n-1\}$ is nonincreasing. The last inequality is a consequence of
$\sum_{|{\rr_2}|=j-i} p_{\rr_2}^a \le 1$.
Putting in \eqref{eq:cov:ln:t>s} $s=\eee^{u+T}$ and $t=\eee^{v+T}$, for $u,v\in\mr$, and invoking \eqref{eq:exp_asymp} yields
$$\frac{{\rm Cov}\,(K^{(i)}_{\eee^{T+u}}(l),K^{(j)}_{\eee^{T+v}}(n))}{f_i(T)}\ge - C_1
$$ for an appropriate constant $C_1>0$.

\noindent {\sc Case $t< s$.} Write
\begin{multline*}
\mathbb{P}\{\pi_{\rr_1}(s)\le l-1, \pi_{\rr_1\rr_2}(t)\le n-1\}\\=\sum_{k=0}^{l-1}\sum_{a=0}^{l-1-k}\sum_{m=0}^{k\wedge (n-1)} \mathbb{P}\{\pi_{\rr_1}(t)=k, \pi_{\rr_1\rr_2}(t)=m,\pi_{\rr_1}(s)-\pi_{\rr_1}(t)=a\}\\
=\sum_{k=0}^{l-1}\sum_{a=0}^{l-1-k}\sum_{m=0}^{k}  \mathbb{P}\{\pi_{\rr_1}(t)=k, \pi_{\rr_1\rr_2}(t)=m,\pi_{\rr_1}(s)-\pi_{\rr_1}(t)=a\}\\-\sum_{k=n}^{l-1}\sum_{a=0}^{l-1-k}\sum_{m=n}^{k}  \mathbb{P}\{\pi_{\rr_1}(t)=k, \pi_{\rr_1\rr_2}(t)=m,\pi_{\rr_1}(s)-\pi_{\rr_1}(t)=a\}\\
=  \mathbb{P}\{\pi_{\rr_1}(s)\le l-1\}-\sum_{k=n}^{l-1}\sum_{a=0}^{l-1-k}\sum_{m=n}^{k}\mathbb{P}\{\pi_{\rr_1}(t)=k,\pi_{\rr_1\rr_2}(t)=m\}
\mmp\{\pi_{\rr_1}(s-t)=a\}.
\end{multline*}
To justify the last equality, observe that the variable $\pi_{\rr_1}(s)-\pi_{\rr_1}(t)$ has the same distribution as $\pi_{\rr_1}(s-t)$ and is independent of $\pi_{\rr_1}(t)$,
hence of $\pi_{\rr_1\rr_2}(t)$. Recalling \eqref{eq:cov:ln}
we infer
\begin{multline}\label{eq:cov:ln:t<s}
{\rm Cov \,}(K^{(i)}_{s}(l), K^{(j)}_{t}(n))\ge \sum_{|{\rr_1}|=i,\,|{\rr_2}|=j-i} \Big( \mathbb{P}\{\pi_{\rr_1}(s)\leq
l-1\} \mathbb{P}\{\pi_{\rr_1\rr_2}(t)\ge n\}\\ -\sum_{k=n}^{l-1}\sum_{a=0}^{l-1-k}\sum_{m=n}^{k} \mathbb{P}\{\pi_{\rr_1}(t)=k,\,\pi_{\rr_1\rr_2}(t)=m\}\mmp\{\pi_{\rr_1}(s-t)=a\} \Big)\\\ge
- \sum_{|{\rr_1}|=i,\,	|{\rr_2}|=j-i} \sum_{k=n}^{l-1}\sum_{a=0}^{l-1-k}\sum_{m=n}^{k} \mathbb{P}\{\pi_{\rr_1}(t)=k,\,\pi_{\rr_1\rr_2}(t)=m\}\mmp\{\pi_{\rr_1}(s-t)=a\}\\=- \sum_{|{\rr_1}|=i,\,	|{\rr_2}|=j-i} \sum_{k=n}^{l-1}\sum_{a=0}^{l-1-k}\sum_{m=n}^{k} \eee^{-p_{\rr_1}t} \frac{(p_{\rr_1}t)^k}{k!}  \frac{(p_{\rr_1}(s-t))^a}{a!}\binom{k}{m}p_{\rr_2}^m(1-p_{\rr_2})^{k-m}\\\ge
-\sum_{|{\rr_1}|=i} \sum_{k=n}^{l-1}\sum_{a=0}^{l-1-k}\sum_{m=n}^{k} \eee^{-p_{\rr_1}t} \frac{p_{\rr_1}^{k+a}t^k(s-t)^a}{k!\,a!}\binom{k}{m},
\end{multline}
where the last inequality follows from
$\sum_{|{\rr_2}|=j-i} p_{\rr_2}^m(1-p_{\rr_2})^{k-m} \le 1$.
Putting in \eqref{eq:cov:ln:t<s} $s=\eee^{u+T}$ and $t=\eee^{v+T}$, for $u,v\in\mr$, and invoking \eqref{eq:exp_asymp} yields
$$\frac{{\rm Cov}\,(K^{(i)}_{\eee^{T+u}}(l), K^{(j)}_{\eee^{T+v}}(n))}{f_i(T)}\ge - C_2
$$
for an appropriate constant $C_2>0$. The proof of Proposition \ref{independ:ln} is complete.
\end{proof}
\begin{rem}
For a possible future work we point out precise formulae:
\begin{multline*}
	{\rm Cov \,}(K^{(i)}_{s}(l), K^{(j)}_{t}(n)) =  \sum_{|{\rr_1}|=i}
	\eee^{-p_{\rr_1}s}\sum_{m=0}^{l-1} \frac{(p_{\rr_1}s)^m}{m!} \\\times
	\sum_{|{\rr_2}|=j-i}
	\left( \eee^{-p_{\rr_1}p_{\rr_2}(t-s)} \sum_{k=0}^m \binom{m}{k}
	p_{\rr_2}^k (1-p_{\rr_2})^{m-k}
	\sum_{a=0}^{n-k-1} \right. \frac{(p_{\rr_1}p_{\rr_2}(t-s))^a}{a!}  \\
	\left.
	-\eee^{-p_{\rr_1}p_{\rr_2}t}\sum_{b=0}^{n-1}\frac{(p_{\rr_1}p_{\rr_2}t)^b}{b!}
	\right),\quad t\ge s\geq 0,
\end{multline*}
\begin{multline*}
	{\rm Cov \,}(K^{(i)}_{s}(l), K^{(j)}_{t}(n)) =  \sum_{|{\rr_1}|=i}
	\eee^{-p_{\rr_1}s} \\\times
	\sum_{|{\rr_2}|=j-i}
	\left( \sum_{k=0}^{l-1}
	\sum_{a=0}^{l-k-1}
	\frac{(p_{\rr_1}(s-t))^a}{a!} \sum_{m=0}^{k\wedge (n-1)}
	\frac{(p_{\rr_1}t)^k}{k!}\binom{k}{m}p_{\rr_2}^m(1-p_{\rr_2})^{k-m}\right. \\\left.
	-\sum_{m=0}^{l-1} \frac{(p_{\rr_1}s)^m}{m!}\eee^{-p_{\rr_1}p_{\rr_2}t}\sum_{b=0}^{n-1}\frac{(p_{\rr_1}p_{\rr_2}t)^b}{b!}
	\right),\quad s>t\ge 0
\end{multline*}
where $0^0$ is interpreted as $1$. Such a precision is not needed in the present paper, crude estimates exploited in the proof of Proposition \ref{independ:ln} being sufficient.
\end{rem}

Lemma \ref{determ} will be used in the proof of Corollary \ref{main_poiss}.
\begin{lemma}\label{determ}
For each $l\in\mn$ and each $j\in\mn$, there exists a constant $B_l$ (which does not depend on $j$) such that, for large enough $t$,
$$|\me K_t^{(j)}(l)-\me\mathcal{K}_{\lfloor t\rfloor}^{(j)}(l)|\le B_l.$$
\end{lemma}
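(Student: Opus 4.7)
The plan is to set $n := \lfloor t \rfloor$ and decompose
\[
|\me K_t^{(j)}(l) - \me \mathcal{K}_n^{(j)}(l)| \le |\me K_t^{(j)}(l) - \me K_n^{(j)}(l)| + |\me K_n^{(j)}(l) - \me \mathcal{K}_n^{(j)}(l)|,
\]
handling the two summands by distinct mechanisms: a derivative bound in Poissonized time for the first, and a per-box Binomial-versus-Poisson estimate for the second.

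For the first summand, I would differentiate $\me K_s^{(j)}(l) = \sum_{|\rr|=j}\bigl(1 - e^{-sp_\rr}\sum_{k=0}^{l-1}(sp_\rr)^k/k!\bigr)$ termwise (justified by monotone convergence, all terms being positive) to obtain
\[
\frac{d}{ds}\me K_s^{(j)}(l) = \sum_{|\rr|=j}\frac{p_\rr\, e^{-sp_\rr}(sp_\rr)^{l-1}}{(l-1)!}.
\]
Applying the elementary pointwise estimate $x^{l-1}e^{-x} \le ((l-1)/e)^{l-1}$ (with the convention $0^0 := 1$ when $l = 1$) together with $\sum_{|\rr|=j}p_\rr = 1$, this derivative is bounded uniformly in $j \in \mn$ and $s > 0$ by the constant $c_l := ((l-1)/e)^{l-1}/(l-1)!$. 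Since $t - n < 1$, integrating over $[n,t]$ yields $|\me K_t^{(j)}(l) - \me K_n^{(j)}(l)| \le c_l$.

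For the second summand, I would use the representations
\[
\me K_n^{(j)}(l) = \sum_{|\rr|=j}\mmp\{\pi_\rr(n) \ge l\}, \qquad \me \mathcal{K}_n^{(j)}(l) = \sum_{|\rr|=j}\mmp\{\mathrm{Bin}(n, p_\rr) \ge l\},
\]
where $\pi_\rr(n)$ is Poisson of mean $np_\rr$, and invoke the Stein--Chen (Barbour--Hall) total variation bound
\[
|\mmp\{\pi_\rr(n) \ge l\} - \mmp\{\mathrm{Bin}(n, p_\rr) \ge l\}| \le d_{TV}(\mathrm{Bin}(n, p_\rr), \mathrm{Poi}(np_\rr)) \le (1 - e^{-np_\rr})p_\rr \le p_\rr.
\]
Summing termwise yields $|\me K_n^{(j)}(l) - \me \mathcal{K}_n^{(j)}(l)| \le \sum_{|\rr|=j}p_\rr = 1$, again uniformly in $j$ and $n$. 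The constant $B_l := c_l + 1$ then does the job.

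The main obstacle is securing the $j$-uniformity in the second step. A crude Le Cam-type estimate $d_{TV}(\mathrm{Bin}(n, p), \mathrm{Poi}(np)) \le np^2$ would produce $\sum_{|\rr|=j}np_\rr^2$, which is unbounded in $n$ and does not respect the required uniformity in $j$. The Stein--Chen sharpening, replacing $np^2$ by $(1-e^{-np})p$, is precisely what allows the termwise sum to telescope into the universal constant $\sum_{|\rr|=j}p_\rr = 1$, giving a bound depending only on $l$.
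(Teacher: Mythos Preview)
Your proof is correct and uses the same two-term decomposition as the paper. The first step is essentially identical (the paper bounds $p_{{\rr},l}(\lfloor t\rfloor)-p_{{\rr},l}(\lfloor t\rfloor+1)$ directly rather than passing through the derivative, but the underlying inequality $x^{l-1}e^{-x}\le ((l-1)/e)^{l-1}$ and the summation $\sum_{|\rr|=j}p_\rr=1$ are the same).

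The second step is where the two proofs genuinely diverge. The paper does not invoke total variation at all: it works term by term, comparing $e^{-p_\rr k}(p_\rr k)^i/i!$ with $\binom{k}{i}p_\rr^i(1-p_\rr)^{k-i}$ for each $0\le i\le l-1$ via a sequence of elementary inequalities ($e^{-pk}\ge(1-p)^k$, $(1-p)^i\ge 1-ip$, $0\le e^{-xk}-(1-x)^k\le x^2ke^{-xk}$, etc.), eventually squeezing each term between $-A_ip_\rr$ and $B_ip_\rr$ for explicit constants $A_i,B_i$. Your route via the Barbour--Hall Stein--Chen bound $d_{TV}(\mathrm{Bin}(n,p),\mathrm{Poi}(np))\le p(1-e^{-np})$ is considerably shorter and cleaner, at the cost of importing an external result the paper does not cite; the paper's argument is longer but entirely self-contained. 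Your closing remark about why the naive Le~Cam bound $np^2$ fails here is exactly on point.
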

\begin{proof}
For $t\ge0$, $n,l\in\mn$ and the box $\rr$, denote by $p_{{\rr},l}(t)$ and $\tilde p_{{\rr},l}(n)$ the probability of the event that there are no more than $l-1$ balls in the box $r$ of the Poissonized version  at time $t$ and the deterministic scheme at time $n$,  respectively, that is,
$$p_{{\rr},l}(t):=\eee^{-p_{\rr}t}\sum_{i=0}^{l-1}\frac{(p_{\rr}t)^{i}}{i!}, \quad\quad \tilde p_{{\rr},l}(n):=\sum_{i=0}^{l-1}\binom{n}{i}p_{\rr}^i(1-p_{\rr})^{n-i}.$$ By the triangle inequality, for $j\in\mn$,
\begin{multline*}
|\me K_t^{(j)}(l)-\me\mathcal{K}_{\lfloor t\rfloor}^{(j)}(l)|=\Big|\sum_{|{\rr}|=j} \Big(p_{{\rr},l}(t) - \tilde p_{{\rr},l}(\lfloor t\rfloor)\Big)\Big|\\ \le \sum_{|{\rr}|=j} \Big|p_{{\rr},l}(t) - p_{{\rr},l}(\lfloor t\rfloor)\Big|+\sum_{|{\rr}|=j} \Big|p_{{\rr},l}(\lfloor t\rfloor) - \tilde p_{{\rr},l}(\lfloor t\rfloor)\Big|.
\end{multline*}

We start by analyzing the first sum. As the function $t\mapsto p_{{\rr},l}(t)$ is nonincreasing,
$$\Big|p_{{\rr},l}(t) - p_{{\rr},l}(\lfloor t\rfloor)\Big|=p_{{\rr},l}(\lfloor t\rfloor)-p_{{\rr},l}(t)\le p_{{\rr},l}(\lfloor t\rfloor)-p_{{\rr},l}(\lfloor t\rfloor+1).$$
Further, for $k\in\mn$,
$$p_{{\rr},l}(k)-p_{{\rr},l}(k+1)\le \Big(1+p_{\rr}k+\ldots+\frac{(p_{\rr}k)^{l-1}}{(l-1)!}\Big)\eee^{-p_{\rr}k}(1-\eee^{-p_{\rr}})\le\eee^{p_{\rr}k}
\eee^{-p_{\rr}k}p_{\rr}=p_{\rr}.$$
As a consequence, $$\sum_{|{\rr}|=j}\Big|p_{{\rr},l}(t) - p_{{\rr},l}
\lfloor t\rfloor\Big|\le\sum_{|{\rr}|=j} p_{\rr}=1.$$
	
Passing to the analysis
of the second sum, write, for $k\in\mn$,
$$\Big|p_{{\rr},l}(k) - \tilde p_{{\rr},l}(k)\Big|\le\sum_{i=0}^{l-1} \Big|\eee^{-p_{\rr}k}\frac{(p_{\rr}k)^{i}}{i!}-\binom{k}{i}p_{\rr}^i(1-p_{\rr})^{k-i}\Big|.$$
On the one hand, for $i\in\mn$, $i\leq k$,
\begin{multline*}
\eee^{-p_{\rr}k}\frac{(p_{\rr}k)^{i}}{i!}-\frac{k(k-1)\cdot\ldots\cdot(k-i+1)}{i!}p_{\rr}^i(1-p_{\rr})^{k-i}\\\ge \frac{(p_{\rr}k)^i}{i!}\Big(\eee^{-p_{\rr}k}-(1-p_{\rr})^{k}+(1-p_{\rr})^{k}-(1-p_{\rr})^{k-i}\Big)\ge\frac{(p_{\rr}k)^i}{i!}
(1-p_{\rr})^{k-i}((1-p_{\rr})^i-1)\\\ge-\frac{(p_{\rr}k)^i}{(i-1)!}p_{\rr}\eee^{-p_{\rr}(k-i)}.
\end{multline*}
We have used $\eee^{-p_{\rr}k}\ge(1-p_{\rr})^k$ for the second and third inequality and $(1-p_{\rr})^i\ge 1-ip_{\rr}$ for the third inequality. Using the inequality
\begin{equation}\label{eq:ineq}
y^m\eee^{-y}\le m^m\eee^{-m}
\end{equation}
which holds for $m\in\mn$ and $y\ge 0$ we obtain
$$-\frac{(p_{\rr}k)^i\eee^{-p_{\rr}k}}{(i-1)!}p_{\rr}\eee^{p_{\rr}i}\ge-\frac{i^i}{(i-1)!}p_{\rr}\eee^{p_{\rr}i}\eee^{-i}
\ge-\frac{i^i}{(i-1)!}p_{\rr}=:-A_ip_{\rr}.$$ For the summand which corresponds to $i=0$ we have $\eee^{-p_{\rr}k}-(1-p_{\rr})^k\ge 0=:A_0$. Thus, we have shown that, for $k\in\mn$ and $i\in\mn_0$, $i\leq k$, $$\eee^{-p_{\rr}k}\frac{(p_{\rr}k)^{i}}{i!}-\binom{k}{i}p_{\rr}^i(1-p_{\rr})^{k-i}\geq -A_i p_{\rr}.$$
On the other hand, we infer, for $i\geq 2$,
\begin{multline}\label{eq:1}
\eee^{-p_{\rr}k}\frac{(p_{\rr}k)^{i}}{i!}-\frac{k(k-1)\cdot\ldots\cdot(k-i+1)}{i!}p_{\rr}^i(1-p_{\rr})^{k-i}\\
=\frac{p_{\rr}^ik}{i!}\Big(k^{i-1}\eee^{-p_{\rr}k}-(k-1)\cdot\ldots\cdot(k-i+1)(1-p_{\rr})^{k-i}\Big)\\
=\frac{p_{\rr}^ik}{i!}\Big(k^{i-1}\eee^{-p_{\rr}k}-(k-1)\cdot\ldots\cdot(k-i+1)\eee^{-p_{\rr}k}\Big)+
\frac{p_{\rr}^ik}{i!}(k-1)\cdot\ldots\cdot(k-i+1)\Big(\eee^{-p_{\rr}k}-(1-p_{\rr})^{k-i}\Big)\\
\le\frac{p_{\rr}^ik}{i!}\Big(k^{i-1}-(k-1)\cdot\ldots\cdot(k-i+1)\Big)\eee^{-p_{\rr}k}+\frac{p_{\rr}^ik}{i!}(k-1)\cdot
\ldots\cdot(k-i+1)\Big(\eee^{-p_{\rr}k}-(1-p_{\rr})^k\Big)
\end{multline}
having utilized $(1-p_{\rr})^{n}\le(1-p_{\rr})^{n-i}$ for the last inequality. The function
$k\mapsto k^{i-1}-(k-1)\cdot\ldots\cdot(k-i+1)$ is a
polynomial of degree~$i-2$. Its leading coefficient is $\frac{(i-1)i}{2}$ and the coefficient in front of $k^{i-3}$ is negative. Therefore, for large enough $k$, $$k^{i-1}-(k-1)\cdot\ldots\cdot(k-i+1)\le\frac{(i-1)i}{2}k^{i-2}.$$
In view of this and the inequality  $0\le \eee^{-xk}-(1-x)^k \le x^2k\eee^{-xk}$ for $x\in[0,\,1]$ (see p. 530 in~\cite{Olmsted:1959}), the right-hand side of \eqref{eq:1} does not exceed
$$\frac{p_{\rr}^ik}{i!}\frac{(i-1)i}{2}k^{i-2}\eee^{-p_{\rr}k}+\frac{(p_{\rr}k)^i}{i!}p_{\rr}^2k\eee^{-p_{\rr}k}
=\frac{(p_{\rr}k)^{i-1}}{2(i-2)!}\eee^{-p_{\rr}k}p_{\rr}+\frac{(p_{\rr}k)^{i+1}}{i!}\eee^{-p_{\rr}k}p_{\rr}$$
for large $k$.
Invoking \eqref{eq:ineq} once again we conclude that, for $i\geq 2$ and large $k$,
$$\frac{(p_{\rr}k)^{i-1}}{2(i-2)!}\eee^{-p_{\rr}k}p_{\rr}+\frac{(p_{\rr}k)^{i+1}}{i!}\eee^{-p_{\rr}k}p_{\rr}\le \Big(\frac{(i+1)^{i+1}\eee^{-(i+1)}}{i!}+\frac{(i-1)^{i-1}\eee^{-(i-1)}}{2(i-2)!}\Big)p_{\rr}=:B_i p_{\rr}.$$
For the terms which correspond to $i=0$ and $i=1$ we obtain with the help of \eqref{eq:ineq} that $\eee^{-p_{\rr}k}-(1-p_{\rr})^k\le p_{\rr}^2k\eee^{-p_{\rr}k}\le \eee^{-1} p_{\rr}:=B_0p_{\rr}$ and
\begin{multline*}
\eee^{-p_{\rr}k}p_{\rr}k-p_{\rr}k(1-p_{\rr})^{k-1}=p_{\rr}k\Big(\eee^{-p_{\rr}k}-(1-p_{\rr})^{k-1}\Big)\le p_{\rr}k\Big(\eee^{-p_{\rr}k}-(1-p_{\rr})^k\Big)\\\le (p_{\rr}k)^2\eee^{-p_{\rr}k}p_{\rr}\le 4\eee^{-2}p_{\rr}=:B_1p_{\rr}.
\end{multline*}
Thus, for large $k\in\mn$ and $i\in\mn_0$, $i\leq k$,
$$-A_i p_{\rr}\le\eee^{-p_{\rr}k}\frac{(p_{\rr}k)^{i}}{i!}-\binom{k}{i}p_{\rr}^i(1-p_{\rr})^{k-i}\le B_ip_{\rr}$$ and thereupon, for large $t$,
$$\sum_{|{\rr}|=j} \sum_{i=0}^{l-1} \Big|\eee^{-p_{\rr}\lfloor t\rfloor}\frac{(p_{\rr}\lfloor t\rfloor)^{i}}{i!}-\binom{\lfloor t\rfloor}{i}p_{\rr}^i(1-p_{\rr})^{\lfloor t\rfloor-i}\Big|\le \sum_{|{\rr}|=j} \sum_{i=0}^{l-1}  p_{\rr}\max(A_i, B_i)=\sum_{i=0}^{l-1}\max(A_i, B_i).$$

We have proved that the claim of the lemma holds with
$B_l:=1+\sum_{i=0}^{l-1}\max(A_i, B_i)$.
\end{proof}

\subsection{A probabilistic result}\label{sect:prob}

For $l\in\mn$, denote by $T_{{\rr}, l}$ the epoch at which the box $\rr$
is filled for the $l$th time. For each $t\geq 0$, the event $\{T_{{\rr}, l}> t\}$ coincides with  $\{\pi_{\rr}(t)\leq l-1\}$, whence $$\mmp\{T_{{\rr},l}>t\}=\sum_{i=0}^{l-1}\eee^{-p_{\rr}t}\frac{(p_{\rr}t)^i}{i!},\quad t\geq 0.$$ This means that the variable $T_{{\rr}, l}$ has the Erlang distribution with parameters $l$ and $p_{\rr}$. For the boxes ${\rr_1}$, ${\rr_2},\ldots$ belonging to the same generation, the random variables $T_{{\rr_1}, l}$, $T_{{\rr_2}, l},\ldots$ are independent, by the thinning property of Poisson processes.

Put $G_{{\rr},l}:=\log T_{{\rr}, l}+\log p_{\rr}$. Then $G_{{\rr_1}, l}$, $G_{{\rr_2}, l},\ldots$ are independent copies of a random variable $G_l$ with the distribution function
\begin{equation*}
\mmp\{G_l\le x\}=\mmp \left\{T_{{\rr}, l}\le\frac{\eee^x}{p_{\rr}} \right\}=1-\eee^{-\eee^x}\Big(1+\eee^x+\ldots+\frac{(\eee^x)^{l-1}}{(l-1)!}\Big),\quad x\in\mr.
\end{equation*}

In Lemma \ref{lem:density} we provide an estimate for the distribution function of $G_{l}$.
It will be used in the proof of tightness in Theorem \ref{main}.
\begin{lemma}\label{lem:density}
Fix $l\in\mn$ and $A>0$. Then
there exists a positive constant $C_l=C_l(A)$ such that
$$\mmp\{s+u< G_l \leq s+v\} \leq C_l (v-u) \, \eee^{-|s|}$$ for all $u<v$ from the interval $[-A,\,A]$ and all $s\in \R$.
\end{lemma}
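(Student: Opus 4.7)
The plan is to reduce the estimate to a pointwise upper bound on the density of $G_l$. Differentiating the formula for $\mmp\{G_l \le x\}$ stated just before the lemma, and observing that two telescoping sums appear, one finds
\[
f_l(x) := \frac{\dd}{\dd x}\mmp\{G_l\le x\} = \frac{\eee^{lx-\eee^x}}{(l-1)!}, \qquad x\in\mr,
\]
which is also consistent with $G_l$ having the same distribution as $\log T$ for $T$ Erlang with parameters $l$ and $1$. Since
\[
\mmp\{s+u<G_l\le s+v\}=\int_{s+u}^{s+v}f_l(x)\,\dd x\le (v-u)\sup_{x\in[s-A,\,s+A]}f_l(x),
\]
it suffices to prove that
\[
\sup_{x\in[s-A,\,s+A]}f_l(x)\le C_l(A)\,\eee^{-|s|}, \qquad s\in\mr.
\]

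I would verify this by splitting into two cases. For $s\ge 0$, on the interval $[s-A,\,s+A]$ one has $\eee^{lx}\le \eee^{l(s+A)}$ and, since $x\mapsto \eee^{-\eee^x}$ is decreasing, $\eee^{-\eee^x}\le \eee^{-\eee^{s-A}}$, hence $\sup f_l\le \eee^{l(s+A)-\eee^{s-A}}/(l-1)!$. Multiplying by $\eee^{s}$ produces $\eee^{(l+1)s+lA-\eee^{s-A}}/(l-1)!$, and this is bounded on $[0,\infty)$ because $\eee^{s-A}$ eventually dominates any linear function of $s$. For $s\le 0$, I would simply discard the factor $\eee^{-\eee^x}\le 1$ and use $\eee^{lx}\le \eee^{l(s+A)}$ to obtain $\sup f_l\le \eee^{l(s+A)}/(l-1)!$. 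Multiplying by $\eee^{-s}$ gives $\eee^{(l-1)s+lA}/(l-1)!$, which is bounded on $(-\infty,0]$ because $(l-1)s\le 0$ there. Taking $C_l(A)$ to be the maximum of the two resulting finite suprema finishes the proof.

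The only mildly delicate step is the initial density computation: one has to recognise that differentiating $1-\eee^{-\eee^x}\sum_{k=0}^{l-1}\eee^{kx}/k!$ yields two sums whose common terms cancel in pairs, leaving precisely the single monomial $\eee^{lx-\eee^x}/(l-1)!$. After that reduction the two-case exponential estimate is entirely routine.
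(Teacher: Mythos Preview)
Your proof is correct and follows essentially the same route as the paper: both compute the density $g_l(x)=\eee^{lx-\eee^x}/(l-1)!$ and reduce the lemma to a pointwise exponential bound on it. The only difference is tactical: the paper first establishes the global inequality $g_l(x)\le d_l\,\eee^{-|x-\log l|}$ using the unimodality of $g_l$ (mode at $\log l$) and then splits into three cases according to whether $s$ lies above, below, or near $\log l$; you instead bound the two factors $\eee^{lx}$ and $\eee^{-\eee^x}$ separately on $[s-A,s+A]$ and split only into $s\ge 0$ and $s\le 0$. Your argument is a touch more direct (no unimodality, two cases instead of three), while the paper's intermediate bound $g_l(x)\le d_l\,\eee^{-|x-\log l|}$ is a slightly cleaner stand-alone statement; both are entirely elementary.
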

\begin{proof}
The density $g_l$ of $G_l$ is given by
$g_l(x) = \eee^{-\eee^x}\frac{(\eee^x)^l}{(l-1)!}$ for $x\in\mr$. This function increases on the interval $(-\infty, \log l)$ and decreases on $(\log l, \infty)$. Moreover,
the inequality $g_l(x) \leq d_l \eee^{-|x-\log l|}$ holds for all $x\in\mr$, with $d_1=1$ and $d_l=\max \big(\frac{(l+1)^{l+1}\eee^{-(l+1)}}{l!},\frac{(l-1)^{l-1}\eee^{-(l-1)}l}{(l-1)!} \big)$ for $l\geq 2$.
	
If $s>\log l+A$, then $\log l < s + u < s+v$ and $$\mmp\{s+u<G_l \leq s+v\}=\int_{s+u}^{s+v}g_l(y){\rm d}y \leq (v-u) g_l(s+u) \leq (v-u) g_l(s-A) \leq d_l \eee^{A+\log l} (v-u) \eee^{-s}.$$ If $s<\log l-A$, then $s+u < s+v <\log l$ and a similar estimate holds true. Finally, if $s\in [\log l-A,\,\log l +A]$, then $s+u$ and $s+v$ are contained in the interval $[\log l-2A,\,\log l+2A]$ and $$\mmp\{s+u< G_l \leq s+v\} \leq c_l(A) (v-u)\leq c_l(A)e^{A+\log l}(v-u)e^{-|s|},$$ where $c_l(A):=\sup_{y\in [\log l-2A,\,\log l +2A]} g_l(y)$.
\end{proof}

\section{Proofs of the main results}\label{sec:pro}

\subsection{Proof of Theorem \ref{intX}}
The fact that $X_l$ is a centered Gaussian process is an immediate consequence of \eqref{eq:repr}.

We proceed by showing that the covariance of $X_l$ defined in \eqref{eq:repr} is given by \eqref{eq:covX}. By the property of the stochastic integrals with respect to a white noise, for $u,v\in\mr$,
	\begin{multline*}
		{\rm Cov}\,(X_l(u), X_l(v))=\int_{\R\times [0,\,1]^{l+1}} \left( \ind_{\left\{y_1\cdot\ldots\cdot y_{l+1} < \psi_0(\eee^{-(x-u)})<y_1\cdot\ldots\cdot y_{l}\right\}} -\psi_l(\eee^{-(x-u)})\right)\\\times\left( \ind_{\left\{y_1\cdot\ldots\cdot y_{l+1} < \psi_0(\eee^{-(x-v)})<y_1\cdot\ldots\cdot y_{l}\right\}} -\psi_l(\eee^{-(x-v)})\right) \dd x \dd y_1\ldots\dd y_{l+1}.
	\end{multline*}
While calculating the integral on the right-hand side, we can and do assume that $u\leq v$ which particularly implies that $\psi_0(\eee^{-(x-u)})\ge\psi_0(\eee^{-(x-v)})$ for each fixed $x\in\mr$. As a consequence,
\begin{multline*}\int_{[0,\,1]^{l+1}} \ind_{\left\{y_1\cdot\ldots\cdot y_{l+1} < \psi_0(\eee^{-(x-u)})<y_1\cdot\ldots\cdot y_{l}\right\}} \ind_{\left\{y_1\cdot\ldots\cdot y_{l+1} < \psi_0(\eee^{-(x-v)})<y_1\cdot\ldots\cdot y_{l}\right\}} \dd y_1\cdot\ldots\cdot\dd y_{l+1}\\
=\int_{[0,\,1]^{l+1}
} \ind_{\left\{y_1\cdot\ldots\cdot y_{l+1} < \psi_0(\eee^{-(x-v)}),\,  \psi_0(\eee^{-(x-u)})<y_1\cdot\ldots\cdot y_{l}\right\}}  \dd y_1\ldots\dd y_{l+1}.
\end{multline*}
The latter integral does not vanish if, and only if,
$$\left\{
	\begin{aligned}
		&	y_{l+1} < \frac{\psi_0(\eee^{-(x-v)})}{y_1\cdot\ldots\cdot y_{l}};\\
		&	y_{l-i}>\frac{\psi_0(\eee^{-(x-u)})}{y_1\cdot\ldots\cdot y_{l-i-1}}\quad \text{for }0\le i\le l-1,
	\end{aligned}
	\right.$$
where, for $i=l-1$, the product $y_1\cdot \ldots\cdot y_{l-i-1}$ is interpreted as $1$. The fact $\psi_0(\eee^{-(x-v)})/(y_1\cdot \ldots\cdot y_{l})<1$ which is implicit in the first inequality follows from $\psi_0(\eee^{-(x-u)})<y_1\cdot \ldots\cdot y_l$ and our assumption $u\le v$. The second inequality is a consequence of $\psi_0(\eee^{-(x-u)})<y_1\cdot\ldots\cdot y_l\leq y_1\cdot\ldots\cdot y_{l-i}$. Integration over the indicated region yields
\begin{multline}\label{11}
		\int_{[0,\,1]^{l+1}
} \ind_{\left\{y_1\cdot \ldots\cdot y_{l+1} < \psi_0(\eee^{-(x-v)}),\, \psi_0(\eee^{-(x-u)})<y_1\cdot\ldots\cdot y_{l}\right\}}  \dd y_1\ldots\dd y_{l+1}\\=\psi_0(\eee^{-(x-v)})\frac{(-\log \psi_0(\eee^{-(x-u)}))^l}{l!}.
	\end{multline}
Details are routine, hence, omitted.
Noting that a specialization of \eqref{11} with $v=u$ reads
	\begin{equation}\label{indicator}
		\int_{[0,\,1]^{l+1}
} \ind_{\left\{y_1\cdot \ldots\cdot y_{l+1} < \psi_0(\eee^{-(x-u)})<y_1\cdot\ldots \cdot y_{l}\right\}}  \dd y_1\ldots\dd y_{l+1}=\psi_l(\eee^{-(x-u)})
	\end{equation}
we obtain
	$${\rm Cov}\,(X_l(u), X_l(v))=\int_{\R}\left( \psi_0(\eee^{-(x-v)})\frac{(-\log \psi_0(\eee^{-(x-u)}))^l}{l!} - \psi_l(\eee^{-(x-u)})\psi_l(\eee^{-(x-v)}) \right) \dd x.$$
Changing the variable
$t:= \eee^{-x}$ enables us to conclude that
	$${\rm Cov}\,(X_l(u), X_l(v))=\frac{\eee^{-(v-u)l}}{l}-\frac{(2l-1)!\eee^{-(v-u)l}}{(l!)^2 (1+\eee^{-(v-u)})^{2l}},\quad u\leq v$$ which is equivalent to \eqref{eq:covX}.

According to \eqref{eq:covX}, the covariance $(u,v)\mapsto \me X_l(u)X_l(v)$ is a function of $|u-v|$. Hence, the Gaussian process $X_l$ is both strict-sense and wide-sense stationary.

Next, we prove that the cross-covariances of $(X_l)$ defined in \eqref{eq:repr} are given by \eqref{eq:crossX}. The argument that follows is slightly more complicated than that used in the previous part of the proof. Our starting point is: for $l_1>l_2$,
\begin{multline*}
	\me X_{l_1}(u)X_{l_2}(v)=\int_{\R\times [0,\,1]^{l_1+1}
} \left( \ind_{\left\{y_1\cdot \ldots\cdot y_{l_1+1} < \psi_0(\eee^{-(x-u)})<y_1\cdot \ldots\cdot y_{l_1}\right\}} -\psi_{l_1}(\eee^{-(x-u)})\right)\\\times\left( \ind_{\left\{y_1\cdot\ldots\cdot y_{l_2+1} < \psi_0(\eee^{-(x-v)})<y_1\cdot\ldots\cdot y_{l_2}\right\}} -\psi_{l_2}(\eee^{-(x-v)})\right) \dd x \dd y_1\ldots\dd y_{l_1+1}.
\end{multline*}
Here, we have used an alternative representation $$X_{l_2}(v)=\int_{\R\times [0,\,1]^{l_1+1}}\left( \ind_{\left\{y_1\cdot\ldots\cdot y_{l_2+1} < \psi_0(\eee^{-(x-v)})<y_1\cdot\ldots\cdot y_{l_2}\right\}} -\psi_i(\eee^{-(x-v)})\right)W_{l_1+1}(\dd x, \dd y_1,\ldots,\dd y_{l_1+1})$$ which is secured by \eqref{W_lconsist}.
The integral
$$\int_{[0,\,1]^{l_1+1}
} \ind_{\left\{y_1\cdot\ldots\cdot y_{l_1+1} < \psi_0(\eee^{-(x-u)})<y_1\cdot\ldots\cdot y_{l_1}\right\}} \ind_{\left\{y_1\cdot\ldots\cdot y_{l_2+1} < \psi_0(\eee^{-(x-v)})<y_1\cdot\ldots\cdot y_{l_2}\right\}} \dd y_1\ldots\dd y_{l_1+1}$$
does not vanish if, and only if,
$$\left\{
\begin{aligned}
	&	y_{l_1+1} < \frac{\psi_0(\eee^{-(x-u)})}{y_1\cdot\ldots\cdot y_{l_1}};\quad \left(\text{the rhs is}
<1~~ \text{because
}~ \psi_0(\eee^{-(x-u)})<y_1\cdot\ldots\cdot y_{l_1} \right)\\
	&y_{l_1}>\frac{\psi_0(\eee^{-(x-u)})}{y_1\cdot\ldots\cdot y_{l_1-1}};\\
	&y_{n}>\frac{\psi_0(\eee^{-(x-u)})}{y_1\cdot\ldots\cdot y_{n-1}}~~ \text{for }l_2+2\le n\le l_1-1;\quad\left(\text{follows from }\frac{\psi_0(\eee^{-(x-u)})}{y_1\cdot\ldots\cdot y_{n}}\leq \frac{\psi_0(\eee^{-(x-u)})}{y_1\cdot\ldots\cdot y_{l_1}} <1\right)\\
	&\frac{\psi_0(\eee^{-(x-u)})}{y_1\cdot\ldots\cdot y_{l_2}}<y_{l_2+1}<\frac{\psi_0(\eee^{-(x-v)})}{y_1\cdot\ldots\cdot y_{l_2}};\\
	&y_{n}>\frac{\max(\psi_0(\eee^{-(x-v)}),\psi_0(\eee^{-(x-u)}))}{y_1\cdot\ldots\cdot y_{n-1}}~~ \text{for }1\le n\le l_2;\quad\left(\text{follows from }\frac{\psi_0(\eee^{-(x-v)})}{y_1\cdot\ldots\cdot y_{n}}<1\right).
\end{aligned}
\right.$$
The fourth inequality only holds if $u>v$. If $u\le v$, the integral is equal to $0$.
Thus,
for $u>v$,
\begin{multline*}\label{eq:aux}
	\int_{[0,\,1]^{l_1+1}
} \ind_{\left\{y_1\cdot \ldots\cdot y_{l_1+1} < \psi_0(\eee^{-(x-u)})<y_1\cdot\ldots\cdot y_{l_1}\right\}} \ind_{\left\{y_1\cdot\ldots\cdot y_{l_2+1} < \psi_0(\eee^{-(x-v)})<y_1\cdot\ldots\cdot y_{l_2}\right\}}  \dd y_1\ldots\dd y_{l_1+1}\\=\psi_0(\eee^{-(x-u)})\frac{\eee^{-x(l_1-l_2)}(\eee^u-\eee^v)^{l_1-l_2}}{(l_1-l_2)!}\,\frac{\eee^{(v-x)l_2}}{l_2!}.
\end{multline*}
In view of
\eqref{indicator}, we obtain, for $u>v$,
$$\me X_{l_1}(u)X_{l_2}(v)=\int_{\R}\left( \psi_0(\eee^{-(x-u)})\frac{\eee^{-x(l_1-l_2)}(\eee^u-\eee^v)^{l_1-l_2}}{(l_1-l_2)!}\,\frac{\eee^{(v-x)l_2}}{l_2!}- \psi_{l_1}(\eee^{-(x-u)})\psi_{l_2}(\eee^{-(x-v)}) \right) \dd x$$
and, for $u\le v$,
$$\me X_{l_1}(u)X_{l_2}(v)=-\int_{\R} \psi_{l_1}(\eee^{-(x-u)})\psi_{l_2}(\eee^{-(x-v)}) \dd x.$$
Changing the variable
$t:= \eee^{-x}$ we finally conclude that,
for $u>v$,
\begin{equation*}\label{ff3}
\me X_{l_1}(u)X_{l_2}(v)=\frac{1}{l_1}\binom{l_1}{l_2}\big(1-e^{v-u}\big)^{l_1-l_2}e^{(v-u)l_2}-\frac{1}{l_1+l_2}\binom{l_1+l_2}{l_2}
\frac{e^{(v-u)l_2}}{(1+e^{v-u})^{l_1+l_2}}
\end{equation*}
and for $u\le v$
\begin{equation*}
	\me X_{l_1}(u)X_{l_2}(v)=-\frac{1}{l_1+l_2}\binom{l_1+l_2}{l_2}
\frac{e^{(v-u)l_2}}{(1+e^{v-u})^{l_1+l_2}},
\end{equation*}
that is, \eqref{eq:crossX} holds. The proof of Theorem \ref{intX} is complete.

\subsection{Proof of Theorem \ref{intZ}}

In view of \eqref{eq:repr2}, since $Z_1$ and $X_1,\ldots X_{l-1}$ are centered Gaussian process, so is $Z_l$.

In order to show that the covariance of $Z_l$ defined in \eqref{eq:repr2} is given by \eqref{eq:covZ}, we use a formula which follows from \eqref{eq:repr2}: for $u,v\in\mr$ and $r\in\mn$,
\begin{equation}\label{cov:l}
	\me Z_r(u)Z_r(v)=\me Z_1(u)Z_1(v) - \sum_{k=1}^{r-1}\me Z_1(u)X_k(v)- \sum_{k=1}^{r-1}\me Z_1(v)X_k(u)+ \sum_{i=1}^{r-1}\sum_{k=1}^{r-1}\me X_i(u)X_k(v).
\end{equation}
As a preparation, we show that, for $k\in\mn$,
\begin{equation}\label{eq:crossX2}
-\me Z_1(u) X_k(v)=\frac{1}{k}\Big(\big((1-e^{u-v})_+\big)^k-\frac{1}{(1+e^{u-v})^k}\Big),\quad u,v\in\mr.
\end{equation}
To this end, define the process $X_0$ by formula \eqref{eq:repr} with $l=0$. Then $Z_1=-X_0$ and $-\me Z_1(u)X_k(v)=\me X_0(u)X_k(v)$. Formula \eqref{eq:crossX} extends to $l_2=0$ as a perusal of its proof reveals. Equality \eqref{eq:crossX2} is nothing else but \eqref{eq:crossX} with $u$ and $v$ interchanged, $l_1=k$ and $l_2=0$.

Below we only treat the case $u\leq v$. The proof for the complementary case $u>v$ is analogous. Using \eqref{cov:l} for the first equality and then \eqref{eq:crossX2} and \eqref{eq:crossX}
we infer
\begin{multline*}
	\me Z_{r+1}(u)Z_{r+1}(v)-\me Z_{r}(u)Z_{r}(v)=- \me Z_1(u)X_r(v)- \me Z_1(v)X_r(u)+\me X_r(u)X_r(v)\\
	+\sum_{k=1}^{r-1}\me X_r(u)X_k(v) + \sum_{i=1}^{r-1}\me X_r(v)X_i(u)
	=\frac{1}{r}\Big((1-\eee^{u-v})^r-\frac{1}{(1+\eee^{u-v})^r}\Big)- \frac{1}{r} \frac{1}{(1+e^{v-u})^r}\\+\frac{1}{r} \eee^{(u-v)r}-\frac{(2r-1)!\eee^{(u-v)r}}{(r!)^2(1+\eee^{(u-v)})^{2r}}-\sum_{k=1}^{r-1}
\frac{1}{r+k}\binom{r+k}{k}\frac{e^{(v-u)k}}{(1+e^{v-u})^{r+k}}\\+\sum_{i=1}^{r-1}\Big(\frac{1}{r}\binom{r}{i} (1-\eee^{u-v})^{r-i}\eee^{(u-v)i}-\frac{1}{r+i}\binom{r+i}{i}\frac{\eee^{(u-v)i}}{(1+\eee^{u-v})^{r+i}}\Big).
\end{multline*}
By the binomial theorem,
$$(1-\eee^{u-v})^r+\eee^{(u-v)r}+\sum_{i=1}^{r-1} \binom{r}{i} (1-\eee^{u-v})^{r-i}\eee^{(u-v)i}=1.$$
By Lemma \ref{binomial} with $a=\eee^u$, $b=\eee^v$ and $l=r$,
\begin{multline*}
\frac{1}{r}\frac{1}{(1+\eee^{u-v})^r}+\frac{1}{r} \frac{1}{(1+e^{v-u})^r}+\sum_{k=1}^{r-1}
\frac{1}{r+k}\binom{r+k}{k}\frac{e^{(v-u)k}}{(1+e^{v-u})^{r+k}}\\+\sum_{i=1}^{r-1}
\frac{1}{r+i}\binom{r+i}{i}\frac{\eee^{(u-v)i}}{(1+\eee^{u-v})^{r+i}}=\frac{1}{r}.
\end{multline*}
As a consequence, $$\me Z_{r+1}(u)Z_{r+1}(v)-\me Z_r(u)Z_r(v)=-\frac{(2r-1)!\eee^{(u-v)r}}{(r!)^2 (1+\eee^{(u-v)})^{2r}}.$$
According to Theorem 1.6 in \cite{Iksanov+Kabluchko+Kotelnikova:2021}, $$\me Z_1(u)Z_1(v)=\log(1+\eee^{-|u-v|}),\quad u,v\in\mr.$$ Using this in combination with $$\me Z_l(u)Z_l(v)=\me Z_1(u)Z_1(v)+\sum_{r=1}^{l-1}\Big(\me Z_{r+1}(u)Z_{r+1}(v)-\me Z_r(u)Z_r(v)\Big)$$ we arrive at \eqref{eq:covZ} in the case $u\leq v$.

Stationarity of the Gaussian process $Z_l$ follows from \eqref{eq:covZ}.

Our next task is to prove that the cross-covariances of $(Z_l)_{l\in\mn}$ defined in \eqref{eq:repr2} are given by~\eqref{eq:cross1Z}.
First, we show
that, for $u,v\in\mr$ and $r\in\mn_0$,
\begin{multline}\label{f}
\me Z_l(u)Z_{l+r+1}(v)-\me Z_l(u)Z_{l+r}(v)\\=\sum_{i=0}^{l-1}\Big(\frac{1}{l+r} \binom{l+r}{i} \big((1-\eee^{u-v})_+\big)^{l+r-i}\eee^{(u-v)i}-\frac{1}{l+r+i} \binom{l+r+i}{i}\frac{\eee^{(u-v)i}}{(1+\eee^{u-v})^{l+r+i}}\Big).
\end{multline}
Using \eqref{eq:repr2} we obtain, for $u,v\in\mr$ and $r\in\mn$,
\begin{equation*}
	\me Z_l(u)Z_{l+r}(v)=\me Z_1(u)Z_1(v) - \sum_{k=1}^{l+r-1}\me Z_1(u)X_k(v)- \sum_{k=1}^{l-1}\me X_k(u)Z_1(v)+ \sum_{i=1}^{l-1}\sum_{k=1}^{l+r-1}\me X_i(u)X_k(v)
\end{equation*}
and thereupon
$$\me Z_l(u)Z_{l+r+1}(v)-\me Z_l(u)Z_{l+r}(v)=-\me Z_1(u)X_{l+r}(v)+\sum_{i=1}^{l-1}\me X_i(u)X_{l+r}(v).$$
In view of \eqref{eq:crossX} and \eqref{eq:crossX2},
\begin{multline*}
\me Z_l(u)Z_{l+r+1}(v)-\me Z_l(u)Z_{l+r}(v)=\frac{1}{l+r}\Big(\big((1-\eee^{u-v})_+\big)^{l+r}-\frac{1}{(1+\eee^{u-v})^{l+r}}\Big)\\+
\sum_{i=1}^{l-1}\Big(\frac{1}{l+r} \binom{l+r}{i} \big((1-\eee^{u-v})_+\big)^{l+r-i}\eee^{(u-v)i}-\frac{1}{l+r+i} \binom{l+r+i}{i}\frac{\eee^{(u-v)i}}{(1+\eee^{u-v})^{l+r+i}}\Big)\\=
\sum_{i=0}^{l-1}\Big(\frac{1}{l+r} \binom{l+r}{i} \big((1-\eee^{u-v})_+\big)^{l+r-i}\eee^{(u-v)i}-\frac{1}{l+r+i} \binom{l+r+i}{i}\frac{\eee^{(u-v)i}}{(1+\eee^{u-v})^{l+r+i}}\Big),
\end{multline*}
that is, \eqref{f} holds.
Invoking \eqref{eq:covZ} and \eqref{f} together with $$\me Z_l(u)Z_{l+n}(v)=\me Z_l(u)Z_l(v)+\sum_{r=0}^{n-1}\Big(\me Z_l(u)Z_{l+r+1}(v)-\me Z_l(u)Z_{l+r}(v)\Big)$$ proves \eqref{eq:cross1Z}.

It remains to justify the claim about H\"{o}lder continuity. Observe that
$$\log 2-\log (1+e^{-|x|})~\sim~ \frac{|x|}{2},\quad x\to 0$$ and that, for $x\in\mr$ and $k\in\mn$,
$$\frac{1}{2^{2k}}-\frac{\eee^{-|x|k}}{(1+\eee^{-|x|})^{2k}}\geq 0.$$
Hence, according to \eqref{eq:covZ}, for $u,v\in\mr$, $|u|\leq 1$ and some constant $C_1>0$,
\begin{multline*}
\me(Z_l(u+v)-Z_l(v))^2=2\Big(\log 2-\log\big(1+\eee^{-|u|}\big)-\sum_{k=1}^{l-1} \frac{(2k-1)!}{(k!)^2}\Big(\frac{1}{2^{2k}}-\frac{\eee^{-|u|k}}{(1+\eee^{-|u|})^{2k}}\Big)\Big)\\\leq 2\big(\log 2-\log\big(1+\eee^{-|u|}\big)\big) \leq C_1|u|.
\end{multline*}
Since the random variable $Z_l(u+v)-Z_l(v)$ has a centered normal distribution we further infer, for each $n\in\mn$,
$$\me(Z_l(u+v)-Z_l(v))^{2n}=(2n-1)!!\Big(\me(Z_l(u+v)-Z_l(v))^2)\Big)^n \le(2n-1)!! C_1^n |u|^n.
$$
After noting that $\lim_{n\to\infty}(n-1)/(2n)=1/2$ and that $(n-1)/(2n)\leq 1/2$, the claim follows by an appeal to the Kolmogorov-Chentsov theorem. The proof of Theorem \ref{intZ} is complete.

\subsection{Proof of Theorem \ref{main}}

It is not a priori obvious that the matrix-valued limit process $(Z_l^{(j)})_{j,l\in\mn}$ exists. Since the rows $(Z_l^{(1)})_{l\in\mn}$, $(Z_l^{(2)})_{l\in\mn},\ldots$ are assumed independent and identically distributed, the existence of $(Z_l^{(j)})_{j,l\in\mn}$ is ensured by the existence of generic row process $(Z_l)_{l\in\mn}$, which is proved in Theorem~\ref{intZ}. Alternatively, the existence of $(Z_l)_{l\in\mn}$ will follow from the subsequent proof.

\noindent {\sc Step 1}. We start by proving
\eqref{flc} for one coordinate. Fix $j,l\in\mn$. Our purpose is to show that
$${\bf K}_l^{(j)}(T)~\Rightarrow~Z_l^{(j)},\quad T\to\infty$$
in the $J_1$-topology on $D$. While doing so, we employ the standard two-steps technique of
proving weak convergence of finite-dimensional distributions followed by checking sufficient moment conditions for tightness.

\noindent{\it Weak convergence of
finite-dimensional distributions}. According to the Cram\'{e}r-Wold device, weak convergence of the finite-dimensional distributions is equivalent to the following limit relation
\begin{equation}    \label{eq:Cramer-Wold device}
	\sum_{i=1}^k \alpha_i {\bf K}_l^{(j)}(T,u_i)~{\overset{{\rm d}}\longrightarrow}~ \sum_{i=1}^k \alpha_i Z_l^{(j)}(u_i),\quad T\to\infty
\end{equation}
for all $k\in\mn$, all $\alpha_1,\ldots, \alpha_k\in\mr$ and all
$-\infty<u_1<\ldots<u_k<\infty$.

For $u, T\in\mr$ and the box $\rr$, put
$$\tilde B_{\rr, l}(T,u):=\1_{\{\pi_{\rr}(\eee^{T+u})\ge l\}}-\mmp\{\pi_{\rr}(\eee^{T+u})\ge l\},$$ where $\pi_{\rr}(\eee^{T+u})$ is the number of balls in the box ${\rr}$ in the Poissonized version at time $\eee^{T+u}$. Since
\begin{equation} \label{poiss}
\sum_{|{\rr}|=j} \1_{\{\pi_{\rr}(\eee^{T+u})\ge l\}} = K_{\eee^{T+u}}^{(j)}(l),
\end{equation}
the left-hand side of \eqref{eq:Cramer-Wold device} is equal to
$$\frac{\sum_{|{\rr}|=j}\sum_{i=1}^k \alpha_i\tilde B_{\rr, l}(T,u_i)}{(c_jf_j(T))^{1/2}}.$$
By the thinning property of Poisson processes, the summands are independent centered random variables with finite second moments. Hence, we prove \eqref{eq:Cramer-Wold device} by checking sufficient conditions provided by the Lindeberg-Feller theorem:
\begin{equation} \label{eq:CLT1}
	\lim_{T\to\infty}\me\Big(\sum_{i=1}^k \alpha_i {\bf K}_l^{(j)}(T,u_i)\Big)^2=\me\Big(\sum_{i=1}^k \alpha_i Z_l^{(j)}(u_i)\Big)^2
\end{equation}
and, for all $\varepsilon>0$,
\begin{equation} \label{eq:CLT2}
\lim_{T\to\infty} \sum_{|{\rr}|=j}\me\Big( \frac{(\sum_{i=1}^k \alpha_i \tilde B_{{\rr}, l}(T,u_i))^2}{c_jf_j(T)}\1_{\{|\sum_{i=1}^k \alpha_i \tilde B_{{\rr}, l}(T,u_i)|>\varepsilon (c_jf_j(T))^{1/2}\}}\Big)=0.
\end{equation}

It follows from \eqref{eq:covZ} that
\begin{multline*}
	\me\Big(\sum_{i=1}^k \alpha_i Z_l^{(j)}(u_i)\Big)^2=
	\sum_{i=1}^k\alpha_i^2\left(\log2-\sum_{k=1}^{l-1} \frac{(2k-1)!}{(k!)^2 2^{2k}} \right)\\
	+2\sum_{1\leq i<\ell\leq k}\alpha_i  \alpha_\ell \left( \log(1+\eee^{-(u_\ell-u_i)})-\sum_{k=1}^{l-1} \frac{(2k-1)! \eee^{(u_\ell-u_i)k}}{(k!)^2 (1+\eee^{(u_\ell-u_i)})^{2k}}\right).
\end{multline*}
Hence, \eqref{eq:CLT1} is secured by Proposition \ref{cov:fixedlevel}.

In view of the inequality
\begin{eqnarray}\label{eq:tech}
	(a_1+\ldots+a_k)^2\1_{\{|a_1+\ldots+a_k|>y\}}&\leq&
	(|a_1|+\ldots+|a_k|)^2\1_{\{|a_1|+\ldots+|a_k|>y\}}\notag\\&\leq&
	k^2 (|a_1| \vee\ldots\vee |a_k|)^2\1_{\{k(|a_1| \vee\ldots\vee
		|a_k|)>y\}}\notag\\&\leq&
	k^2\big(a_1^2\1_{\{|a_1|>y/k\}}+\ldots+a_k^2\1_{\{|a_k|>y/k\}}\big),
\end{eqnarray}
which holds for real $a_1,\ldots, a_m$ and $y>0$, relation \eqref{eq:CLT2} is a consequence of
\begin{equation}\label{r1}
	\lim_{T\to\infty} \sum_{|{\rr}|=j}\me \Big( \frac{(\tilde B_{{\rr}, l}(T,u))^2}{c_jf_j(T)}\1_{\{|\tilde B_{{\rr}, l}(T,u)|>\varepsilon (c_jf_j(T))^{1/2}\}}\Big)=0,
\end{equation}
where $u\in\R$ is fixed.
As $|\tilde B_{{\rr}, l}(T,u)|\leq 1$ a.s.\ and $f_j
$ diverges to infinity, the indicator $\1_{\{|\tilde B_{{\rr}, l}(T,u)|>\varepsilon (c_jf_j(T))^{1/2}\}}$ is equal to $0$ for large $T$. Thus, \eqref{r1} does indeed hold.

\noindent{\it Tightness}. We intend to prove that the family of distributions of the stochastic processes $({\bf K}_l^{(j)}(T))_{T\in\mr}$ is tight on the Skorokhod space $D[-A,\,A]$ for any fixed $A>0$. To this end, we shall show that there is a constant $C>0$ such that
\begin{equation}\label{eq:tight}
	\me ({\bf K}_l^{(j)}(T,v)-{\bf K}_l^{(j)}(T,u))^2 ({\bf K}_l^{(j)}(T,w)-{\bf K}_l^{(j)}(T,v))^2\leq C  (w-u)^2
\end{equation}
for all $u<v<w$ in the interval $[-A,\,A]$ and large $T>0$ (see Theorem 13.5 and formula (13.14) on p. 143 in \cite{Billingsley:1999}).

Recall that $T_{{\rr}, l}$ is the time at which the box $\rr$ is filled for the $l$th time.
In view of \eqref{poiss} and $\{\pi_{\rr}(t)\ge l\}=\{T_{{\rr}, l}\le t\}$, for $u<v$,
\begin{equation} \label{1}
	{\bf K}_l^{(j)}(T,v)-{\bf K}_l^{(j)}(T,u)=\frac{\sum_{|{\rr}|=j} \left(\1\{\eee^{T+u}<T_{{\rr}, l}\le\eee^{T+v}\}-\mmp\{\eee^{T+u}<T_{{\rr}, l}\le\eee^{T+v}\}\right)}{(c_jf_j(T))^{1/2}}.
\end{equation}
For the box $\rr$, put $$L_{{\rr}, l}:=\1\{\eee^{T+u}<T_{{\rr}, l}\le\eee^{T+v}\}, \quad M_{{\rr}, l}:=\1\{\eee^{T+v}<T_{{\rr}, l}\le\eee^{T+w}\}$$
and also introduce the corresponding centered random variables
$$\widetilde L_{{\rr}, l}:= L_{{\rr}, l} - \E L_{{\rr}, l}, \qquad \widetilde M_{{\rr}, l} := M_{{\rr}, l} - \E M_{{\rr}, l}.$$
All these random variables depend on $u, v, w$ and $T$, but for the simplicity of notation we suppress this dependence.
Put
$$q_{{\rr}, l}:= \mmp\{L_{{\rr}, l} = 1\} = \E L_{{\rr}, l}, \qquad  z_{{\rr}, l}:= \mmp\{M_{{\rr}, l} = 1\} = \E M_{{\rr}, l}$$
and notice that $$q_{{\rr}, l}= \mmp\{\eee^{T+u}<T_{{\rr}, l}\le\eee^{T+v}\}=\mmp\{T+u+\log p_{\rr}<G_{{\rr}, l}\le T+v+\log p_{\rr}\},$$
where $G_{{\rr},l}$ is the random variable defined in Section \ref{sect:prob} by
$G_{{\rr},l}=\log T_{{\rr},l}+\log p_{\rr}$. According to Lemma \ref{lem:density},
\begin{equation}\label{estimate:q}
	q_{{\rr}, l}\le C_l(v-u)\eee^{-|T+\log p_{\rr}|}.
\end{equation}
Analogously,
\begin{equation}\label{estimate:z}
	z_{{\rr}, l}\le C_l(w-v)\eee^{-|T+\log p_{\rr}|}.
\end{equation}
According to \eqref{1}, to prove \eqref{eq:tight} it suffices to check that,
for all $u<v<w$ in the interval $[-A,\,A]$ and large $T>0$,
$$\E \Big(\sum_{|{\rr}_1|=j}\widetilde L_{{\rr_1}, l}\Big)^2 \Big(\sum_{|{\rr}_2|=j} \widetilde M_{{\rr_2},l}\Big)^2 \leq C (w-u)^2 \big(c_jf_j(T)\big)^2.$$
Multiplying the terms out, our task reduces to showing
that
$$\sum_{\rr_1,\rr_2,\rr_3,\rr_4\in \N^j}\E \left(\widetilde L_{{\rr_1},l}\widetilde L_{{\rr_3},l}\widetilde M_{{\rr_2},l}\widetilde M_{{\rr_4},l} \right) \leq C (w-u)^2 \big(c_jf_j(T)\big)^2.$$
If ${\rr}_1$ is not equal to any of tuples ${\rr_2},{\rr_3},{\rr_4}$, then $\widetilde L_{{\rr_1},l}$ is independent of the vector $(\widetilde L_{{\rr_3},l}, \widetilde M_{{\rr_2},l}, \widetilde M_{{\rr_4},l})$. Since $\E \widetilde L_{{\rr_1},l}=0$, then taking $\widetilde L_{{\rr_1},l}$ out of the expectation yields
$\E (\widetilde L_{{\rr_1},l}\widetilde L_{{\rr_3},l}\widetilde M_{{\rr_2},l}\widetilde M_{{\rr_4},l}) = 0$.
More generally, the expectation vanishes whenever
one of the tuples $\rr_1,\rr_2,\rr_3,\rr_4$ is not equal to any of the remaining ones.
In the following, we shall consider collections $(\rr_1,\rr_2,\rr_3,\rr_4)$ in which every tuple is equal to some other tuple.

\noindent \textsc{Case ${\rr_1}\neq {\rr_3}$
}
in which either ${\rr_2} ={\rr_1}$ and ${\rr_4} ={\rr_3}$, or ${\rr_2} ={\rr_3}$ and ${\rr_4} ={\rr_1}$. We only analyze the first option, for the second can be dealt with
similarly. The corresponding contribution is
$$
\sum_{\rr_1\neq \rr_3} \E \left(\widetilde L_{{\rr_1},l}\widetilde L_{{\rr_3},l}\widetilde M_{{\rr_1},l}\widetilde M_{{\rr_3},l}\right)
=
\sum_{\rr_1\neq \rr_3} \E \left(\widetilde L_{{\rr_1},l}\widetilde M_{{\rr_1},l}\right) \E \left( \widetilde L_{{\rr_3},l}\widetilde M_{{\rr_3},l}\right).
$$
Since $L_{{\rr_1},l}$ and $M_{{\rr_1},l}$ cannot be equal to $1$ simultaneously,
$$\E \left(\widetilde L_{{\rr_1},l}\widetilde M_{{\rr_1},l}\right)
=-\me L_{{\rr_1},l}\me M_{{\rr_1},l}=-q_{{\rr_1},l}z_{{\rr_1},l}.$$
Analogously, $\E \left(\widetilde L_{{\rr_3},l}\widetilde M_{{\rr_3},l}\right)=-q_{{\rr_3},l}z_{{\rr_3},l}$.
It follows that
$$
\sum_{\rr_1\neq \rr_3} \E \left(\widetilde L_{{\rr_1},l}\widetilde L_{{\rr_3},l}\widetilde M_{{\rr_1},l}\widetilde M_{{\rr_3},l}\right)=\sum_{\rr_1\neq \rr_3} q_{{\rr_1},l}z_{{\rr_1},l}q_{{\rr_3},l}z_{{\rr_3},l}\leq \sum_{|{\rr}|=j}q_{{\rr},l}\sum_{|{\rr}|=j}z_{{\rr},l}.
$$
In view of \eqref{estimate:q} and \eqref{estimate:z},
$$
	\sum_{|{\rr}|=j}q_{{\rr},l}\sum_{|{\rr}|=j}z_{{\rr},l}\leq C_l^2 (w-u)^2 \Big(\sum_{|{\rr}|=j}\eee^{-|T+\log p_{\rr}|}\Big)^2
$$
for all $u<v<w$ in the interval $[-A,\,A]$. Invoking Corollaries 4.5 and 4.7 from \cite{Iksanov+Kabluchko+Kotelnikova:2021}, we obtain
$$\sum_{|{\rr}|=j}\eee^{-|T+\log p_{\rr}|}\sim 2c_jf_j(T), \quad T\to\infty.$$
Hence,
for large $T>0$,
\begin{equation}\label{estimate:qz}
\sum_{|{\rr}|=j} q_{{\rr},l}\sum_{|{\rr}|=j}z_{{\rr},l}\le 8
C_l^2 (w-u)^2(c_jf_j(T))^2.
\end{equation}

\noindent \textsc{Case ${\rr}_1 ={\rr}_3$}
in which necessarily
$\rr_2=\rr_4$, for otherwise the expectation $\E (\widetilde L_{{\rr_1},l}\widetilde L_{{\rr_3},l}\widetilde M_{{\rr_2},l}\widetilde M_{{\rr_4},l})$ vanishes. We estimate the corresponding contribution as follows
$$\sum_{r_1, r_2\in\mn^j}\E \Big(\widetilde L_{{\rr_1},l} \widetilde L_{{\rr_1},l} \widetilde M_{{\rr_2},l}\widetilde M_{{\rr_2},l}\Big)=\sum_{\rr_1\neq \rr_2} \E \Big(\widetilde L_{{\rr_1},l}^2 \Big) \E \Big(\widetilde M_{{\rr_2},l}^2\Big)+\sum_{|{\rr}|=j}\E \Big(\widetilde L_{{\rr},l}^2\widetilde M_{{\rr},l}^2\Big).$$
We have used the fact that independence of $T_{{\rr}_1, l}$ and $T_{{\rr}_2, l}$ for ${\rr}_1\neq {\rr}_2$, $|{\rr}_1|=|{\rr_2}|=j$ (see Section~\ref{sect:prob}) entails that of $L_{{\rr_1},l}$ and $M_{{\rr_2},l}$. Using the inequalities $\E (\widetilde L_{{\rr_1},l}^2)=q_{{\rr_1},l}(1-q_{{\rr_1},l})\leq q_{{\rr_1},l}$, $\E (\widetilde M_{{\rr_2},l}^2)=z_{{\rr_2},l}(1-z_{{\rr_2},l})\leq z_{{\rr_2},l}$ and
$$\E \Big(\widetilde L_{{\rr},l}^2  \widetilde M_{{\rr},l}^2\Big)=q_{{\rr},l}z_{{\rr},l}(q_{{\rr},l}+z_{{\rr},l}-3q_{{\rr},l}z_{{\rr},l}) \leq 2 q_{{\rr},l}z_{{\rr},l}
$$ we infer
$$\sum_{r_1, r_2\in\mn^j}\E \left(\widetilde L_{{\rr_1},l} \widetilde L_{{\rr_1},l} \widetilde M_{{\rr_2},l}\widetilde M_{{\rr_2},l}\right)
\leq \sum_{\rr_1\neq \rr_2}q_{{\rr_1},l}z_{{\rr_2},l}
+ 2\sum_{|{\rr}|=j} q_{{\rr},l}z_{{\rr},l}
\leq 2\sum_{|{\rr}|=j}q_{{\rr},l}\sum_{|{\rr}|=j}z_{{\rr},l}.
$$
In view of \eqref{estimate:qz},
$$\sum_{r_1, r_2\in\mn^j}\E \Big(\widetilde L_{{\rr_1},l} \widetilde L_{{\rr_1},l} \widetilde M_{{\rr_2},l}\widetilde M_{{\rr_2},l}\Big)
\leq 16
C_l^2 (w-u)^2\big(c_jf_j(T)\big)^2
$$
for all $u<v<w$ in the interval $[-A,\,A]$ and large $T>0$.

\vspace{5mm}
\noindent {\sc Step 2}.
We intend to prove \eqref{flc}. We have already checked
that, with $j,l\in\mn$ fixed, the family of distributions of the processes ${\bf K}_l^{(j)}(T)$, $T\in\mr$ is tight on $D$.
Hence, the family of laws of the stochastic processes $\big({\bf K}_l^{(j)}(T)\big)_{j,l\in\mn}$, $T\in\mr$ is tight on $D^{\mn\times\mn}$ equipped with the product $J_1$-topology. Thus, we are left with showing weak convergence of the finite-dimensional distributions. According to the Cram\'{e}r-Wold device, this boils down to proving weak convergence of linear combinations of the coordinates on the left-hand side of \eqref{flc} to the corresponding linear combinations of the coordinates on the right-hand side of \eqref{flc}, that is, for all $k\in\mn$, all $\alpha_{jl}\in\mr$ and all $u_{jl}\in\mr$, $j,l\in [k]=\{1,2,\ldots, k\}$,
\begin{equation}\label{eq:finconv}
	\sum_{j=1}^k \sum_{l=1}^k \alpha_{jl} {\bf K}_l^{(j)}(T,u_{jl})~{\overset{{\rm d}}\longrightarrow}~ \sum_{j=1}^k \sum_{l=1}^k \alpha_{jl} Z_l^{(j)}(u_{jl}),\quad T\to\infty.
\end{equation}
The left-hand side of \eqref{eq:finconv} can be represented as the infinite sum of independent centered random variables with finite second moments:
\begin{multline*}
		\sum_{j=1}^k \sum_{l=1}^k \alpha_{jl} {\bf K}_l^{(j)}(T,u_{jl})\\
		=\sum_{|{\rr}_1|=1}\Big(\sum_{l=1}^k\frac{\alpha_{1l} \tilde B_{{{\rr}_1},l}(T,u_{1l})}{(c_1f_1(T))^{1/2}}+\sum_{l=1}^k\frac{\alpha_{2l}\sum_{|{\rr}_2|=1}\tilde B_{{\rr}_1{\rr}_2,l}(T,u_{2l})}{(c_2f_2(T))^{1/2}}+\ldots\\+\sum_{l=1}^k\frac{\alpha_{kl}\sum_{|{\rr}_k|=k-1}\tilde B_{{\rr}_1{\rr}_k,l}(T,u_{kl})}{(c_kf_k(T))^{1/2}} \Big),
\end{multline*}
where a box ${\rr}_1{\rr}_i$ with $|{\rr}_i|=i-1$ is a successor of ${\rr}_1$ in the $i$th generation. Note that, for the given ${\rr}_1$, the variables $\sum_{l=1}^k\tilde B_{{\rr}_1,l}(T,u_{1l})$, $\sum_{l=1}^k\sum_{|{\rr}_2|=1}\tilde B_{{\rr}_1{\rr}_2,l}(T,u_{2l}),\ldots$ are dependent, yet the terms of the series (which correspond to different ${\rr}_1$) are independent. By another appeal to the Lindeberg-Feller theorem, \eqref{eq:finconv} follows if we can show that
\begin{align}\label{eq:limi}
	\lim_{T\to\infty}\me\Big(\sum_{j=1}^k \sum_{l=1}^k \alpha_{jl} {\bf K}_l^{(j)}(T,u_{jl})\Big)^2=\me\Big(\sum_{j=1}^k \sum_{l=1}^k \alpha_{jl} Z_l^{(j)}(u_{jl})\Big)^2
\end{align}
and that, for all $\varepsilon>0$,
\begin{multline}\label{eq2}
	\lim_{T\to\infty} \sum_{|{\rr}_1|=1} \me \Big(\sum_{l=1}^k\frac{\alpha_{1l} \tilde B_{{{\rr}_1},l}(T,u_{1l})}{(c_1f_1(T))^{1/2}}+\ldots+\sum_{l=1}^k\frac{\alpha_{kl}\sum_{|{\rr}_k|=k-1}\tilde B_{{\rr}_1{\rr}_k,l}(T,u_{kl})}{(c_kf_k(T))^{1/2}}\Big)^2\\
	\times
	\1_{\Big\{\Big|\sum_{l=1}^k\frac{\alpha_{1l} \tilde B_{{{\rr}_1},l}(T,u_{1l})}{(c_1f_1(T))^{1/2}}+\ldots+\sum_{l=1}^k\frac{\alpha_{kl}\sum_{|{\rr}_k|=k-1}\tilde B_{{\rr}_1{\rr}_k,l}(T,u_{kl})}{(c_kf_k(T))^{1/2}} \Big|>\varepsilon\Big\}}=0.
\end{multline}

Formula \eqref{eq:limi} is secured by Propositions
\ref{cross} and \ref{independ:ln}. In view of \eqref{eq:tech}, relation \eqref{eq2} follows once we have proved that, for each $l\in [k]$,
\begin{equation}\label{r11}
	\lim_{T\to\infty} \sum_{|{\rr}_1|=1} \me \Big( \frac{(\tilde B_{{{\rr}_1},l}(T,u_{1l}))^2}{c_1f_1(T)}
	\1_{\{|\tilde B_{{{\rr}_1},l}(T,u_{1l})|>\varepsilon (c_1f_1(T))^{1/2}\}}\Big)=0
\end{equation}
and that, for each $l\in [k]$
and each $j\in \{2,3,\ldots, k\}$,
\begin{equation}\label{2jk}
	\lim_{T\to\infty} \sum_{|{\rr}_1|=1} \me \Big(\frac{(\sum_{|{\rr}_j|=j-1}\tilde B_{{\rr}_1{\rr}_j,l}(T,u_{jl}))^2}{c_jf_j(T)}
	\1_{\{|\sum_{|{\rr}_j|=j-1}\tilde B_{{\rr}_1{\rr}_j,l}(T,u_{jl})|>\varepsilon (c_jf_j(T))^{1/2}\}}\Big)=0.
\end{equation}
Relation \eqref{r11} has already been proved, see
formula \eqref{r1} with $j=1$. Fix $j\in\{2,\ldots, k\}$.
Since the function $f_j$ is regularly varying,
\eqref{2jk} is equivalent to
\begin{equation}\label{2jk0}
	\lim_{T\to\infty} \sum_{|{\rr}_1|=1} \me \Big(\frac{(\sum_{|{\rr}_2|=j-1}\tilde B_{{\rr}_1{\rr}_2,l}(T,0))^2}{c_jf_j(T)}
	\1_{\{|\sum_{|{\rr}_2|=j-1}\tilde B_{{\rr}_1{\rr}_2,l}(T,0)|>\varepsilon (c_jf_j(T))^{1/2}\}}\Big)=0.
\end{equation}
We are going to prove
\begin{equation}\label{2jk0eq}
\lim_{T\to\infty} \sum_{|{\rr}_1|=1} \frac{\me\big(\sum_{|{\rr}_2|=j-1}\tilde B_{{\rr}_1{\rr}_2,l}(T,0)\big)^4}{(c_jf_j(T))^{2}} =0
\end{equation}
which entails formula \eqref{2jk0}. To this end, put $$a_{{\rr}_1{\rr}_2,l}(T):=\mmp\{\pi_{{\rr}_1{\rr}_2}(\eee^T)\geq l\}=1-\exp(-\eee^T p_{{\rr}_1{\rr}_2})\Big(1+\eee^T p_{\rr_1 \rr_2}+\ldots+\frac{(\eee^T p_{\rr_1 \rr_2})^{l-1}}{(l-1)!}\Big)$$ for ${\rr}_1, {\rr_2}\in \mathcal{R}$, $T\in\mr$ and $l\in\mn$ and write
\begin{multline*}
	\me \Big(\sum_{|{\rr}_2|=j-1}\tilde B_{{\rr}_1{\rr}_2,l}(T,\,0)\Big)^4=\sum_{|{\rr}_2|=j-1} \me \big(\tilde B_{{\rr}_1{\rr}_2,l}(T,\,0))^4\\+3\sum_{|{\rr}_2|=j-1,\, |{\rr}_3|=j-1,\, {\rr_2}\neq {\rr}_3}\me (\tilde B_{{\rr}_1{\rr}_2,l}(T,\,0))^2\me (\tilde B_{{\rr}_1{\rr}_3,l}(T,\,0))^2 \\=\sum_{|{\rr}_2|=j-1}a_{{\rr}_1{\rr}_2,l}(T)\big(1-a_{{\rr}_1{\rr}_2,l}(T)-3a_{{\rr}_1{\rr}_2,l}(T)
(1-a_{{\rr}_1{\rr}_2,l}(T))^2\big)\\+3\sum_{|{\rr}_2|=j-1,\, |{\rr}_3|=j-1,\, {\rr_2}\neq {\rr}_3}a_{{\rr}_1{\rr}_2,l}(T)(1-a_{{\rr}_1{\rr}_2,l}(T))a_{{\rr}_1 {\rr}_3,l}(T)(1-a_{{\rr}_1{\rr}_3,l}(T))\\\leq \sum_{|{\rr}_2|=j-1}(1-a_{{\rr}_1{\rr}_2,l}(T))a_{{\rr}_1{\rr}_2,l}(T)
	+3\Big(\sum_{|{\rr}_2|=j-1}(1-a_{{\rr}_1{\rr}_2,l}(T))a_{{\rr}_1{\rr}_2,l}(T)\Big)^2\\
	={\rm Var}\,K^{(j-1)}_{\eee^T p_{{\rr}_1}}(l)+3({\rm Var}\,K^{(j-1)}_{\eee^T p_{{\rr}_1}}(l))^2\leq (1+3 \me K^{(j-1)}_{\eee^T}(l))  {\rm Var}\,K^{(j-1)}_{\eee^T p_{{\rr}_1}}(l).
\end{multline*}
We have used $\sum_{|{\rr}_2|=j-1}(1-a_{{\rr}_1{\rr}_2,l}(T))a_{{\rr}_1{\rr}_2,l}(T)\leq \sum_{|{\rr}_2|=j-1}a_{{\rr}_1{\rr}_2,l}(T)=\me K^{(j-1)}_{\eee^Tp_{{\rr}_1}}(l)$ and monotonicity of $t\mapsto \me K_t^{(j-1)}(l)$ for the last inequality.
Hence, in view of $K^{(j-1)}_{\eee^T}(l)\leq K^{(j-1)}_{\eee^T}(1)$ a.s.,
\begin{multline*}
\sum_{|{\rr}_1|=1} \me \Big(\sum_{|{\rr}_2|=j-1}\tilde B_{{\rr}_1{\rr}_2,l}(T,\,0)\Big)^4\leq (1+3 \me K^{(j-1)}_{\eee^T}(1)) \sum_{|{\rr}_1|=1}{\rm Var}\,K^{(j-1)}_{\eee^T p_{{\rr}_1}}(l)\\= \big(1+3 \me K^{(j-1)}_{\eee^T}(1)\big)  {\rm Var}\,K^{(j)}_{\eee^T}(l).
\end{multline*}
As a preparation for the remaining part of the proof, recall that, according to Corollary \ref{var},
\begin{equation*}
	{\rm Var}\,K^{(j)}_{\eee^T}(l)~\sim~b_l
c_jf_j(T),\quad T\to\infty,
\end{equation*}
where $b_l=\log 2-\sum_{k=1}^{l-1} \frac{(2k-1)!}{(k!)^2 2^{2k}}$. Left with proving $\me K^{(j-1)}_{\eee^T}(1)=o(f_j(T))$ as $T\to\infty$, observe that
$$\me K^{(j-1)}_{\eee^T}~\sim~\#\{{\rr}\in \mathcal{R}: |{\rr}|=j-1,~~ p_{\rr}\geq \eee^{-T}\}
~\sim~\frac{(\Gamma(\beta+1))^{j-1}}{\Gamma(j+(j-1)\beta)}Tf_{j-1}(T), \quad T\to\infty,$$ where the first relation follows from Theorem 1 in \cite{Karlin:1967} and the second is secured by Proposition~4.3 in \cite{Iksanov+Kabluchko+Kotelnikova:2021}. Finally,
$$
\frac{Tf_{j-1}(T)}{f_j(T)}~\sim~
\frac{1}{T^\beta \ell(T)}~\to~0,\quad T\to\infty$$ which is justified by the assumption
$\lim_{T\to\infty}\ell(T)=\infty$ when $\beta=0$. This finishes the proof of~\eqref{2jk0eq}. The proof of Theorem \ref{main} is complete.

\subsection{Proof of Corollary \ref{main_poiss}}
For $j,n,l\in\mn$ and $t\geq 0$, $K^{(j)}_t(l)=\mathcal{K}^{(j)}_{\pi(t)}(l)$ and, conversely,
\begin{equation}\label{eq:impo}
	\mathcal{K}^{(j)}_n(l)=K^{(j)}_{S_n}(l).
\end{equation}
Note that while $\pi(t)$ and $(\mathcal{K}^{(j)}_n(l))_{n\in\mn}$ are independent, $S_n$ and $(K^{(j)}_t(l))_{t\geq 0}$ are dependent.

For $T\geq 0$ and $u\in\mr$, put ${\bf S}(T,u):=\log(\eee^{-T}S_{\lfloor \eee^{T+u}\rfloor})$ and $\psi(u):=u$. The strong law of large numbers for random walks together with Dini's theorem implies that, for all $a,b\in\mr$, $a<b$,
$$
	\lim_{T\to\infty}\sup_{a\leq u\leq b}|{\bf S}(T,u)-\psi(u)|=0\quad\text{a.s.}
$$
According to Theorem 3.9 on p.~37 in \cite{Billingsley:1999}, this in combination with \eqref{flc} yields
\begin{equation*}\label{eq:KS}
	\big(\big({\bf K}_l^{(j)}(T)\big)_{j,l\in\mn}, {\bf S}(T)\big)~\Rightarrow~\big((Z_l^{(j)})_{j,l\in\mn}, \psi\big)\quad T\to\infty
\end{equation*}
in the product $J_1$-topology on $D^{\mn\times\mn}\times D$, where ${\bf S}(T):=({\bf S}(T,u))_{u\in\mr}$.

According to the continuous mapping theorem (Lemma 2.3 on p.~159 in \cite{Gut:2009}), for fixed $j\in~\mn$, the composition mapping $((x_1,\ldots, x_j), \varphi)\mapsto (x_1\circ\varphi,\ldots, x_j\circ \varphi)$ is continuous at vectors
$(x_1,\ldots, x_j): \mr^j\to \mr^j$ with continuous coordinates and nondecreasing continuous $\varphi: \mr\to \mr$. Since $Z_l^{(j)}$ is a copy of $Z_l$ which can be assumed 
a.s.\ continuous (see Theorem \ref{intZ}), and $\psi$ is
nondecreasing and continuous, we infer with the help of \eqref{eq:impo}
\begin{equation*}\label{eq:vectorj}
	\Big(\Big(\frac{\mathcal{K}^{(j)}_{\lfloor \eee^{T+u}\rfloor }(l)- \Phi_l^{(j)}(S_{\lfloor \eee^{T+u}\rfloor})}{(c_jf_j(T))^{1/2}}\Big)_{u\in\mr}\Big)_{j,l\in\mn}~\Rightarrow~((Z_j(u))_{u\in\mr})_{j,l\in\mn},\quad T\to\infty
\end{equation*}
in the product $J_1$-topology on $D^{\mn\times\mn}$.
Here, $\Phi_l^{(j)}(t):=\me K^{(j)}_t(l)$.

We are left with showing that, for all $a,b>0$, $a<b$,
\begin{equation}\label{eq:cond1}
	(
g_j(t))^{-1/2}\sup_{v\in [a,\,b]}|\Phi_l^{(j)}(S_{\lfloor tv\rfloor})-\Phi_l^{(j)}(tv)|~\overset{\mmp}{\to}~ 0,\quad t\to\infty
\end{equation}
and
\begin{equation}\label{eq:cond2}
	\lim_{t\to\infty} (
g_j(t))^{-1/2}\sup_{v\in [a,\,b]}|\Phi_l^{(j)}(tv)-\me \mathcal{K}^{(j)}_{\lfloor tv\rfloor}(l)|= 0.
\end{equation}
Here, $g_j$ is defined as in \eqref{eq:def_g} and,
for notational simplicity, we have replaced $\eee^T$ with $t$ and $\eee^u$ with~$v$.

\noindent {\sc Proof of \eqref{eq:cond1}}. Put $\eta(t):=t\wedge S_{\lfloor t\rfloor}$ for $t\geq 0$. Using the facts that $\Phi_l^{(j)}$ is a nondecreasing function and $\eta$ is a.s.\ nondecreasing, write, for $x\geq 0$,
\begin{multline*}
	\sup_{v\in[a,\,b]}\,\big|\Phi_l^{(j)}(S_{\lfloor vt\rfloor})-\Phi_l^{(j)}(vt)\big|=\sup_{v\in[a,\,b]}\,\big(\Phi_l^{(j)}(|S_{\lfloor vt\rfloor}-vt|+\eta(vt))-\Phi_l^{(j)}(\eta(vt))\big)\\
	\le \Phi_l^{(j)}(\sup_{v\in[a,\,b]}|S_{\lfloor vt\rfloor}-vt|+\eta(bt))-\Phi_l^{(j)}(\eta(at))
	=\big(\Phi_l^{(j)}(\sup_{v\in[a,\,b]}|S_{\lfloor vt\rfloor}-vt|+\eta(bt))-\Phi_l^{(j)}(\eta(at))\big)\\
	\times(\1_{\{\sup_{v\in[a,\,b]}|S_{\lfloor vt\rfloor}-vt|\leq t^{1/2}x\}}+\1_{\{\sup_{v\in[a,\,b]}|S_{\lfloor vt\rfloor}-vt|> t^{1/2}x\}})=:A_l^{(j)}(t,x)+B_l^{(j)}(t,x).
\end{multline*}

First, we prove that
\begin{equation}\label{eq:17}
	\lim_{t\to\infty}\frac{A_l^{(j)}(t,x)}{(
g_j(t))^{1/2}}=0\quad\text{a.s.}
\end{equation}
To this end, note that, a.s.,
\begin{multline*}
	A_l^{(j)}(t,x)\le\Phi_l^{(j)}(t^{1/2}x+\eta(bt))-\Phi_l^{(j)}(\eta(at))=\int_{\eta(at)}^{t^{1/2}x+\eta(bt)} \big(\Phi_l^{(j)}(y)\big)'{\rm d}y \\
	\le t^{1/2}x \sup_{\alpha\in[0,\,1]} (\Phi_l^{(j)})^\prime\big(\eta(at)+\alpha (\eta(bt)-\eta(at)+t^{1/2}x)\big).
\end{multline*}
In accordance with Proposition \ref{exact},
$$\big(\Phi_l^{(j)}(t)\big)'=\sum_{|{\rr}|=j} p_{\rr}\eee^{-p_{\rr}t}\frac{(p_{\rr}t)^{l-1}}{(l-1)!}=\frac{l}{t}\,\me K_t^{(j)*}(l)\sim \frac{c_jg_j(t)}{t}, \quad t\to\infty.$$
In view of the uniform convergence theorem for slowly varying functions (Theorem 1.2.1 in \cite{Bingham+Goldie+Teugels:1989}) and
\begin{equation*}
\lim_{t\to\infty}\frac{\eta(at)}{t}=a\quad \text{and} \quad \lim_{t\to\infty}\frac{\eta(bt)}{t}=b \quad\text{a.s.}
\end{equation*}
which follows from the strong law of large numbers for standard random walks, we obtain, for each $\alpha\in[0,\,1]$, as $t\to\infty$,
$$
(\Phi_l^{(j)})^\prime\big(\eta(at)+\alpha (\eta(bt)-\eta(at)+t^{1/2}x)\big)\sim \frac{c_jg_j\big(\eta(at)+\alpha (\eta(bt)-\eta(at)+t^{1/2}x)\big)}{\eta(at)+\alpha (\eta(bt)-\eta(at)+t^{1/2}x)}\sim\frac{c_jg_j(t)}{\big(a+\alpha(b-a)\big)t}.
$$
Hence,
$$
{\lim\sup}_{t\to\infty} \frac{A_l^{(j)}(t,x)}{(
g_j(t))^{1/2}}\le {\lim\sup}_{t\to\infty}  \frac{t^{1/2}xc_jg_j(t)}{at(
g_j(t))^{1/2}}=\frac{xc_j}{a}\lim_{t\to\infty}\Big(\frac{
g_j(t)}{t}\Big)^{1/2}=0,
$$
which completes the proof of \eqref{eq:17}.

Donsker's theorem entails $$t^{-1/2}\sup_{v\in[a,\,b]}\,|S_{\lfloor tv\rfloor}-tv|  ~{\overset{{\rm
d}}\longrightarrow}~ \sup_{v\in [a,\,b]}\,|B(v)|,\quad t\to\infty,$$ where $(B(v))_{v\geq 0}$ is a standard Brownian motion. Using this and \eqref{eq:17} we write, for any $\varepsilon>0$ and any $x>0$,
\begin{multline*}
	\mmp\big\{A_l^{(j)}(t,x)+B_l^{(j)}(t,x)>\varepsilon (g_j(t))^{1/2}\big\}\leq \mmp\{A_l^{(j)}(t,x)>2^{-1}\varepsilon (g_j(t))^{1/2}\}\\
	+\mmp\{B_l^{(j)}(t,x)>2^{-1}\varepsilon (g_j(t))^{1/2}\}\leq o(1)+\mmp\{\sup_{v\in[a,\,b]}|S_{\lfloor tv\rfloor}-tv|> t^{1/2}x\}\\\to \mmp\{\sup_{v\in[a,\,b]}|B(v)|>x\},\quad t\to\infty.
\end{multline*}
Since $A_l^{(j)}(t,x)+B_l^{(j)}(t,x)=\Phi_l^{(j)}(\sup_{v\in[a,\,b]}|S_{\lfloor vt\rfloor}-vt|+\eta(bt))-\Phi_l^{(j)}(\eta(at))$
does not depend on $x$, letting $x\to\infty$ yields  $$\frac{A_l^{(j)}(t,x)+B_l^{(j)}(t,x)}{(
g_j(t))^{1/2}}~\overset{\mmp}{\to}~ 0,\quad t\to\infty,$$ thereby proving \eqref{eq:cond1}.

\noindent {\sc Proof of \eqref{eq:cond2}}. According to Lemma \ref{determ}, for large enough $t$ and a constant $B_l$, $$\sup_{v\in[a,\,b]}\,|\Phi_l^{(j)}(vt)-\me \mathcal{K}^{(j)}_{\lfloor vt\rfloor}(l)|\le B_l.
$$
Since $\lim_{t\to\infty} g_j(t)=+\infty$,
\eqref{eq:cond2} follows. The proof of Corollary \ref{main_poiss} is complete.

\subsection{Proof of Corollary \ref{X}}

According to Theorem \ref{intZ}, for $j,l\in\mn$, the process $Z_l^{(j)}$ has a version which is a.s.\ continuous. Without loss of generality, we can and do assume that $Z_l^{(j)}$ itself is a.s.\ continuous. This ensures that, for all $m,n\in\mn$, the mapping $f: D^{m\times n}\to D^{m\times (n-1)}$ given by $$f((x_{i_1,i_2})_{(i_1, i_2)\in [m]\times [n]}):=(x_{i_1,i_2}-x_{i_1,i_2+1})_{(i_1,i_2)\in [m]\times [n-1]}$$ is a.s.\ continuous at $(Z_l^{(j)})_{(j,\,l)\in [m]\times [n]}$.
Here, for $i\in\mn$, $[i]=\{1,2,\ldots, i\}$. With this at hand, Theorem \ref{main} and Corollary \ref{main_poiss} in combination with the continuous mapping theorem entail
	$$
	\big({\bf K}_l^{*(j)}(T)\big)_{j,l\in\mn}~\Rightarrow~(Z_l^{(j)}-Z_{l+1}^{(j)})_{j,l\in\mn}\quad\text{and}\quad\big({\mathcal K}_l^{*(j)}(T)\big)_{j,l\in\mn}~\Rightarrow~(Z_l^{(j)}-Z_{l+1}^{(j)})_{j,l\in\mn},\quad T\to\infty,
	$$
	respectively, in the product $J_1$-topology on $D^{\mn\times\mn}$.

Recall that $X_l^{(j)}=Z_l^{(j)}-Z_{l+1}^{(j)}$. Since the processes $(Z_l^{(1)})_{l\in\mn}$, $(Z_l^{(2)})_{l\in\mn},\ldots$ are independent and identically distributed, so are $(X_l^{(1)})_{l\in\mn}$, $(X_l^{(2)})_{l\in\mn},\ldots$ Passing to a new version if needed we can assume that $Z_l$ (generic copy of $Z_l^{(1)}$, $Z_l^{(2)},\ldots$) admits representation \eqref{eq:repr2}. Then $X_l=Z_l-Z_{l+1}$ (generic copy of $X_l^{(1)}$, $X_l^{(2)},\ldots$) is given by \eqref{eq:repr}. Formulas \eqref{eq:covX} and \eqref{eq:crossX} are now justified by Theorem \ref{intX}.
The proof of Corollary \ref{X} is complete.

\subsection{Proof of Corollary \ref{barb+gne}}

According to Lemma \ref{deHaan} conditions \eqref{Barb} and \eqref{deHaan1} are equivalent. Hence, under \eqref{Barb}, Corollary~\ref{XX} applies. Putting in \eqref{smallcounts} $u=0$ and replacing $\eee^T$ with $T$ we conclude that relation \eqref{convBarb} holds with $Y_l=X_l(0)$. Now the moment formulas follow from \eqref{eq:covX} and \eqref{eq:crossX} with $u=v=0$. The proof of Corollary \ref{barb+gne} is complete.

\bigskip
\noindent {\bf Acknowledgement}. A. Iksanov thanks Andrey Pilipenko for useful discussions. 

\end{document}